\DeclarePairedDelimiter{\ceil}{\lceil}{\rceil}
\DeclarePairedDelimiter\floor{\lfloor}{\rfloor}
\newcommand{\lebn}
\theoremstyle{plain}
\newtheorem{prop}[equation]{Proposition}
\newtheorem{thm}[equation]{Theorem}
\newtheorem{conj}[equation]{Conjecture}
\newtheorem{cor}[equation]{Corollary}
\newtheorem{lem}[equation]{Lemma}
\newtheorem{obs}[equation]{Observation}
\theoremstyle{definition}
\newtheorem{rem}[equation]{Remark}
\numberwithin{equation}{section}
\newcommand{\D}{\Delta}
\tikzset{nodc/.style={circle,draw=blue!50,fill=pink!80,inner sep=1.6pt}}
\tikzset{nodr/.style={circle,draw=gray,fill=gray!30!white,inner sep=1.6pt}}
\tikzset{nodel/.style={circle,draw=black,inner sep=2.2pt}}
\tikzset{nodinvisible/.style={circle,draw=white,inner sep=2pt}}
\tikzset{nodpale/.style={circle,draw=gray,fill=gray,inner sep=1.6pt}}
\tikzset{nod1/.style={circle,draw=black,fill=black,inner sep=1pt}}
\tikzset{nod2/.style={circle,draw=black,fill=blue!75!black,inner sep=1.6pt}}
\tikzset{nod3/.style={circle,draw=black,fill=black,inner sep=1.8pt}}
\tikzset{noddiam/.style={diamond,draw=black,inner sep=2pt}}
\tikzset{nodw/.style={circle,draw=black,inner sep=1.8pt}}
 \def\@textbottom{\vskip \z@ \@plus 10pt}
 \let\@texttop\relax
\begin{document}

\bibliographystyle{plain}

\title[2-distance coloring of planar graphs with girth six]{An improved bound for 2-distance coloring of planar graphs with girth six}

\author{Zakir Deniz}
\thanks{The author is supported by T\" UB\. ITAK, grant no:122F250}
\address{Department of Mathematics, Duzce University, Duzce, 81620, Turkey.}
\email{zakirdeniz@duzce.edu.tr}

\keywords{Coloring, 2-distance coloring, girth, planar graph.}
\date{\today}
\thanks{}

\begin{abstract}
A vertex coloring of a graph $G$ is said to be a 2-distance coloring if any two vertices at distance at most $2$ from each other receive different colors, and the least number of colors for which $G$ admits a $2$-distance coloring is known as the $2$-distance chromatic number $\chi_2(G)$ of $G$.  When $G$ is a planar graph with girth at least $6$ and maximum degree $\Delta \geq 6$, we prove that  $\chi_2(G)\leq  \Delta +4$. This improves the best known bound for 2-distance coloring of planar graphs with girth six.
\end{abstract}
\maketitle

\section{Introduction}

All graphs in this paper are assumed to be simple, i.e., finite and undirected, with no loops or multiple edges. We refer to \cite{west} for terminology and notation not defined here. Let $G$ be a graph, we use $V(G),E(G),F(G),\D(G)$ and $g(G)$ to denote the vertex, edge and face set, the maximum degree and girth of $G$, respectively. If there is no confusion in the context, we abbreviate $\D(G),g(G)$ to $\D,g$. 
A 2-distance coloring is a vertex coloring where two vertices that are adjacent or have a common neighbour receive different colors, and the smallest number of colors for which $G$ admits a 2-distance coloring is known as the 2-distance chromatic number $\chi_2(G)$ of $G$. 

In 1977, Wegner \cite{wegner} posed the following conjecture.

\begin{conj}\label{conj:main}
For every planar graph $G$, 
\[ \chi_2(G) \leq \begin{cases} 
      7 & \text{if} \ \ \Delta=3\\
      \Delta+5 & \text{if} \ \ 4\leq \Delta\leq 7 \\
       \floor[\Big]{\frac{3\Delta}{2}}+1 & \text{if} \ \  \Delta\geq 8.
   \end{cases}
\]
\end{conj}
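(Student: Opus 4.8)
The plan is to recognize that Conjecture~\ref{conj:main} is Wegner's long-standing conjecture, which remains open in full generality; what follows is the only realistic line of attack, namely the discharging method applied to a hypothetical minimal counterexample, rather than a complete proof. A 2-distance coloring of $G$ is exactly a proper coloring of the square $G^2$, so the conjecture asserts sharp upper bounds on $\chi(G^2)$ for planar $G$. First I would fix $\Delta$ in one of the three regimes and suppose, for contradiction, that $G$ is a planar graph requiring more than the conjectured number of colors with $|V(G)|+|E(G)|$ minimum. Minimality forces $G$ to be connected and to avoid every \emph{reducible configuration}: a local structure that, once deleted or contracted, admits by induction a good coloring of the smaller graph which then extends to $G$ by a greedy/counting argument on the at most $(\text{bound}-1)$ forbidden colors around each uncolored vertex.

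The core of the argument is a discharging scheme. Assign to each vertex $v$ the charge $\deg(v)-4$ and to each face $f$ the charge $\ell(f)-4$, where $\ell(f)$ is the face length; by Euler's formula the total charge is $-8$. I would then design discharging rules that move charge from high-degree vertices and long faces toward low-degree vertices and short faces, and prove that after redistribution every vertex and face carries nonnegative charge, contradicting the negative total. The reducible configurations established in the first step are precisely what guarantee that the unavoidable local structures (small-degree vertices clustered together, short faces adjacent to one another, and so on) cannot survive, so that each discharging rule applies and each final charge is nonnegative. Note that without a girth hypothesis triangular faces have charge $3-4=-1$ and act as charge sinks, which is a key source of difficulty absent from the present paper's girth-six setting, where every face has charge $\ell(f)-4\ge 2$.

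The three regimes require genuinely different input. For $\Delta\ge 8$ the target $\floor{\tfrac{3\Delta}{2}}+1$ is forced by a sharp example (a vertex of degree $\Delta$ whose neighbourhood contains many vertices pairwise at distance at most two), and here one expects the cleanest discharging analysis because high-degree vertices carry ample charge to donate. The ranges $4\le\Delta\le 7$ with target $\Delta+5$, and especially $\Delta=3$ with target $7$, are far more delicate: the color surplus is tiny, so the reducibility arguments must be correspondingly tighter and the list of unavoidable configurations much longer and more intricate.

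The hard part---indeed the reason the conjecture is still open---is producing a set of reducible configurations that is simultaneously \emph{unavoidable} (every planar graph with the prescribed $\Delta$ contains at least one) and \emph{rich enough} for the discharging to balance to nonnegativity, with essentially no slack to spare. In the small-$\Delta$ regimes the extension step is the genuine bottleneck: an uncolored low-degree vertex may see almost the entire palette among its distance-two neighbours, so naive greedy extension fails and one is driven to global recoloring or Kempe-chain arguments whose termination is difficult to control. It is exactly this combination---tight color budgets, a vast case analysis of configurations, and no structural assumption such as large girth to eliminate short faces---that has so far resisted a complete discharging proof, which is why partial results like the present paper's, where an extra girth hypothesis buys precisely the needed slack, represent the current state of the art.
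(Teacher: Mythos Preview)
The statement you were asked to prove is not proved in the paper at all: it is Wegner's 1977 conjecture, presented as Conjecture~1.1 and explicitly described as ``still widely open.'' The paper makes no attempt to establish it in full; instead it proves the much weaker Theorem~1.2 ($\chi_2(G)\le\Delta+4$ for planar $G$ with girth $\ge 6$ and $\Delta\ge 6$), which does not even fall under the conjecture's three regimes in any tight way. So there is no ``paper's own proof'' of this statement to compare against.

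Your proposal is therefore appropriate in spirit: you correctly identify the statement as an open conjecture and sketch the standard discharging framework that underlies all known partial results, including the paper's. Your diagnosis of why the general case is hard (triangular faces as charge sinks, tight color budgets for small $\Delta$, the need for recoloring beyond greedy extension) is accurate. One minor point: the paper's own discharging for Theorem~1.2 uses charges $2d(v)-6$ and $\ell(f)-6$ (summing to $-12$) rather than your $d(v)-4$ and $\ell(f)-4$, a choice tuned to girth six so that every face starts nonnegative; this illustrates exactly the kind of slack you mention that a girth hypothesis buys.
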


The conjecture is still widely open. 
Thomassen \cite{thomassen} (independently by Hartke et al. \cite{hartke}) proved the conjecture for planar graphs with $\Delta = 3$. 
In general, there are some upper bounds for $2$-distance chromatic number of planar graphs. For instance, van den Heuvel and McGuinness \cite{van-den} showed that $\chi_2(G) \leq 2\Delta + 25$, while the bound $\chi_2(G) \leq \ceil[\big]{ \frac{5\D}{3}}+78$ was proved by Molloy and Salavatipour \cite{molloy}.
Recently, Zhu and Bu \cite{zhu18} proved that $\chi_2(G) \leq 20$ when $\D \leq 5$, and $\chi_2(G) \leq 5\Delta -7$ when $\D \geq 6$. In addition, Zhu et al. \cite{zhu22} showed that $\chi_2(G) \leq 5\Delta -9$ when $7 \leq \D \geq 8$.

For  planar graphs  without short cycles, Dong and Lin \cite{dong} proved that $\chi_2(G) \leq \Delta + 8$ if $g\geq 5$. In \cite{bu-zu}, Bu and Zhu showed that $\chi_2(G) \leq \Delta + 5$ if $G$ is a planar graph with $g \geq  6$, which confirms the Conjecture \ref{conj:main} for the planar graphs with girth six.


Our focus will be on planar graphs with $\D,g\geq 6$. In such a case, we improve the bound of Bu and Zhu \cite{bu-zu} by reducing one, and the main result of the paper is the following.




\begin{thm}\label{thm:main}
If $G$ is a planar graph with  $g\geq 6$ and $\Delta\geq 6$, then $\chi_2(G) \leq \Delta + 4$. 
\end{thm}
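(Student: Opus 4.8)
The natural approach is a discharging argument on a minimal counterexample. Suppose the theorem fails, and let $G$ be a counterexample minimizing $|V(G)|+|E(G)|$, writing $\Delta:=\Delta(G)\geq 6$. Then $G$ is connected, and every planar graph $H$ with $g(H)\geq 6$ and $|V(H)|+|E(H)|<|V(G)|+|E(G)|$ admits a $2$-distance colouring with $\Delta+4$ colours: if $\Delta(H)\geq 6$ this is minimality, and if $\Delta(H)\leq 5$ it follows from Bu and Zhu's bound $\chi_2(H)\leq \Delta(H)+5\leq \Delta+4$ \cite{bu-zu}. First I would record the cheap structural facts. We have $\delta(G)\geq 2$: a vertex of degree at most $1$ sees at most $\Delta$ vertices within distance two, so we may delete it and extend a colouring of the rest. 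Also $G$ has no cut vertex and no bridge --- colour the two sides by minimality, match the colour of the separating vertex (or of one end of the bridge), and permute the palette on one side to clear the few remaining conflicts --- so $G$ is $2$-connected, every face is bounded by a cycle, and $g\geq 6$ forces every face to have length at least $6$. Crucially, $g\geq 6$ also forbids all cycles of length $3$, $4$ and $5$, so the second neighbourhoods of non-adjacent vertices interact only in very restricted ways. Throughout, a $k$-vertex is a vertex of degree $k$, with the usual $k^+$ and $k^-$ conventions.

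The core of the proof is a catalogue of reducible configurations, each dispatched by deleting it (or a carefully chosen subset of its vertices), $2$-distance colouring the remainder by minimality, and extending the colouring one vertex at a time. The elementary ones come straight from counting: deleting a $2$-vertex $v$ with neighbours $u,w$ forbids exactly $d(u)+d(w)$ colours at $v$ (exactly, because a common neighbour of $u$ and $w$ would close a $4$-cycle), so $v$ is reducible unless $d(u)+d(w)\geq \Delta+4$; in particular no $2$-vertex has a $3^-$-neighbour, so no two $2$-vertices are adjacent and every $2$-vertex has two $4^+$-neighbours. More careful versions of the same extension argument should show that a $4$-vertex or a $5$-vertex cannot be adjacent to too many $2$-vertices, and that various small mixed-degree subgraphs cannot occur. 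The genuinely delicate cases --- and this is where the improvement from $\Delta+5$ to $\Delta+4$ must be gained --- are those in which the last vertex to be coloured has exactly $\Delta+4$ forbidden colours, so that a plain greedy extension fails; there one must order the extension cleverly, or temporarily uncolour a neighbour and repair along an alternating Kempe-type chain, or exploit that two of the forbidding vertices are forced to share a colour. Pinning down the exact minimal list of reducible configurations --- tight enough that the discharging below balances, yet each still genuinely reducible with only $\Delta+4$ colours --- is the main obstacle of the proof.

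The last step is discharging. Assign each vertex $v$ the charge $2d(v)-6$ and each face $f$ the charge $d(f)-6$; by Euler's formula $\sum_{v}\bigl(2d(v)-6\bigr)+\sum_{f}\bigl(d(f)-6\bigr)=6\bigl(|E|-|V|-|F|\bigr)=-12<0$. Every face has length at least $6$, so its charge is nonnegative, and the only vertices of negative charge are the $2$-vertices, each carrying $-2$. I would then design rules that move charge to each $2$-vertex from its two $4^+$-neighbours --- and, where necessary, from incident faces of length at least $7$ --- so that after redistribution every $2$-vertex is nonnegative while no face and no $4^+$-vertex becomes negative. The absent configurations are exactly what guarantees this: a $\Delta$-vertex has charge $2\Delta-6\geq\Delta$ and can thus afford $1$ for each of its at most $\Delta$ neighbouring $2$-vertices, while the forbidden configurations cap, appropriately, the number of neighbouring $2$-vertices of a $4$- or $5$-vertex. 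Since the total charge is invariant under the redistribution but is simultaneously negative and a sum of nonnegative terms, we obtain a contradiction, and Theorem~\ref{thm:main} follows.
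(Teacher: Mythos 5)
Your overall architecture (minimal counterexample, reducible configurations, discharging with $\mu(v)=2d(v)-6$ and $\mu(f)=\ell(f)-6$) matches the paper, but the proposal has a concrete flaw and, beyond that, explicitly defers the part of the argument where the theorem is actually won. The flaw: you claim that deleting a $2$-vertex $v$ with neighbours $u,w$ "forbids exactly $d(u)+d(w)$ colours at $v$", hence $v$ is reducible unless $d(u)+d(w)\geq\Delta+4$, hence no $2$-vertex has a $3^-$-neighbour. This ignores that a $2$-distance colouring of $G-v$ need not restrict to a partial $2$-distance colouring of $G$: the vertices $u$ and $w$ are at distance $2$ in $G$ only through $v$, so in $G-v$ they may legally receive the same colour, which is illegal in $G$ no matter how $v$ itself is coloured. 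Repairing this requires recolouring $u$ or $w$, which is only possible when that vertex has small degree-sum over its neighbourhood; this is exactly why the paper proves only that a $3$-vertex with a $2$-neighbour satisfies $D(v)\geq\Delta+5$ (Lemma \ref{lem:3-vertex}) rather than excluding such $3$-vertices outright, and why $3(1)$-vertices survive as a central difficulty throughout. The same oversight undermines your claimed $2$-connectivity reduction, which in any case the paper neither uses nor needs.

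The larger gap is that you acknowledge, but do not close, "the main obstacle": identifying reducible configurations tight enough that the discharging balances. Your proposed rules (charge to $2$-vertices from their $4^+$-neighbours, topped up from $7^+$-faces) do not balance. For example, a $4$-vertex with three $2$-neighbours and a $5$-neighbour has charge $2$ but must send $3$; all five of its incident faces may be $6$-faces, so your $7^+$-face rule gives nothing. The paper's resolution is the machinery of special, bad and semi-bad vertices: it proves that such a deficient $4$- or $5$-vertex forces all its weak-adjacent vertices to be $\Delta$-vertices (Lemmas \ref{lem:delta6-special-not-weak-adj} and \ref{lem:delta7-not-weak-delta-1}), routes the surplus of those $\Delta$-vertices into the common faces, and then proves a sequence of non-adjacency lemmas (e.g.\ Lemmas \ref{lem:bad-vertices-not-adjacent}, \ref{lem:6-face-at-most-3-bad-and-semi-bad}, \ref{lem:delta6-f-is-not-inc-to-bad-and-semibad}) guaranteeing each face's charge is shared among few recipients. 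The reducibility proofs for these configurations are themselves delicate (they require showing that the colour classes around a bad vertex are forced, as in Lemmas \ref{lem:4-vertex-color-class} and \ref{lem:5-vertex-color-class}). None of this is present or even sketched in your proposal, so as written it does not constitute a proof of the theorem.
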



Given a planar  graph $G$, 
we denote by $\ell(f)$ the length of a face $f$ and by $d(v)$ the degree of a vertex $v$. 

A $k$-vertex is a vertex of degree $k$. A $k^{-}$-vertex is a vertex of degree at most $k$, and a $k^{+}$-vertex is a vertex of degree at least $k$. A $k$ ($k^-$ or $k^+$)-face is defined similarly. A vertex $u\in N(v)$ is called $k$-neighbour (resp. $k^-$-neighbour, $k^+$-neighbour) of $v$ if $d(u)=k$ (resp. $d(u)\leq k$, $d(u)\geq k$). A $k(d)$-vertex is a $k$-vertex adjacent to $d$ $2$-vertices. 

For a vertex $v\in V(G)$, we use $n_i(v)$ to denote the number of $i$-vertices adjacent to $v$.
Let $v\in V(G)$, we define $D(v)=\Sigma_{v_i\in N(v)}d(v_i)$. For $u,v\in V(G)$, we denote by $d(u,v)$ the distance between $u$ and $v$. Also, we set $N_i(v)=\{u \ | \ u\in V(G), \ 1\leq d(u,v) \leq i \}$ for $i\geq 1$, and so $N_1(v)=N(v)$.

%



\section{The Proof of Theorem \ref{thm:main}} \label{sec:premECE}

\subsection{The Structure of Minimum Counterexample} \label{sub:premECE}~~\medskip

Let  $G$ be a minimal counterexample to Theorem \ref{thm:main} with respect to the number of vertices such that $g(G)\geq 6$, $\Delta(G)=\D\geq 6$ and $G$ does not admit any $2$-distance coloring with $\D+4$ colors. By minimality, $G-v$ has a 2-distance coloring with $\D+4$ colors for every $v\in V(G)$. Indeed, even if $\D(G-v)<6$ for some $v\in V(G)$, then $\chi_2(G-v)\leq \Delta(G-v)+5$ by \cite{bu-zu}, and so we have $\chi_2(G-v)\leq \Delta(G-v)+5 \leq \D+4$. 
Obviously,  $G$ is connected and $\delta(G)\geq 2$. \medskip



Our proof technique is similar to \cite{bu-zu}, where we detail and adapt some of their results.
We begin with outlining some structural properties that $G$ must carry, which will be  in use in the sequel.

\begin{prop}\label{prop:properties}
$G$ has no 
 adjacent 2-vertices.
\end{prop}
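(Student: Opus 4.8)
The plan is a standard reducibility argument based on the minimality of $G$. Suppose, for contradiction, that $G$ contains two adjacent $2$-vertices $u$ and $v$; let $w$ denote the second neighbour of $u$ and $x$ the second neighbour of $v$. By the minimality hypothesis recorded above, $G-u$ admits a $2$-distance coloring $c$ with $\D+4$ colors. I would then show that $c$ extends to a valid color for $u$, which contradicts the assumption that $G$ has no $2$-distance coloring with $\D+4$ colors.

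To carry this out, note that a color for $u$ is admissible as long as it differs from the colors of all vertices in $N_2(u)$. Since $u$ is a $2$-vertex with $N(u)=\{v,w\}$, every vertex of $N_2(u)$ lies in $\{v,w\}\cup\bigl(N(v)\setminus\{u\}\bigr)\cup\bigl(N(w)\setminus\{u\}\bigr)$; as $v$ is a $2$-vertex we have $N(v)\setminus\{u\}=\{x\}$, and $|N(w)\setminus\{u\}|\leq \D-1$. Hence $|N_2(u)|\leq 2+1+(\D-1)=\D+2$, so at most $\D+2$ colors are forbidden for $u$. Since $\D+4>\D+2$, at least two colors remain available, and assigning one of them to $u$ produces a $2$-distance coloring of $G$ with $\D+4$ colors — the desired contradiction.

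There is no genuinely hard step here; this is the lightest of the structural reductions and mostly a matter of bookkeeping. The only points deserving a word of care are that the estimate $|N_2(u)|\leq \D+2$ remains valid even if some of the listed vertices coincide (an overcount only makes the bound weaker, hence harmless, so the girth condition is not even needed), and that the minimality setup legitimately furnishes a $(\D+4)$-coloring of $G-u$ also in the degenerate case $\D(G-u)<6$, which is already handled in the preamble via the bound of Bu and Zhu. One could equally well delete the edge $uv$ rather than the vertex $u$ and repeat the same count; deleting $u$ is the cleaner formulation.
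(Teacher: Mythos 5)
There is a genuine gap. When you delete $u$ and then try to restore it, the obstruction is not only that $u$ itself needs a color avoiding $f(N_2(u))$: you must also check that every pair of vertices that are within distance $2$ in $G$ but \emph{not} in $G-u$ still receive distinct colors. The only such pair is $\{v,w\}$, the two neighbours of $u$, which acquire the common neighbour $u$ once it is restored. In $G-u$ the vertex $v$ has the single neighbour $x$, and since $g(G)\geq 6$ rules out $w=x$ (a triangle) and $w\sim x$ (a $4$-cycle), $v$ and $w$ are at distance at least $3$ in $G-u$; hence the coloring $f$ of $G-u$ may legitimately assign $f(v)=f(w)$, and then no choice of color for $u$ repairs the conflict between $v$ and $w$. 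Your remark that ``the girth condition is not even needed'' is a symptom of having overlooked exactly this point.

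This is precisely why the paper's proof splits into cases: if $f(w)\neq f(v)$ it proceeds as you do, and if $f(w)=f(v)$ it first recolors $v$ (which has at most $\D+1$ forbidden colors, so a color distinct from $f(w)$ exists) before assigning a color to $u$. Your counting of $|N_2(u)|\leq \D+2$ is correct and is the same bound the paper uses for the final step, so the fix is a one-line addition, but as written the proof is incomplete.
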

\begin{proof}
Assume that $G$ has a pair of adjacent 2-vertices $u,v$. Let $w$ and $z$ be the other neighbours of $u$ and $v$, respectively. By minimality,  $G-u$ has a 2-distance coloring $f$ with $\D+4$ colors. If $f(w)\neq f(v)$, we then extend the coloring $f$ of $G-u$ into the graph $G$, since $u$ has at most $\D+2$ forbidden colors. If $f(w)=f(v)$, then we recolor $v$ with a color $c\neq f(w)$, since $v$ has at most $\D+1$ forbidden colors. In such a way, we obtain a 2-distance coloring of $G$ with $\D+4$ colors after assigning an available color to $u$. 
\end{proof}

A 2-vertex $v$ having a $3$-neighbour has the following property as $D(v)\leq \D+3$. 

\begin{cor}\label{cor:2-vertex}
Let $v$ be a 2-vertex adjacent to a $3$-vertex in $G$. If $G$ has a 2-distance coloring $f$ with $\D+4$ colors such that $v$ is uncolored vertex, then $v$ always has an available color. 
\end{cor}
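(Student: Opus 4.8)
The statement to prove is Corollary~\ref{cor:2-vertex}: if $v$ is a $2$-vertex adjacent to a $3$-vertex, and $f$ is a partial $2$-distance coloring of $G$ with $\Delta+4$ colors in which only $v$ is uncolored, then $v$ has an available color.

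\medskip

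The plan is simply to count the colors forbidden at $v$ and show this count is strictly less than $\Delta+4$. The vertices that constrain the color of $v$ are exactly those in $N_2(v)$, the ball of radius $2$ about $v$. Let the two neighbours of $v$ be $u$ and $w$, and suppose $d(u)=3$ (the hypothesis). The forbidden colors are $f(u)$, $f(w)$, the colors of the other neighbours of $u$ (there are $d(u)-1 = 2$ of them), and the colors of the other neighbours of $w$ (there are $d(w)-1 \le \Delta-1$ of them). Hence the number of forbidden colors is at most
\[
2 + (d(u)-1) + (d(w)-1) \;=\; d(u)+d(w)\;=\;D(v)\;\le\;3+\Delta,
\]
using $d(u)=3$ and $d(w)\le\Delta$. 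Since $3+\Delta < \Delta+4$, at least one color remains available for $v$, and this is exactly the content of the remark preceding the corollary (``$D(v)\le\Delta+3$''). I would remark that this bound is conservative: it ignores that some of these vertices may coincide or already share colors, but the crude estimate already suffices.

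\medskip

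The only subtlety — and it is minor — is to make sure no forbidden vertex has been missed: a vertex at distance $\le 2$ from $v$ is either a neighbour of $v$ ($u$ or $w$) or lies in $N(u)\setminus\{v\}$ or $N(w)\setminus\{v\}$, so the list above is exhaustive. One should also note that $v$ itself is the unique uncolored vertex, so every vertex in $N_2(v)$ indeed carries a color under $f$; this is given by hypothesis. There is no real obstacle here — the corollary is an immediate arithmetic consequence of Proposition~\ref{prop:properties}'s style of local counting together with the degree bound $D(v)\le\Delta+3$ flagged in the text. Accordingly I would keep the proof to one or two sentences.

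\begin{proof}
Let $u,w$ be the neighbours of $v$ with $d(u)=3$. Every vertex of $N_2(v)$ other than $v$ is colored by $f$, and the colors forbidden for $v$ are precisely those appearing on $N_2(v)\setminus\{v\}$. This set consists of $u$, $w$, the two vertices of $N(u)\setminus\{v\}$, and the at most $d(w)-1\le \Delta-1$ vertices of $N(w)\setminus\{v\}$, for a total of at most $2+(d(u)-1)+(d(w)-1)=d(u)+d(w)=D(v)\le \Delta+3$ forbidden colors. Since $\Delta+3<\Delta+4$, at least one color is available for $v$.
\end{proof}
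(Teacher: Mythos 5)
Your proof is correct and matches the paper's reasoning exactly: the corollary is stated there as an immediate consequence of the observation that $D(v)=d(u)+d(w)\leq 3+\Delta$, so the at most $\Delta+3$ forbidden colors leave one available among $\Delta+4$. Nothing is missing.
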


\begin{lem}\label{lem:3-vertex}~\\ \vspace*{-1em}
\begin{itemize}
\item[$(1)$] If a 3-vertex $v$ has a 2-neighbour, then $D(v)\geq \D+5$.
\item[$(2)$] If a 3-vertex $v$ has a 2-neighbour, then $v$ has no  $3$-neighbour adjacent to a $2$-vertex.
\end{itemize}
\end{lem}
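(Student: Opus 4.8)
The plan is to prove both parts by the standard discharging-style local argument: assume the conclusion fails, produce a 2-distance coloring of a smaller graph by minimality, and then show there are enough free colors to extend it to $G$, contradicting that $G$ is a counterexample. Throughout I will use Proposition~\ref{prop:properties} (no two adjacent $2$-vertices) and Corollary~\ref{cor:2-vertex} (a $2$-vertex adjacent to a $3$-vertex can always be colored last).

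For part $(1)$, let $v$ be a $3$-vertex with neighbors $v_1,v_2,v_3$, where $d(v_1)=2$. Suppose for contradiction that $D(v)=d(v_1)+d(v_2)+d(v_3)\le \D+4$, i.e. $d(v_2)+d(v_3)\le \D+2$. Let $u$ be the other neighbor of the $2$-vertex $v_1$. I would delete $v_1$ (or possibly $v$ together with $v_1$, depending on which deletion is cleanest), take a $\D+4$-coloring $f$ of the smaller graph guaranteed by minimality, and count forbidden colors at the uncolored vertices. The key arithmetic: the number of colors forbidden at $v$ is at most $(d(v)-1) + (D(v) - d(v)) = D(v) - 1 \le \D+3$ once we account for $v_1$ being uncolored, so $v$ has a free color; and $v_1$ is a $2$-vertex adjacent to the $3$-vertex $v$, so by Corollary~\ref{cor:2-vertex} it can be colored last. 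The only subtlety is making sure the bookkeeping of which vertices are already colored is consistent — i.e. choosing the deletion so that after coloring $v$, the vertex $v_1$ still sees at most $\D+3$ colors at distance $\le 2$, which holds because $D(v_1)=d(v)+d(u)\le 3+\D$. So the contradiction follows, giving $D(v)\ge \D+5$.

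For part $(2)$, suppose a $3$-vertex $v$ has a $2$-neighbour $v_1$ and also has a $3$-neighbour $w$ which is itself adjacent to a $2$-vertex $w_1$. By part $(1)$ applied to $v$, the two non-$2$ neighbors of $v$ have degrees summing to at least $\D+3$; but one of those neighbors is $w$ with $d(w)=3$, forcing the third neighbor of $v$ to have degree at least $\D$. Similarly part $(1)$ applied to $w$ forces the third neighbor of $w$ to have degree at least $\D$. Now I would delete the $2$-vertices $v_1$ and $w_1$ (they are non-adjacent by Proposition~\ref{prop:properties} and are at distance $\ge 2$ from each other through $v,w$, so their color constraints do not badly interact), take a $\D+4$-coloring of the resulting graph, and show the two uncolored $2$-vertices can each be completed: each is adjacent to a $3$-vertex, so $D(v_1)=d(v)+d(u)$ and $D(w_1)=d(w)+d(u')$ are both at most $\D+3$, hence by Corollary~\ref{cor:2-vertex} each has an available color, and one checks the two choices can be made to avoid conflict (they conflict only if $v$ and $w$ are at distance $\le 1$, which they are — adjacent — so one must color them sequentially, using that each has at least one free color and only one forbidden color coming from the other). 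This yields a $2$-distance coloring of $G$, the contradiction.

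The main obstacle I anticipate is the careful interaction bookkeeping in part $(2)$: because $v$ and $w$ are adjacent, coloring $v_1$ last and $w_1$ last simultaneously requires that the single shared constraint (the colors of $v$ and $w$, which lie within distance $2$ of both $v_1$ and $w_1$ in a way that overlaps) does not consume the last available color at one of them. The fix is to order the extension — color $v_1$ first using Corollary~\ref{cor:2-vertex}, then observe $w_1$ still has $D(w_1)\le \D+3$ minus nothing newly forced, so it retains a free color — but one has to verify no color-list collapses, which is where I would be most careful. If a naive deletion of just the two $2$-vertices does not give enough slack, the backup is to delete $v$ (respectively $w$) as well and recolor, exactly as in the proof of Proposition~\ref{prop:properties}.
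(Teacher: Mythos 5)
Your part $(1)$ is essentially the paper's argument: delete the $2$-vertex, recolor $v$ using the fact that $v$ sees at most $D(v)-1\le\D+3$ forbidden colors once its $2$-neighbour is uncolored, and finish with Corollary \ref{cor:2-vertex}. The recoloring of $v$ is what guarantees $f(v)\neq f(u)$ for the other neighbour $u$ of the deleted $2$-vertex, which is the only obstruction to extending; you have that step, so part $(1)$ is sound.

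Part $(2)$ has a genuine gap. After deleting $v_1$ and $w_1$ and taking a $(\D+4)$-coloring of $G-\{v_1,w_1\}$, you invoke Corollary \ref{cor:2-vertex} to color them back. But the hypothesis of that corollary is that you already have a valid partial $2$-distance coloring \emph{of $G$} with only the $2$-vertex uncolored. The coloring of $G-\{v_1,w_1\}$ need not be one: $v$ and $u$ (the two neighbours of $v_1$) are at distance $2$ in $G$ through $v_1$, but may be far apart in $G-\{v_1,w_1\}$ and hence may receive the same color. If $f(v)=f(u)$, no choice of color for $v_1$ repairs this, since the conflict is between two already-colored vertices; the same applies to $w$ and $u'$. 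This is precisely the issue your part $(1)$ handles by recoloring $v$, and in part $(2)$ you instead spend your care on a non-issue --- $v_1$ and $w_1$ interfering with each other, which cannot happen since $d(v_1,w_1)=3$. The omission is fixable (recolor $v$ avoiding $f(N_2(v)\setminus\{v_1,w_1\})$, of size at most $D(v)-2\le(2+3+\D)-2=\D+3$, and similarly $w$), but you never state the fix, and your "backup" of deleting $v$ or $w$ as well is not worked out. For comparison, the paper's route for $(2)$ is shorter: delete only $u$, decolor $t$, observe that the part-$(1)$ recoloring of $v$ now goes through even though $D(v)$ may be as large as $\D+5$ (both uncolored vertices $u,t$ lie in $N_2(v)$, leaving at most $\D+3$ forbidden colors at $v$), and then color $t$ last by Corollary \ref{cor:2-vertex}. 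Your preliminary deductions that the third neighbours of $v$ and $w$ have degree $\D$ are correct but are not needed in either approach.
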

\begin{proof}
$(1)$.  Let $v$ be a 3-vertex having a 2-neighbour $u$. Assume for a contradiction that $D(v)\leq \D+4$. 
By minimality, $G-u$ has a 2-distance coloring $f$ with  $\D+4$ colors. Let $w$ be the other neighbour of $u$. If $f(w)\neq f(v)$, we then extend the coloring $f$ of $G-u$ into the graph $G$, since $u$ has at most $\D+3$ forbidden colors. If $f(w)=f(v)$, then we recolor $v$ with a color $[\D+4]-f(N_2(v))$, since $v$ has at most $\D+3$ forbidden colors. In addition, we assign an available color to $u$. Consequently, $G$ has a 2-distance coloring with  $\D+4$ colors, a contradiction. \medskip

$(2)$. Let $v$ be a 3-vertex having a 2-neighbour $u$. Assume to the contrary that $v$ has a $3$-neighbour $z$ adjacent to a $2$-vertex $t$.  Consider a 2-distance coloring $f$ of $G-u$ with  $\D+4$ colors, we first decolor $t$, and apply the above argument. Thus, $G$ has a  2-distance coloring with  $\D+4$ colors such that $t$ is uncolored vertex. By Corollary \ref{cor:2-vertex}, this coloring can be extended to the whole graph $G$, a contradiction.
\end{proof}

The following is an immediate consequence of Lemma \ref{lem:3-vertex}.

\begin{cor}\label{cor:3-vertex}~\\ \vspace*{-1em}
\begin{itemize}
\item[$(1)$] A 3-vertex $v$ is adjacent to at most one 2-vertex. 
\item[$(2)$] If a 3-vertex $v$ has a 2-neighbour, then $v$ has a $5^+$-neighbour.
\item[$(3)$] If $\D\geq 7$ and a 3-vertex $v$ has a 2-neighbour, then $v$ has either a $6^+$-neighbour or two $5$-neighbours.
\end{itemize}
\end{cor}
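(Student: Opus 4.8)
The plan is to read off all three parts directly from the degree-sum estimate $D(v)\ge \D+5$ supplied by Lemma~\ref{lem:3-vertex}(1), combined with the trivial fact that a $3$-vertex has exactly three neighbours, each of degree between $2$ and $\D$. No new coloring argument is needed; everything is a short counting argument on degrees.

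First I would handle part (1). Suppose a $3$-vertex $v$ is adjacent to two $2$-vertices. Then $v$ certainly has a $2$-neighbour, so Lemma~\ref{lem:3-vertex}(1) gives $D(v)\ge \D+5$. Writing $w$ for the third neighbour of $v$ and using $d(w)\le \D$, we also get $D(v)=2+2+d(w)\le \D+4$, a contradiction; hence $v$ has at most one $2$-neighbour. For parts (2) and (3) I would fix a $3$-vertex $v$ with a $2$-neighbour $u$ and write $x,y$ for its other two neighbours, so that Lemma~\ref{lem:3-vertex}(1) becomes $d(x)+d(y)=D(v)-d(u)\ge \D+3$. For (2): if $v$ had no $5^{+}$-neighbour then $d(x),d(y)\le 4$, whence $d(x)+d(y)\le 8<9\le \D+3$ since $\D\ge 6$, a contradiction, so $v$ has a $5^{+}$-neighbour. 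For (3): assuming $\D\ge 7$, suppose $v$ has neither a $6^{+}$-neighbour nor two $5$-neighbours; then $d(x),d(y)\le 5$ with not both equal to $5$, so one of them, say $x$, has $d(x)\le 4$, giving $d(x)+d(y)\le 4+5=9<10\le \D+3$, again a contradiction.

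There is essentially no obstacle here: once Lemma~\ref{lem:3-vertex}(1) is available, the corollary is a one-line case analysis on the degrees of the at most two non-$2$-neighbours of $v$, and only part (1) of the lemma is actually invoked. The only bookkeeping point is that part (1) of the corollary ensures those remaining neighbours are genuine $3^{+}$-vertices, which is what makes the phrasings ``a $5^{+}$-neighbour'' and ``two $5$-neighbours'' meaningful, though it plays no role in the inequalities themselves.
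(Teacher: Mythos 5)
Your proof is correct and matches the paper's intent: the paper gives no explicit argument, stating only that the corollary is an immediate consequence of Lemma~\ref{lem:3-vertex}, and your degree-sum computation from $D(v)\geq \D+5$ is exactly the counting that makes it immediate. The arithmetic in all three parts checks out, and your observation that only part (1) of the lemma is needed is accurate.
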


It is worth to mention that $G$ has no $3(k)$-vertex for $k\in \{2,3\}$ by Corollary \ref{cor:3-vertex}-(1). Therefore, if a $3$-vertex $v$ is adjacent to a $2$-vertex, then $v$ must be a $3(1)$-vertex.  This fact will be used in the rest of the paper. 

\newpage

\begin{lem}\label{lem:4-vertex}~\\ \vspace*{-1em}
\begin{itemize}
\item[$(1)$] If a 4-vertex $v$ has a 2-neighbour, then $D(v)\geq \D+4$.
\item[$(2)$] If a 4-vertex $v$ has a 2-neighbour and a $3(1)$-neighbour, then $D(v)\geq \D+5$.
\end{itemize}
\end{lem}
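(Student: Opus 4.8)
The plan is to argue each part by contradiction using the usual minimality-plus-recolouring scheme: delete the $2$-neighbour $u$ of $v$, colour $G-u$ by minimality, and then extend (after a bounded amount of recolouring) to a $2$-distance colouring of $G$. In part $(1)$ suppose $D(v)\le\D+3$; write the neighbours of $v$ as $u,v_1,v_2,v_3$ with $u$ the $2$-neighbour, so $d(v_1)+d(v_2)+d(v_3)\le\D+1$, and let $w$ be the second neighbour of $u$. Since $g\ge6$ there are no triangles or $4$-cycles through $u$, so $N(w)\setminus\{u\}$ is disjoint from $\{v,v_1,v_2,v_3\}$ and hence $N_2(u)=\{v,w\}\sqcup\{v_1,v_2,v_3\}\sqcup(N(w)\setminus\{u\})$ has exactly $d(w)+4\le\D+4$ elements; similarly $|N_2(v)|\le 2+\sum_i d(v_i)\le\D+3$. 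Take a $2$-distance colouring $f$ of $G-u$ with $\D+4$ colours. If $|f(N_2(u))|\le\D+3$ we extend $f$ to $u$, a contradiction, so assume $|f(N_2(u))|=\D+4$; then $d(w)=\D$ and the $\D+4$ vertices of $N_2(u)$ carry pairwise distinct colours.

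In that situation I would recolour $v$ to break the rainbow. Since $u\in N_2(v)$ is uncolored in $G-u$, the vertex $v$ sees at most $\D+2$ colours on $N_2(v)$, so it has at least two available colours, one of which, say $c$, differs from its current colour (its current colour being available as $f$ is proper). Because every colour occurs on $N_2(u)$ and the colours there are distinct, $c$ occurs on a unique vertex of $N_2(u)$, which is not $v$; recolouring $v$ with $c$ keeps $f$ proper (it still avoids every colour on $N_2(v)$) and removes the former colour of $v$ from $f(N_2(u))$, so now $|f(N_2(u))|\le\D+3$ and $u$ can be coloured, a contradiction. This proves $(1)$.

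For part $(2)$ suppose $D(v)\le\D+4$; write the neighbours of $v$ as $u,z,v_1,v_2$ with $u$ the $2$-neighbour and $z$ the $3(1)$-neighbour, so $d(v_1)+d(v_2)\le\D-1$, let $w$ be the second neighbour of $u$, and let $t$ be the unique $2$-neighbour of $z$ (which exists since $z$ is a $3(1)$-vertex). Note $t\in N_2(v)$, $t\notin N_2(u)$, and $g\ge6$ keeps $t\ne w$ and $t\notin\{v,v_1,v_2,z\}$; exactly as before $|N_2(u)|=d(w)+4\le\D+4$ and $|N_2(v)|\le5+d(v_1)+d(v_2)\le\D+4$. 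Take a $2$-distance colouring $f$ of $G-u$ with $\D+4$ colours and additionally decolour $t$; then $u$ and $t$ are the only uncolored vertices and both lie in $N_2(v)$, so $v$ sees at most $\D+2$ colours on $N_2(v)$ and again has at least two available colours. Now run the argument of part $(1)$ verbatim: either $|f(N_2(u))|\le\D+3$ and we colour $u$ directly, or $N_2(u)$ is rainbow with $d(w)=\D$ and recolouring $v$ with a suitable colour $c$ (distinct from $f(v)$ and occurring on $N_2(u)\setminus\{v\}$) brings $|f(N_2(u))|$ down to $\D+3$ so that $u$ can be coloured. Either way we obtain a $2$-distance colouring of $G$ in which the only uncolored vertex is $t$; since $t$ is a $2$-vertex adjacent to the $3$-vertex $z$, Corollary \ref{cor:2-vertex} supplies a colour for $t$, a contradiction.

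The main obstacle is exactly the ``rainbow'' configuration $d(w)=\D$ with all of $N_2(u)$ coloured distinctly, where $u$ cannot be coloured directly; the way out is the colour swap at $v$, and the reason for first deleting $u$ — and, in part $(2)$, also decolouring $t$ — is precisely to guarantee that $v$ then has the two available colours needed for that swap. The rest is routine bookkeeping: checking via $g\ge6$ that $N_2(u)$ really splits into $d(w)+4$ distinct vertices, that $t$ is not accidentally one of $w,v_1,v_2,v,z$ in part $(2)$, and that recolouring $v$ disturbs no other $2$-distance constraint.
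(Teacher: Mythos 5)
Your proof is correct and takes essentially the same approach as the paper: delete the $2$-neighbour $u$, exploit $D(v)\le\D+3$ (resp.\ $\le\D+4$) to recolour $v$ and thereby free a colour for $u$, and in part $(2)$ additionally uncolour $t$ and finish with Corollary~\ref{cor:2-vertex}. The only (cosmetic) difference is in the order of operations: the paper decolours $v$ outright, colours $u$ first and then recolours $v$, which avoids your case analysis on whether $N_2(u)$ is rainbow.
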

\begin{proof}
$(1)$.  Let $v$ be a 4-vertex having a 2-neighbour $u$. Assume for a contradiction that $D(v)\leq \D+3$. 
By minimality, $G-u$ has a 2-distance coloring $f$ with  $\D+4$ colors. Let us first decolor the vertex $v$. Next, we assign an available color to $u$, as it has at most $\D+3$ forbidden colors. Finally,  we give an available color to $v$ due to $D(v)\leq \D+3$. This provides a 2-distance coloring of $G$ with  $\D+4$ colors, a contradiction. \medskip

$(2)$. Let $v$ be a 4-vertex having a 2-neighbour $u$. Suppose that $v$ has a $3(1)$-neighbour $z$. Let $t$ be the $2$-neighbour of $z$. Assume to the contrary that $D(v)\leq \D+4$. Consider a 2-distance coloring $f$ of $G-u$ with  $\D+4$ colors. We first decolor $t$, and apply the above argument. Thus, $G$ has a  2-distance coloring with  $\D+4$ colors so that $t$ is an uncolored vertex. By Corollary \ref{cor:2-vertex}, this coloring can be extended to the whole graph $G$, a contradiction.
\end{proof}

The following can be easily obtained from Lemma \ref{lem:4-vertex}. 
\begin{cor}\label{cor:4-vertex}~\\ \vspace*{-1em}
\begin{itemize}
\item[$(1)$] A 4-vertex $v$ is adjacent to at most three 2-vertices.
\item[$(2)$] If  a 4-vertex $v$ is adjacent to two 2-vertices and one $3(1)$-vertex, then $v$ has a $4^+$-neighbour. 
\item[$(3)$] If  a 4-vertex $v$ is adjacent to three 2-vertices, then $v$ has a $4^+$-neighbour. 
\item[$(4)$] If $\D\geq 7$ and a 4-vertex $v$ is adjacent to two 2-vertices, then $v$ has a $4^+$-neighbour. 
\item[$(5)$] If $\D\geq 7$ and a 4-vertex $v$ is adjacent to three 2-vertices, then $v$ has a $5^+$-neighbour. 
\end{itemize}
\end{cor}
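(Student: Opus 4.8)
The plan is to obtain every item as a short consequence of Lemma~\ref{lem:4-vertex} together with the standing assumption $\D\geq 6$ (and $\D\geq 7$ for items (4) and (5)), arguing by contradiction in each case. The common mechanism is: negate the conclusion, read off from the negation the four neighbours of $v$ together with an upper bound on each of their degrees, sum these to bound $D(v)$ from above, and then contradict the lower bound on $D(v)$ supplied by Lemma~\ref{lem:4-vertex}. Recall that $\delta(G)\geq 2$, so every neighbour of $v$ that is not a $4^+$-vertex has degree $2$ or $3$.

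Concretely, for (1): if $v$ were adjacent to four $2$-vertices then $D(v)=8$, whereas Lemma~\ref{lem:4-vertex}(1) gives $D(v)\geq\D+4\geq 10$. For (2): if a $4$-vertex $v$ with two $2$-neighbours and a $3(1)$-neighbour had no $4^+$-neighbour, then its fourth neighbour has degree at most $3$, so $D(v)\leq 2+2+3+3=10$, contradicting Lemma~\ref{lem:4-vertex}(2), which gives $D(v)\geq\D+5\geq 11$. For (3): if $v$ had three $2$-neighbours and no $4^+$-neighbour, then $D(v)\leq 2+2+2+3=9<\D+4$, contradicting Lemma~\ref{lem:4-vertex}(1). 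Items (4) and (5) are the same computation run with $\D\geq 7$: a $4$-vertex with two $2$-neighbours and no $4^+$-neighbour has $D(v)\leq 2+2+3+3=10<\D+4$, and a $4$-vertex with three $2$-neighbours and no $5^+$-neighbour has $D(v)\leq 2+2+2+4=10<\D+4$; in both cases Lemma~\ref{lem:4-vertex}(1) is violated.

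Since each case reduces to a one-line arithmetic comparison, I do not expect a genuine obstacle here. The only points to keep straight are which item of Lemma~\ref{lem:4-vertex} is invoked (part (2) precisely when a $3(1)$-neighbour is present, part (1) otherwise) and the fact that items (4) and (5) really do need $\D\geq 7$: for $\D=6$ a $4$-vertex with two $2$-neighbours and two $3$-neighbours, or with three $2$-neighbours and one $4$-neighbour, has $D(v)=10=\D+4$ and so escapes the argument.
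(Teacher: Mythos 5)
Your proposal is correct and matches the paper's intent exactly: the paper states that Corollary~\ref{cor:4-vertex} "can be easily obtained from Lemma~\ref{lem:4-vertex}" without writing out the details, and your degree-sum contradictions (invoking part~(2) of the lemma precisely when a $3(1)$-neighbour is present, part~(1) otherwise) are the intended one-line arguments. The arithmetic in all five cases, including the observation that $\D\geq 7$ is genuinely needed for items~(4) and~(5), checks out.
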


\begin{prop}\label{prop:v-is-not-adj-to-three-3(1)}
If a 4-vertex $v$ has a 2-neighbour, then $v$ is not adjacent to three $3(1)$-vertices.
\end{prop}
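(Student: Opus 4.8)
The plan is to argue by contradiction. Suppose a $4$-vertex $v$ has a $2$-neighbour $u$ and three $3(1)$-neighbours $z_1,z_2,z_3$; since $d(v)=4$ these are all four neighbours of $v$, so $D(v)=2+3\cdot 3=11$. Because $v$ has a $2$-neighbour and a $3(1)$-neighbour, Lemma~\ref{lem:4-vertex}-(2) gives $D(v)\ge\D+5$, hence $\D\le 6$, and with $\D\ge 6$ this forces $\D=6$ --- so the whole proposition lives in the case $\D=6$, where the crude degree bound of Lemma~\ref{lem:4-vertex} is exactly tight and a finer colouring argument is needed. I would name the remaining local vertices: $u'$ the other neighbour of $u$; $t_i$ the unique $2$-neighbour of $z_i$ (so $z_i$ also has a third neighbour $w_i$); and $s_i$ the other neighbour of $t_i$. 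A short girth check ($g\ge 6$) shows that $u,t_1,t_2,t_3$ are four distinct vertices of $N_2(v)\setminus\{v\}$, that each $t_i$ lies at distance exactly $2$ from $v$, and that $d(u,t_i)=3$; the vertices $z_1,z_2,z_3,w_1,w_2,w_3,u'$ are likewise pairwise distinct. (Any coincidence forces a cycle of length $3$, $4$ or $5$.)

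Next I would delete $u$, take a $2$-distance colouring $f$ of $G-u$ with $\D+4=10$ colours (minimality), and uncolour $v,t_1,t_2,t_3$. Using the elementary bound $|N_2(x)\setminus\{x\}|\le D(x)$: the set $N_2(v)\setminus\{v\}$ contains the four uncoloured vertices $u,t_1,t_2,t_3$, so $v$ now sees at most $D(v)-4=7$ colours and has a set $S_v$ of at least $3$ available colours; here $S_v$ is exactly the set of colours avoided by $z_1,z_2,z_3,w_1,w_2,w_3,u'$. Each $t_i$, once $v$ is recoloured, sees at most $D(t_i)=3+d(s_i)\le 9$ colours and so is recolourable, and the $t_i$ need no mutual care since $d(t_i,t_j)\ge 3$ (or else $t_j\in N_2(t_i)$, already counted in the $\le 9$). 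The only genuinely delicate point is recolouring $u$ at the very end: its forbidden colours form $f\big(\{v,u',z_1,z_2,z_3\}\cup(N(u')\setminus\{u\})\big)$, which a priori has $2+3+(\D-1)=10$ elements; if $d(u')\le\D-1$ this is $\le 9$ and we are done, so assume $d(u')=\D=6$.

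To absorb this off-by-one I would choose $f(v)\in S_v$ according to whether $f(N(u')\setminus\{u\})$ meets $S_v$. If it does, pick $f(v)$ in that intersection; then $f(v)$ is already among the colours on $N(u')\setminus\{u\}$, so $u$ has at most $1+3+(\D-1)=9$ forbidden colours. If it does not, then $f(N(u')\setminus\{u\})\subseteq[\D+4]\setminus S_v=f(\{z_1,z_2,z_3,w_1,w_2,w_3,u'\})$, and since $u'$ is adjacent to every vertex of $N(u')\setminus\{u\}$ in fact $f(N(u')\setminus\{u\})\subseteq f(\{z_1,z_2,z_3,w_1,w_2,w_3\})$; taking any $f(v)\in S_v$, the forbidden set for $u$ then lies in $\{f(v),f(u'),f(z_1),f(z_2),f(z_3),f(w_1),f(w_2),f(w_3)\}$, of size $\le 8$. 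In either case one recolours $v$, then $t_1,t_2,t_3$, then $u$, extending $f$ to a $2$-distance colouring of $G$ with $\D+4$ colours, the desired contradiction. The main obstacle is precisely this last case split: everything else is the routine "delete a $2$-vertex, uncolour its local neighbourhood, recolour greedily", but the greedy count for $u$ fails by exactly one and must be rescued either by forcing the colour of $v$ to duplicate a colour already present on $N(u')$, or --- when that is impossible --- by showing $N(u')$ is then confined to the small palette $\{f(z_i),f(w_i)\}$.
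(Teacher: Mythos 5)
Your argument is correct and is essentially the paper's proof: delete $u$, uncolour $v$ and the three vertices $t_i$, recolour greedily, and finish the $t_i$ via Corollary~\ref{cor:2-vertex}. The only real difference is the recolouring order --- the paper colours $u$ \emph{first}, while $v$ is still uncoloured, so that $u$ sees at most $\D+3$ colours and $v$ then sees at most $8$, which makes your entire final case split (rescuing the count for $u$ when it is coloured last) unnecessary.
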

\begin{proof}
Let $v$ be a 4-vertex having a 2-neighbour $u$. Assume for a contradiction that $v$ is adjacent to three $3(1)$-vertices. Let $t_1,t_2,t_3$ be 2-neighbours of those $3(1)$-vertices. Consider a 2-distance coloring $f$ of $G-u$ with  $\D+4$ colors. We first decolor $v$ and all $t_i$'s.  Next, we give an available color to $u$, as it has at most  $\D+3$ forbidden colors. Notice that $v$ has at most $8$ forbidden colors. Therefore, $G$ has a  2-distance coloring with  $\D+4$ colors so that $t_1,t_2,t_3$ are uncolored vertices. By Corollary \ref{cor:2-vertex}, this coloring can be extended to the whole graph $G$, a contradiction.
\end{proof}

For a path $uvw$ with $d(v)=2$, we say that $u$ and $v$ are \emph{weak-adjacent}. 
The neighbours of a $3(1)$-vertex different from a $2$-vertex are called \emph{star-adjacent}. \medskip

We call a  vertex $v$ a \emph{special vertex} if it satisfies one of the followings:
\begin{itemize}
\item $v$ is a $4(2)$-vertex and adjacent to a $3(1)$-vertex and a $4$-vertex (or a $5$-vertex). (Type I)
\item $v$ is a $4(3)$-vertex and adjacent to a $4$-vertex (or a $5$-vertex). (Type II)
\item $v$ is a $5(4)$-vertex and adjacent to a $3(1)$-vertex. (Type III)
\item $v$ is a $5(5)$-vertex. (Type IV)
\end{itemize}

\begin{lem}\label{lem:delta6-special-not-weak-adj}~\\ \vspace*{-1em}
\begin{itemize}
\item[$(1)$] If $\D=6$ and $v$ is a special vertex of type I or II, then $v$ is not weak-adjacent to any $5^-$-vertex.
\item[$(2)$] If $\D=6$ and $v$ is a special vertex of type III or IV, then $v$ is not weak-adjacent to any $4^-$-vertex.
\end{itemize}
\end{lem}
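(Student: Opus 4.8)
The plan is a one-vertex reduction, in the same spirit as the earlier lemmas of this section. Suppose for contradiction that $\D=6$, that $v$ is a special vertex, and that $v$ is weak-adjacent to a vertex $x$ which is a $5^-$-vertex (if $v$ has type I or II) or a $4^-$-vertex (if $v$ has type III or IV). Let $t$ be a $2$-vertex with $N(t)=\{v,x\}$ realizing this weak-adjacency; note that $t$ is then automatically one of the $2$-neighbours counted in the label "$4(2)$", "$4(3)$", "$5(4)$" or "$5(5)$" of $v$. By minimality of $G$, the graph $G-t$ admits a $2$-distance colouring $f$ with $\D+4=10$ colours (here $\D(G-t)=\D$, since neither $v$ nor $x$ is a $\D$-vertex, so no appeal to Bu--Zhu is even needed). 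It therefore suffices to extend $f$ to $t$.

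Next I would bound the number of colours forbidden at $t$. Because $g\ge 6$, the graph $G$ has neither a triangle nor a $4$-cycle, so $x\notin N(v)$ and $\bigl(N(v)\setminus\{t\}\bigr)\cap\bigl(N(x)\setminus\{t\}\bigr)=\emptyset$; hence $N_2(t)$ is the disjoint union of $\{v,x\}$, $N(v)\setminus\{t\}$ and $N(x)\setminus\{t\}$, which gives $|N_2(t)|=d(v)+d(x)$. Now I plug in the degrees coming from the definition of special vertex: for types I and II one has $d(v)=4$ and $d(x)\le 5$, while for types III and IV one has $d(v)=5$ and $d(x)\le 4$; in every case $|N_2(t)|\le 9<10=\D+4$. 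Consequently some colour in $[\D+4]\setminus f(N_2(t))$ is available for $t$, and assigning it to $t$ yields a $2$-distance colouring of $G$ with $\D+4$ colours, contradicting the choice of $G$. The same computation disposes of all four types, so there is no real case analysis.

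The argument is short, and I do not expect a serious obstacle: the only genuinely load-bearing point is the use of $g\ge 6$ to guarantee that $N_2(t)$ contains no repeated vertices, so that the crude estimate $|N_2(t)|\le d(v)+d(x)$ is valid. It is also worth recording why the thresholds in the statement are exactly "$5^-$" for types I, II and "$4^-$" for types III, IV, and why $\D=6$ is imposed: if $x$ were a $6$-vertex in the first situation, or a $5^+$-vertex in the second, then $d(v)+d(x)$ could equal $10=\D+4$ and the single deletion would no longer close the argument — one would then have to decolour and re-colour $v$, and possibly its remaining $2$-neighbours, as well. Since the lemma only asserts the stated thresholds, none of this is required here.
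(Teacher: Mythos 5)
Your reduction has a genuine gap at the sentence ``It therefore suffices to extend $f$ to $t$.'' A $2$-distance colouring of $G-t$ is only required to separate vertices that are at distance at most $2$ \emph{in $G-t$}. The two neighbours $v$ and $x$ of the deleted $2$-vertex $t$ are at distance $2$ in $G$ (through $t$), but in $G-t$ they are at distance at least $3$: precisely because $g\geq 6$ there is no edge $vx$ and no second common neighbour. So the colouring $f$ of $G-t$ may perfectly well have $f(v)=f(x)$, and in that case no choice of colour for $t$ turns $f$ into a $2$-distance colouring of $G$ --- the conflict is between $v$ and $x$ themselves, not at $t$. Your count $|N_2(t)|=d(v)+d(x)\leq 9$ is correct and is indeed what the paper uses to colour the deleted vertex at the very end, but it only addresses the easy half of the problem. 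Note that every earlier reduction in the paper (Proposition \ref{prop:properties}, Lemma \ref{lem:3-vertex}) explicitly splits into the cases $f(w)\neq f(v)$ and $f(w)=f(v)$ for exactly this reason.

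Repairing the gap is the actual content of the lemma, and it is where the type hypotheses and $\D=6$ earn their keep. When $f(v)=f(x)$ one must recolour $v$, and for that one needs $|f(N_2(v))|\leq 9$. For a type IV vertex $|N_2(v)|=10$ and only the deleted $2$-vertex is uncoloured, so the bound is tight but holds; for types I, II and III the raw count of $N_2(v)$ is $11$ or $12$, so one must first decolour additional vertices --- another $2$-neighbour $u_2$ of $v$, and the pendant $2$-vertex $t$ behind the $3(1)$-neighbour in types I and III --- and then recolour all of these in a suitable order, using Corollary \ref{cor:2-vertex} for the last one. Once $v$ is recoloured its new colour automatically differs from $f(x)$, since $x$ remains a coloured vertex of $N_2(v)$, and only then does your count for $t$ finish the argument. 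Your closing remark that one ``would then have to decolour and re-colour $v$'' in a hypothetical tighter situation is in fact a description of what is already required in the situation at hand.
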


\begin{proof}
$(1)$  Let $v$ be a special vertex of type I or II.  Denote by $t$,  the 2-vertex adjacent to the $3(1)$-neighbour of $v$ when $v$ is of type I.  Assume for a contradiction that $v$ is weak-adjacent to a $5^-$-vertex $z$. Let $u_1$ be the common neighbour of $v$ and $z$. By minimality, $G-u_1$ has a 2-distance coloring $f$ with  $\D+4=10$ colors.  Let $u_2$ be another 2-neighbour of $v$. We first decolor $t$ (if $v$ is of type I) and $u_2$, and we later recolor $v$ with an available color, since it has at most $9$ forbidden colors. Next, we recolor $u_2$ with an available color, since it has at most $9$ forbidden colors.  We also give an available color to $u_1$, since it has at most $9$ forbidden colors. Finally, we give an available color to $t$ by Corollary \ref{cor:2-vertex}, if $v$ is of type I. Thus, we get a 2-distance coloring of $G$ with $10$ colors, a contradiction. \medskip



$(2)$  Let $v$ be a special vertex of type III or IV. Denote by $t$,  the 2-vertex adjacent to the $3(1)$-neighbour of $v$  when $v$ is of type III.  Assume for a contradiction that $v$ is weak-adjacent to a $4^-$-vertex $z$. Let $u$ be the common neighbour of $v$ and $z$. By minimality, $G-u$ has a 2-distance coloring $f$ with  $\D+4=10$ colors. We first decolor $t$ if $v$ is of type I, and we give an available color to $u$ if $f(z)\neq f(v)$. Otherwise, we recolor $v$ with an available color, since it has at most $9$ forbidden colors, and so we assign an available color to $u$. Finally, we recolor $t$ with an available color by Corollary \ref{cor:2-vertex}, if $v$ is of type I.  Therefore, $G$ has a 2-distance coloring with  $\D+4=10$ colors, a contradiction. 
%
%
%
%
\end{proof}

%
%
%
%
%

\begin{lem}\label{lem:delta7-not-weak-delta-1}
If $\D\geq 7$ and $v$ is a special vertex, then $v$ is not weak-adjacent to any $(\D-1)^-$-vertex.
\end{lem}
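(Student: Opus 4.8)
The plan is to argue by contradiction, following the template of Lemma~\ref{lem:delta6-special-not-weak-adj} but exploiting the extra slack afforded by $\D\geq 7$. Suppose $v$ is a special vertex that is weak-adjacent to a $(\D-1)^-$-vertex $z$, and let $u$ be the common neighbour of $v$ and $z$; thus $u$ is a $2$-vertex with $N(u)=\{v,z\}$. By minimality, $G-u$ has a $2$-distance colouring $f$ with $\D+4$ colours. Everything rests on two union-bound estimates: $|N_2(u)|\leq d(v)+d(z)$, and $|N_2(w)|\leq D(w)$ for every vertex $w$. The idea is to decolour a small cluster of vertices around $v$, re-extend $f$ greedily in a carefully chosen order, and so obtain a $2$-distance colouring of $G$ with $\D+4$ colours, contradicting the choice of $G$.

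If $v$ is of type I or II, then $d(v)=4$, so $|N_2(u)|\leq 4+(\D-1)=\D+3$ and $u$ has at most $\D+3$ forbidden colours; hence $f$ extends directly to $u$, a contradiction (this uses nothing about $v$ beyond $d(v)=4$). If $v$ is of type III, then $v$ is a $5(4)$-vertex with a $3(1)$-neighbour $y$; let $t$ be the $2$-neighbour of $y$. Since $g\geq 6$ rules out short cycles, $t\notin\{u,z\}$ and $t\in N_2(v)$. I would decolour $v$ and $t$, then: (i) colour $u$, which now sees at most $|N_2(u)|-1\leq d(v)+d(z)-1\leq\D+3$ coloured vertices within distance two, so a colour is available; (ii) recolour $v$, which sees at most $D(v)-1=10$ coloured vertices within distance two (here $D(v)=4\cdot 2+3=11$ and the decoloured $t$ lies in $N_2(v)$), and $10\leq\D+3$ since $\D\geq 7$; (iii) recolour $t$ by Corollary~\ref{cor:2-vertex}, which applies because after (i)--(ii) the partial colouring is a $2$-distance colouring of $G-t$ and $t$ is a $2$-vertex adjacent to the $3$-vertex $y$. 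If $v$ is of type IV, then $v$ is a $5(5)$-vertex with $D(v)=10$; here it suffices to decolour $v$, colour $u$ (again at most $\D+3$ coloured vertices within distance two), and recolour $v$ (at most $D(v)=10\leq\D+3$ forbidden colours). In every case one reaches a $2$-distance colouring of $G$ with $\D+4$ colours, the desired contradiction.

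I do not expect a genuine obstacle here. The hypothesis $d(z)\leq\D-1$ is exactly what turns $|N_2(u)|\leq d(v)+d(z)$ into the bound $\D+3$ once $v$ has been decoloured, and the hypothesis $\D\geq 7$ is precisely what makes $D(v)\leq\D+3$ in the two $5$-vertex cases after one further $2$-vertex is freed. The only points needing care are (a) checking that each intermediate partial colouring really is a $2$-distance colouring, so that the greedy extensions and the appeal to Corollary~\ref{cor:2-vertex} are legitimate, and (b) confirming the incidences $t\neq u$, $t\neq z$ and $t\in N_2(v)$, all of which follow routinely from $g\geq 6$.
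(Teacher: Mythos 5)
Your treatment of types III and IV is correct and matches the paper's argument step for step: decolour $v$ (and $t$ in type III), colour $u$ using $|N_2(u)|-1\leq d(v)+d(z)-1\leq \D+3$, recolour $v$ using the bound $D(v)-1\leq 10 <\D+4$ (which automatically separates $v$ from $z$, since $z\in N_2(v)$), and finish with Corollary~\ref{cor:2-vertex}. The gap is in types I and II. When you reinsert $u$, the vertices $v$ and $z$ become a distance-$2$ pair of $G$ (through $u$), whereas in $G-u$ they are at distance at least $4$: any $v$--$z$ path of length $k\leq 3$ avoiding $u$ would close a cycle of length $k+2\leq 5$ with the path $vuz$, violating $g\geq 6$. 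Hence the colouring $f$ of $G-u$ may perfectly well have $f(v)=f(z)$, and in that case no choice of colour for $u$ yields a $2$-distance colouring of $G$: the violation sits between the two already-coloured vertices $v$ and $z$, not at $u$. Your claim that ``$f$ extends directly to $u$ \dots this uses nothing about $v$ beyond $d(v)=4$'' is therefore unjustified; the bound $|N_2(u)|\leq \D+3$ only guarantees a colour for $u$ itself.

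The repair is exactly the extra work the paper does in this case: if $f(v)=f(z)$, first recolour $v$ away from $f(N_2(v))$. For type II this is immediate, since $D(v)\leq 2+2+2+5=11$ and, with $u$ uncoloured, $v$ has at most $10\leq \D+3$ forbidden colours. For type I, however, $D(v)\leq 2+2+3+5=12$, so with only $u$ uncoloured $v$ could see $11=\D+4$ colours when $\D=7$; one must additionally decolour the $2$-neighbour $t$ of the $3(1)$-neighbour of $v$ (note $t\in N_2(v)$ and $t\neq u$ by $g\geq 6$), then recolour $v$, colour $u$, and restore $t$ via Corollary~\ref{cor:2-vertex} --- precisely the device you already deploy for type III. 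With that insertion your argument coincides with the paper's proof.
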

\begin{proof}
Let $\D\geq 7$, and let $v$ be a special vertex. By contradiction, assume that $v$ is weak-adjacent to a $(\D-1)^-$-vertex $z$. Denote by $u$, the common neighbour of $v$ and $z$. By minimality, $G-u$ has a 2-distance coloring $f$ with  $\D+4$ colors.  We distinguish the special vertex $v$ into four possible cases as follows.

Let $v$ be of type I or II. Denote by $t$, the $2$-vertex adjacent to the $3(1)$-neighbour of $v$ when $v$ is of type I.  We first decolor $t$.  If $f(z)\neq f(v)$, we then extend the coloring $f$ of $G-u$ into the graph $G$, since $u$ has at most $\D+3$ forbidden colors. If $f(z)=f(v)$, then we recolor $v$ with a color $[\D+4]-f(N_2(v))$, since $v$ has at most $\D+3\geq 10$ forbidden colors. It then follows that $u$ has an available color. Finally we give an  available color to $t$ by Corollary \ref{cor:2-vertex} when $v$ is of type I. Consequently, $G$ has a 2-distance coloring with  $\D+4$ colors, a contradiction.

Let $v$ be of type III. Denote by $t$, the $2$-vertex adjacent to the $3(1)$-neighbour of $v$. We first decolor $t$ and $v$. Next, we give an available color to $u$, since it has a most $\D+3$ forbidden colors.
Now, $v$ has an available color. Finally we give an  available color to $t$ by Corollary \ref{cor:2-vertex}. This provides a 2-distance coloring of $G$ with  $\D+4$ colors, a contradiction.

Let $v$ be of type IV. We first decolor $v$, and then we give an available color to $u$, since it has a most $\D+3$ forbidden colors. Now, $v$ has an available color due to $D(v)=10<\Delta+4$. This provides a 2-distance coloring of $G$ with  $\D+4$ colors, a contradiction.
\end{proof}


\begin{prop}\label{prop:type2-vertex5}
If $\D\geq 7$ and $v$ is a special vertex of type II  having a $5(4)$-neighbour $u$, then $u$ is not weak-adjacent to any $5^-$-vertex.
\end{prop}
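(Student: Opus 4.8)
The plan is to delete the $2$-vertex witnessing the weak-adjacency and to extend a coloring of the smaller graph after first decoloring $u$ and $v$; the point is that $u$ has $D(u)=12$, which for $\Delta=7$ exceeds $\Delta+4$ by one, so after deleting $u_1$ one still needs to free up one more vertex in $N_2(u)$, and decoloring $v$ does exactly that.

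First I set up notation. As $v$ is a $4(3)$-vertex and $u$ is a $5$-vertex (hence not a $2$-vertex), $u$ is the unique non-$2$-neighbour of $v$; write $N(v)=\{u,w_1,w_2,w_3\}$ with $d(w_i)=2$. Similarly, as $u$ is a $5(4)$-vertex and $v$ is not a $2$-vertex, $v$ is the unique non-$2$-neighbour of $u$; write $N(u)=\{v,u_1,u_2,u_3,u_4\}$ with $d(u_j)=2$. Suppose for contradiction that $u$ is weak-adjacent to a $5^-$-vertex $z$, and, after relabeling, let $N(u_1)=\{u,z\}$ with $d(z)\le 5$. Since $z=v$ would create the triangle $u\,u_1\,v$, we have $z\neq v$; hence $u_1\notin N(v)$ and so $u_1\in N_2(v)$.

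By minimality, $G-u_1$ has a $2$-distance coloring $f$ with $\Delta+4$ colors, and I claim it extends to $G$, contradicting the choice of $G$. Decolor $u$ and $v$, and then color, in the order $u$, then $v$, then $u_1$, each vertex with a color avoiding all colors currently present on its second neighbourhood $N_2(\cdot)$ taken in $G$. Each step succeeds: using the elementary bound $|N_2(x)|\le D(x)$, for $u$ one has $D(u)=d(v)+\sum_j d(u_j)=4+8=12$ while the two distinct vertices $v,u_1\in N_2(u)$ are still uncolored, so at most $10$ colors are forbidden; for $v$ one has $D(v)=d(u)+\sum_i d(w_i)=5+6=11$ while $u_1\in N_2(v)$ is still uncolored, so at most $10$ colors are forbidden; for $u_1$ one has $D(u_1)=d(u)+d(z)=5+d(z)\le 10$, so at most $10$ colors are forbidden. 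Since $\Delta\ge 7$ gives $\Delta+4\ge 11>10$, each choice is available. Finally, the only pairs of vertices at distance at most $2$ in $G$ that need not have received distinct colors from $f$ on $G-u_1$ are $\{u,z\}$ and the pairs containing $u_1$; the former is handled because $z\in N_2(u)$ when $u$ is recolored, and the latter because $u_1$ is colored last (and each later-colored vertex avoids the earlier-colored members of its $G$-second-neighbourhood). Hence we obtain a $2$-distance coloring of $G$ with $\Delta+4$ colors.

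I do not expect a serious obstacle: the argument rests only on $|N_2(x)|\le D(x)$ and on decoloring exactly the two vertices $u$ and $v$. Two minor points need care. First, $N_2(u)$ must be computed in $G$, not in $G-u_1$, so that recoloring $u$ avoids $f(z)$ — restoring $u_1$ puts $u$ and $z$ at distance $2$. Second, the order $u,v,u_1$ is chosen so that $u$ and $v$ each still see an uncolored vertex of their second neighbourhood when recolored. Note, finally, that the hypothesis ``$v$ is special of type II'' is used only through the facts that $v$ is a $4(3)$-vertex and that its non-$2$-neighbour $u$ is a $5$-vertex, together with $\Delta\ge 7$.
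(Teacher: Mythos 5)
Your proof is correct and is essentially the paper's argument: the paper deletes the $5$-vertex $u$ and decolors $w$ ($=u_1$) and $v$, whereas you delete the $2$-vertex $u_1$ and decolor $u$ and $v$, but both end with the same three uncolored vertices recolored in the order $u,v,u_1$ with the same counts ($D(u)=12$, $D(v)=11$, $D(u_1)\le 10$, each offset by the still-uncolored members of its second neighbourhood). The choice of which vertex to remove by minimality is immaterial, so there is nothing to add.
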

\begin{proof}
Assume for a contradiction that $u$ is weak-adjacent to a $5^-$-vertex $z$. Denote by $w$, the $2$-vertex adjacent to both $u$ and $z$.  By minimality, $G-u$ has a 2-distance coloring $f$ with  $\D+4$ colors. We first decolor $w$ and $v$.  It follows that $u$ has an available color, since it has at most $\D+3$ forbidden colors. We next give an available color to $v$ because it has $\D+3$ forbidden colors.  At last, we may assign an available color to $w$ as $D(w)\leq 10$. This provides a 2-distance coloring of $G$ with  $\D+4$ colors, a contradiction.\medskip
\end{proof}

%
%
%
%
%

We now define bad and semi-bad vertices according to their degrees, all of which are also special vertices. If a $5$-vertex is weak-adjacent to five $5^+$-vertex, we call it as a \emph{bad 5-vertex} (see Figure \ref{fig:bad5-a}). If a $5$-vertex is weak-adjacent to four $5^+$-vertex and star-adjacent to a $4$-vertex (or a $5$-vertex) $v_5$, we call it a \emph{semi-bad 5-vertex} (see Figure \ref{fig:bad5-b}).   
Similarly,  if a $4$-vertex is weak-adjacent to three $6$-vertices and a $4$-vertex (or a $5$-vertex) $u_4$, we call it a \emph{bad 4-vertex} (see Figure \ref{fig:bad4-a}). If a $4$-vertex is weak-adjacent to two $6$-vertex, star-adjacent to a $5$-vertex $v_3$, and adjacent to a $4$-vertex (or a $5$-vertex) $u_4$, we call it a \emph{semi-bad 4-vertex} (see Figure \ref{fig:bad4-b}).

\begin{figure}[htb]
\centering   
\subfigure[]{
\label{fig:bad5-a}
\begin{tikzpicture}[scale=1]
\node [nodr] at (0,0) (v) [label=above: {\scriptsize $v$}] {};
\node [nod2] at (-1,0) (u1) [label=above:{\scriptsize $u_1$}] {}
	edge (v);
\node [nod2] at (-2,0) (v1) [label=above:{\scriptsize $v_1$}] {}
	edge  (u1);
\node [nod2] at (-1,1) (u2) [label=above:{\scriptsize $u_2$}] {}
	edge [] (v);		
\node [nod2] at (-1.7,1.7) (v2) [label=above:{\scriptsize $v_2$}] {}
	edge [] (u2);
\node [nod2] at (1,1) (u3)  [label=above:{\scriptsize $u_3$}] {}
	edge [] (v);
\node [nod2] at (1.7,1.7) (v3) [label=above:{\scriptsize $v_3$}]  {}
	edge [] (u3);
\node [nod2] at (1,0) (u4) [label=above:{\scriptsize $u_4$}] {}
	edge [] (v);
\node [nod2] at (2,0) (v4) [label=above:{\scriptsize $v_4$}] {}
	edge [] (u4);	
\node [nod2] at (0,-.8) (u5) [label=right:{\scriptsize $u_5$}] {}
	edge [] (v);
\node [nod2] at (0,-1.6) (v5) [label=right:{\scriptsize $v_5$}] {}
	edge [] (u5);		
\node at (1,-2.55) (asd)     {};				
\end{tikzpicture}  }\hspace*{1cm}
\subfigure[]{
\label{fig:bad5-b}
\begin{tikzpicture}[scale=1]
\node [nodr] at (0,0) (v) [label=above:{\scriptsize $v$}] {};
\node [nod2] at (-1,0) (u1) [label=above:{\scriptsize $u_1$}] {}
	edge (v);
\node [nod2] at (-2,0) (v1) [label=above:{\scriptsize $v_1$}] {}
	edge  (u1);
\node [nod2] at (-1,1) (u2) [label=above:{\scriptsize $u_2$}] {}
	edge [] (v);		
\node [nod2] at (-1.7,1.7) (v2) [label=above:{\scriptsize $v_2$}] {}
	edge [] (u2);
\node [nod2] at (1,1) (u3)  [label=above:{\scriptsize $u_3$}] {}
	edge [] (v);
\node [nod2] at (1.7,1.7) (v3) [label=above:{\scriptsize $v_3$}]  {}
	edge [] (u3);
\node [nod2] at (1,0) (u4) [label=above:{\scriptsize $u_4$}] {}
	edge [] (v);
\node [nod2] at (2,0) (v4) [label=above:{\scriptsize $v_4$}] {}
	edge [] (u4);	
\node [nod2] at (0,-.8) (u5) [label=right:{\scriptsize $u_5$}] {}
	edge [] (v);
\node [nod2] at (-1,-1.6) (z) [label=left:{\scriptsize $z$}] {}
	edge [] (u5);
\node [nod2] at (-1,-2.4) (y) [label=left:{\scriptsize $y$}] {}
	edge [] (z);
\node [nod2] at (1,-1.6) (v5) [label=left:{\scriptsize $v_5$}] {}
	edge [] (u5);	
\node [nod2] at (2,-.8) (v51) {}
	edge [dotted] (v5);	
\node [nod2] at (2,-1.4) (v52) {}
	edge [] (v5);
\node [nod2] at (2,-2) (v53) {}
	edge [] (v5);
\node [nod2] at (2,-2.6) (v54) {}
	edge [] (v5);											
\end{tikzpicture}  }
\caption{$(a)$ A bad $5$-vertex. \ $(b)$ A semi-bad $5$-vertex.}
\label{fig:bad5}
\end{figure}
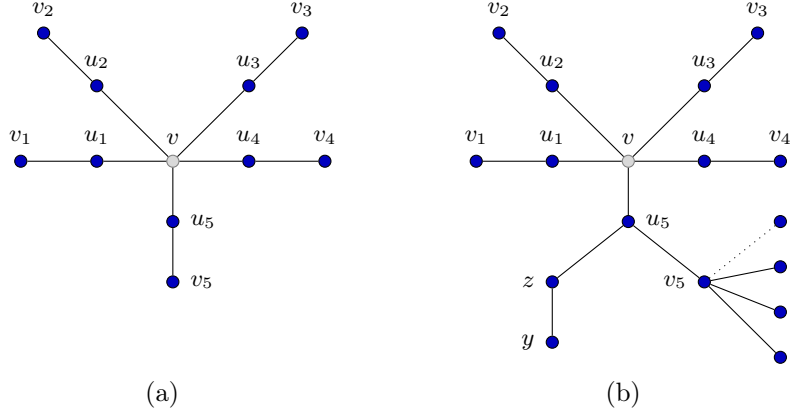

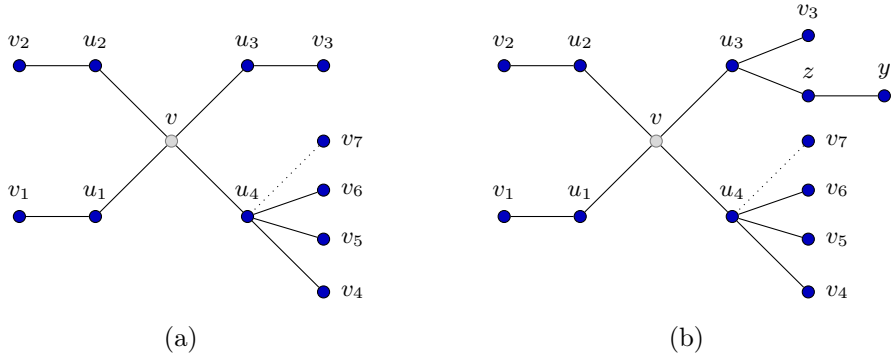
\begin{figure}[htb]
\centering   
\subfigure[]{
\label{fig:bad4-a}
\begin{tikzpicture}[scale=1]
\node [nodr] at (0,0) (v) [label=above:{\scriptsize $v$}] {};
\node [nod2] at (-1,-1) (u1) [label=above:{\scriptsize $u_1$}] {}
	edge (v);
\node [nod2] at (-2,-1) (v1) [label=above:{\scriptsize $v_1$}] {}
	edge  (u1);
\node [nod2] at (-1,1) (u2) [label=above:{\scriptsize $u_2$}] {}
	edge [] (v);		
\node [nod2] at (-2,1) (v2) [label=above:{\scriptsize $v_2$}] {}
	edge [] (u2);
\node [nod2] at (1,1) (u3)  [label=above:{\scriptsize $u_3$}] {}
	edge [] (v);
\node [nod2] at (2,1) (v3) [label=above:{\scriptsize $v_3$}]  {}
	edge [] (u3);
\node [nod2] at (1,-1) (u4) [label=above:{\scriptsize $u_4$}] {}
	edge [] (v);
\node [nod2] at (2,0) (v7) [label=right:{\scriptsize $v_7$}] {}
	edge [dotted] (u4);	
\node [nod2] at (2,-.65) (v6)  [label=right:{\scriptsize $v_6$}]  {}
	edge [] (u4);
\node [nod2] at (2,-1.3) (v5)  [label=right:{\scriptsize $v_5$}]  {}
	edge [] (u4);
\node [nod2] at (2,-2) (v4)  [label=right:{\scriptsize $v_4$}] {}
	edge [] (u4);					
\end{tikzpicture}  }\hspace*{1cm}
\subfigure[]{
\label{fig:bad4-b}
\begin{tikzpicture}[scale=1]
\node [nodr] at (0,0) (v) [label=above:{\scriptsize $v$}] {};
\node [nod2] at (-1,-1) (u1) [label=above:{\scriptsize $u_1$}] {}
	edge (v);
\node [nod2] at (-2,-1) (v1) [label=above:{\scriptsize $v_1$}] {}
	edge  (u1);
\node [nod2] at (-1,1) (u2) [label=above:{\scriptsize $u_2$}] {}
	edge [] (v);		
\node [nod2] at (-2,1) (v2) [label=above:{\scriptsize $v_2$}] {}
	edge [] (u2);
\node [nod2] at (1,1) (u3)  [label=above:{\scriptsize $u_3$}] {}
	edge [] (v);
\node [nod2] at (2,1.4) (v3) [label=above:{\scriptsize $v_3$}]  {}
	edge [] (u3);
\node [nod2] at (2,.6) (z) [label=above:{\scriptsize $z$}]  {}
	edge [] (u3);	
\node [nod2] at (3,.6) (y) [label=above:{\scriptsize $y$}]  {}
	edge [] (z);		
\node [nod2] at (1,-1) (u4) [label=above:{\scriptsize $u_4$}] {}
	edge [] (v);
\node [nod2] at (2,0) (v7) [label=right:{\scriptsize $v_7$}] {}
	edge [dotted] (u4);	
\node [nod2] at (2,-.65) (v6)  [label=right:{\scriptsize $v_6$}]  {}
	edge [] (u4);
\node [nod2] at (2,-1.3) (v5)  [label=right:{\scriptsize $v_5$}]  {}
	edge [] (u4);
\node [nod2] at (2,-2) (v4)  [label=right:{\scriptsize $v_4$}] {}
	edge [] (u4);															
\end{tikzpicture}  }
\caption{$(a)$ A bad $4$-vertex. \ $(b)$ A semi-bad $4$-vertex.}
\label{fig:bad4}
\end{figure}

The next observations follow from Lemma \ref{lem:delta6-special-not-weak-adj}.

\begin{obs}\label{obs:special-to-bad} 
If $\D=6$, then the followings hold.
\begin{itemize}
\item[$(1)$] If a special vertex of type I is star-adjacent to a $5$-vertex, then it is a semi-bad $4$-vertex. 
\item[$(2)$] If a special vertex of type III is star-adjacent to a $4$-vertex (or a $5$-vertex), then it is a semi-bad $5$-vertex. 
\item[$(3)$] Every special vertex of type II (resp. type IV) is a bad $4$-vertex (resp. a bad $5$-vertex). 
\end{itemize}
\end{obs}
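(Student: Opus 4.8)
Observation \ref{obs:special-to-bad} is a purely combinatorial consequence of the definitions of special, bad, and semi-bad vertices, together with the weak-adjacency restrictions supplied by Lemma \ref{lem:delta6-special-not-weak-adj} in the case $\D=6$. So the strategy is, for each of the three items, to take the given special vertex, unpack what its type forces about its neighbourhood, invoke Lemma \ref{lem:delta6-special-not-weak-adj} to upgrade every weak-neighbour to a $6$-vertex (since $\D=6$, ``$5^-$'' and ``$4^-$'' get promoted to ``$=6=\D$'', the only possibility left), and then check that the resulting local picture matches verbatim one of the four pictures in Figures \ref{fig:bad5}--\ref{fig:bad4}. There is essentially no inequality-pushing here; it is bookkeeping about degrees of neighbours.

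\smallskip
\noindent\emph{Item (1).} Let $v$ be a special vertex of type I that is star-adjacent to a $5$-vertex. By definition of type I, $v$ is a $4(2)$-vertex adjacent to a $3(1)$-vertex and to a $4$- or $5$-vertex; since the fourth neighbour has already been used up by the $3(1)$-vertex and the $4/5$-vertex, $v$ has exactly two $2$-neighbours, one $3(1)$-neighbour, and one $4/5$-neighbour. The two $2$-neighbours make $v$ weak-adjacent to two other vertices; by Lemma \ref{lem:delta6-special-not-weak-adj}(1), being a type I special vertex, $v$ is not weak-adjacent to any $5^-$-vertex, so both of those weak-neighbours are $6$-vertices. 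Renaming, $v$ is weak-adjacent to two $6$-vertices, star-adjacent to a $5$-vertex (the hypothesis of item (1)), and adjacent to a $4$- or $5$-vertex; this is exactly the definition of a semi-bad $4$-vertex (Figure \ref{fig:bad4-b}), so (1) follows. The only subtlety to flag is that I should make sure the $3(1)$-neighbour's star-adjacency role and the separate $4/5$-neighbour are not conflated: the definition of semi-bad $4$-vertex asks for a vertex $v_3$ of degree $5$ (here the star-adjacent $5$-vertex, which is the $3(1)$-neighbour promoted by the hypothesis) \emph{and} a distinct $4$- or $5$-neighbour $u_4$ (the remaining neighbour guaranteed by type I).

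\smallskip
\noindent\emph{Item (2).} Let $v$ be a special vertex of type III that is star-adjacent to a $4$- or $5$-vertex. By definition, $v$ is a $5(4)$-vertex adjacent to a $3(1)$-vertex, so $v$ has four $2$-neighbours and one $3(1)$-neighbour; the four $2$-neighbours give four weak-neighbours. Since $v$ is type III, Lemma \ref{lem:delta6-special-not-weak-adj}(2) says $v$ is not weak-adjacent to any $4^-$-vertex, so with $\D=6$ each of these four weak-neighbours has degree $5$ or $6$; in particular $v$ is weak-adjacent to four $5^+$-vertices, is star-adjacent to the $4/5$-vertex of the hypothesis (this is the $3(1)$-neighbour, whose star-neighbour has that degree), which is precisely the definition of a semi-bad $5$-vertex (Figure \ref{fig:bad5-b}). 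Thus (2) holds.

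\smallskip
\noindent\emph{Item (3).} For type II: $v$ is a $4(3)$-vertex adjacent to a $4$- or $5$-vertex, so $v$ has three $2$-neighbours and one $4/5$-neighbour, giving three weak-neighbours. By Lemma \ref{lem:delta6-special-not-weak-adj}(1) (type II), none of them is a $5^-$-vertex, hence with $\D=6$ all three are $6$-vertices; so $v$ is weak-adjacent to three $6$-vertices and adjacent to a $4/5$-vertex, i.e.\ a bad $4$-vertex (Figure \ref{fig:bad4-a}). For type IV: $v$ is a $5(5)$-vertex, so all five neighbours are $2$-vertices and $v$ has five weak-neighbours; by Lemma \ref{lem:delta6-special-not-weak-adj}(2) (type IV), none is a $4^-$-vertex, so each has degree $5$ or $6$, i.e.\ $v$ is weak-adjacent to five $5^+$-vertices, which is the definition of a bad $5$-vertex (Figure \ref{fig:bad5-a}). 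This completes all three items. The main (mild) obstacle throughout is simply keeping the promotion ``$5^-$ or $4^-$ under $\D=6$ $\Rightarrow$ degree in $\{5,6\}$ or $\{=6\}$'' straight and matching it against the exact degree labels demanded by the bad/semi-bad definitions; once the weak-neighbour degrees are pinned down, each case is an immediate dictionary check.
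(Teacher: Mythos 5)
Your proposal is correct and matches the paper's intent exactly: the paper offers no written proof beyond the remark that the observation ``follows from Lemma~\ref{lem:delta6-special-not-weak-adj},'' and your argument is precisely that bookkeeping — unpack each special type, use the lemma to pin down the degrees of the weak-neighbours, and match against the definitions of bad and semi-bad vertices. Nothing further is needed.
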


\newpage

\begin{lem}\label{lem:4-vertex-color-class}
Let $\D=6$, and suppose that $G$ has a bad or a semi-bad $4$-vertex $v$ (see Figure \ref{fig:bad4}).  Then $G-v$ has a 2-distance coloring $f$ which satisfies the followings.
\begin{itemize}
\item[$(a)$] If $v$ is a semi-bad 4-vertex adjacent to a $4$-vertex, then all $f(v_i)$'s are distinct for $i\in [6]$. Also  $f(N[v_i]-\{u_i\}) \subseteq \{1,2,\ldots,6\}$ for  $i\in [3]$, and $\{1,2,\ldots,6\} \subseteq (f(N[v_4])\cup f(N[v_5])\cup f(N[v_6]))-f(u_4)  $.
\item[$(b)$] If $v$ is a semi-bad 4-vertex adjacent to a $5$-vertex, then  $f(N[v_i]-\{u_i\}) \subseteq \{1,2,\ldots,6\}$ for  $i\in [3]$, and $\{1,2,\ldots,6\} \subseteq (f(N[v_4])\cup f(N[v_5])\cup f(N[v_6]))-f(u_4)  $.
\item[$(c)$] If $v$ is a bad 4-vertex adjacent to a $4$-vertex, then all $f(v_i)$'s are distinct for $i\in [6]$.
Also  $f(N[v_i]-\{u_i\}) = \{1,2,\ldots,6\}$ for  $i\in [3]$, and $\{1,2,\ldots,6\} \subseteq (f(N[v_4])\cup f(N[v_5])\cup f(N[v_6]))-f(u_4)  $.
\item[$(d)$] If $v$ is a bad 4-vertex adjacent to a $5$-vertex, then $f(N[v_i]-\{u_i\})=\{1,2,\ldots,6\}$ for  $i\in [3]$. 
\end{itemize}
\end{lem}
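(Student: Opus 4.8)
The plan is a reducibility argument driven by the non-extendability of colorings of $G-v$. Let $f$ be any $2$-distance coloring of $G-v$ with $\Delta+4=10$ colors, which exists by the minimality of $G$ (and by the Bu--Zhu bound when $\Delta(G-v)<6$, as in the opening of this section). Exactly one of (a)--(d) applies to $v$, according to whether $v$ is bad or semi-bad and whether its $4$-or-$5$-neighbour $u_4$ is a $4$-vertex or a $5$-vertex; by the very definition of bad/semi-bad $4$-vertex the vertices in Figure \ref{fig:bad4} have the indicated degrees (so $v_1,v_2,v_3$ are $6$-vertices in the bad case, while $v_1,v_2$ are $6$-vertices and $v_3$ is a $5$-vertex in the semi-bad case), and the girth-$6$ hypothesis makes them pairwise distinct, keeps $v_i\notin N[v]$, and makes $N(v_i)\smallsetminus\{u_i\}$ disjoint from $\{v,u_1,u_2,u_3,u_4\}$. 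I would prove that $f$ already satisfies the relevant item by showing that, if it did not, $f$ could be turned into a $2$-distance coloring of $G$.

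The workhorse is the following extension template. Decolor the $2$-neighbours of $v$ (namely $u_1,u_2$, and also $u_3$ in the bad case) and, in the semi-bad case, also decolor the pendant $2$-vertex $z$ hanging off $u_3$ (and in some subcases $u_3$ itself). The colors forbidden at $v$ are then contained in $\{f(u_4)\}\cup\{f(v_1),f(v_2),f(v_3)\}\cup\{f(v_4),f(v_5),f(v_6)\}$, augmented by $f(v_7)$ when $u_4$ is a $5$-vertex and by $f(u_3)$ when $u_3$ is an undecolored $3(1)$-vertex; so the palette $P$ of $v$ has size at least $3$ when $u_4$ is a $4$-vertex and at least $2$ when $u_4$ is a $5$-vertex. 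Fix $f(v)\in P$ and recolor the decolored $2$-vertices one at a time, deferring any pendant $z$ to the very end, where Corollary \ref{cor:2-vertex} always supplies a color. When we recolor the sibling $u_i$ whose mate is $v_i$, its forbidden set lies inside $\{f(v),f(u_4)\}\cup f(N[v_i]\smallsetminus\{u_i\})$ together with the colors of the already-recolored siblings; only the last sibling can have all $10$ colors forbidden. Since the whole procedure must fail, that last step must fail for every admissible intermediate choice, so its forbidden set is exactly $[10]$, a set of $10$ distinct colors; equivalently $f(N[v_i]\smallsetminus\{u_i\})$ is disjoint from, and complementary to, the set $\{f(v),f(u_4)\}\cup\{\text{recolored siblings}\}$.

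Reading off the conclusions is now a counting exercise in the color set. Since $P$ does not depend on the order in which the siblings are recolored and $f(v)$ may be any color of $P$, for every index $i\in[3]$ with $u_i$ a $2$-vertex (all three in the bad case; $i\in\{1,2\}$ in the semi-bad case, where $i=3$ needs a parallel attempt that additionally decolors $z$) the set $f(N[v_i]\smallsetminus\{u_i\})$ must be disjoint from $P\cup\{f(u_4)\}$, and comparing cardinalities forces $f(N[v_i]\smallsetminus\{u_i\})=[10]\smallsetminus(P\cup\{f(u_4)\})$ when $v_i$ is a $6$-vertex and a proper subset of it when $v_i$ is only a $5$-vertex; naming this common $6$-element set $\{1,\dots,6\}$ gives the stated containments. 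When $u_4$ is a $4$-vertex, the fact that $|P|=3$ (it is $\ge 3$ by the count and $\le 3$ because otherwise the last sibling could be recolored for some choice of $f(v)$) forces $|\{f(u_4),f(v_1),\dots,f(v_6)\}|=7$, i.e.\ $f(v_1),\dots,f(v_6)$ are pairwise distinct and equal $\{1,\dots,6\}$ — this is the ``distinct'' clause, which disappears when $u_4$ is a $5$-vertex because the extra color $f(v_7)$ absorbs the count. Finally, for the clause $\{1,\dots,6\}\subseteq(f(N[v_4])\cup f(N[v_5])\cup f(N[v_6]))-f(u_4)$, I would use a variant attempt: if some $c\in\{1,\dots,6\}$ with $c\neq f(u_4)$ were absent from $f(N[v_4])\cup f(N[v_5])\cup f(N[v_6])$, then after decoloring the $2$-neighbours of $v$ one could recolor $u_4$ with $c$ (legal, since $c$ is missing around $v_4,v_5,v_6$ and the $u_j$ are uncolored), which removes a fresh color from $v$'s palette because $c\in\{f(v_1),\dots,f(v_6)\}$, leaving enough slack to run the template to completion.

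The main obstacle is exactly this bookkeeping: one must verify that the equality ``forbidden set of the last sibling $=[10]$'' survives for every admissible intermediate choice, which amounts to showing that the valid colors for $f(v)$ and for the earlier-recolored siblings are \emph{forced} to be precisely the colors outside the relevant $v_i$-neighbourhood. That in turn requires tight control of which of $f(v),f(u_4),f(u_j),f(v_i)$ and the colors on $N(v_i)\smallsetminus\{u_i\}$ may coincide, and here the girth-$6$ disjointness facts together with the $6$-vertex versus $5$-vertex distinction among the $v_i$ are essential, because the margins are zero. A secondary but real complication is the semi-bad case, where $u_3$ is a $3(1)$-vertex with the larger palette of a $3$-vertex and carries the pendant $z$ and its neighbour $y$: the decoloring must be arranged so that $u_3$, $z$ and $y$ are all accounted for, and it is precisely because $v_3$ is only a $5$-vertex there that the conclusion for $i=3$ is a containment rather than an equality.
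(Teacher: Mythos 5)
Your proposal is correct and follows essentially the same route as the paper: take a $10$-coloring of $G-v$, decolor the $2$-neighbours (and the pendant $z$ in the semi-bad case), and show that any failure of the claimed color structure — coincident $f(v_i)$'s, a color of some $C_i$ available at $u_4$, or $C_i\neq C_j$ — lets the "recolor in succession, last sibling gains a repeated color" template extend the coloring to $G$, a contradiction. The paper just carries out the bookkeeping you flag as the main obstacle via explicit recoloring orders in each of the four cases rather than via your unified palette-counting formulation.
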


\begin{proof}
$(a)$ Let $v$ be a semi-bad $4$-vertex adjacent to a $4$-vertex $u_4$ (see Figure \ref{fig:bad4-b}). 
It follows from  the definition that the vertices $v_1,v_2$ are $6$-vertices, and $v_3$ is a $5$-vertex. By minimality, $G-v$ has a 2-distance coloring $f$ with  $\D+4=10$ colors. We decolor the vertices $u_1,u_2,u_3,z$.

If there exist $i,j\in [6]$ with $i\neq j$ such that $f(v_i)=f(v_j)$, then $v$ has at most $6$ forbidden colors, we recolor $u_3,u_2,u_1,v$ in succession, respectively. At last, we recolor $z$ with an available color by Corollary \ref{cor:2-vertex}. 
Thus, we obtain a 2-distance coloring $f$ with  $\D+4=10$ colors, a contradiction.  We conclude that all $f(v_i)$'s are distinct for $i\in [6]$. Let $f(v_i)=a_i=i$ for $i\in [6]$. Consider  the 2-distance coloring $f$ of $G-v$, we decolor the vertices $u_1,u_2,u_3,z$.

Notice first that  if there exists  $a\in f(N[v_i]-\{u_i\})$ for some $i\in [2]$ such that $a$ is available color for $u_4$, then we recolor $u_4$ with $a$, and we recolor $u_{3-i},u_3,v$ with some available colors in succession. Observe that $u_i$ has at most $9$ forbidden colors, since $a\in f(N[v_i]-\{u_i\}) \cap f(u_4)$. Thus, $u_i$ has an available color by which we color it.  Lastly, we recolor $z$ with an available by Corollary \ref{cor:2-vertex}. Therefore, we obtain a 2-distance coloring $f$ with  $\D+4=10$ colors, a contradiction. We may further assume that  $f(u_4)\notin f(N[v_i]-\{u_i\})$ for each $i\in [2]$.  On the other hand, if there exists  $a\in f(N[v_i]-\{u_i\})$ such that $a\notin f(N[v_j]-\{u_j\})$ for some $i,j\in [2]$, then we recolor $u_j$ with $a$ (recall that $f(u_4)\neq a$), and we recolor $u_3,v$ with some available colors in succession. Similarly as above,  $u_i$ has an available color by which we color it.  Lastly, we recolor $z$ with an available by Corollary \ref{cor:2-vertex}. 
Thus, we obtain a 2-distance coloring $f$ with  $\D+4=10$ colors, a contradiction. So, $f(N[v_1]-\{u_1\})=f(N[v_2]-\{u_2\})$.

Let $f(N[v_i]-\{u_i\})=C_i$ for $i\in [2]$, and let $f(N[v_3]-\{u_3\})\cup f(y)=C_3$. We may assume without loss of generality that $C_1=\{1,2,\ldots,6\}$. If there exists  $a\in C_i$ such that $a\notin C_j$ for $i,j\in [3]$, then we recolor $u_j$ with $a$, and we apply the same process as above. Therefore, we obtain a 2-distance coloring $f$ with  $\D+4=10$ colors, a contradiction. Thus, we have $C_1=C_2=C_3$, and this implies that $f(N[v_i]-\{u_i\}) \subseteq \{1,2,\ldots,6\}$ for  $i\in [3]$.
On the other hand, if there exists  $a\in C_i$ for some $i\in [3]$ such that $a$ is available color for $u_4$, then we recolor $u_4$ with $a$. By a similar process as above, we recolor the remaining vertices in succession, a contradiction.   Thus, we conclude that  $\{1,2,\ldots,6\} \subseteq f((N[v_4]\cup N[v_5] \cup N[v_6]) -\{u_4\})$. \medskip

$(b)$ Let $v$ be a semi-bad $4$-vertex adjacent to a $5$-vertex $u_4$ (see Figure \ref{fig:bad4-b}). Notice that $v_1$ and $v_2$ are $6$-vertices, and $v_3$ is a $5$-vertex. By minimality, $G-v$ has a 2-distance coloring $f$ with  $\D+4=10$ colors. We decolor 2-vertices $u_1,u_2,u_3,z$.

Let $f(N[v_i]-\{u_i\})=C_i$ for $i\in [2]$, and let $f(N[v_3]-\{u_3\})\cup f(y)=C_3$. We may assume without loss of generality that $C_1=\{1,2,\ldots,6\}$.  If there exists  $a\in C_i$ for some $i\in [3]$ such that $a$ is available color for $u_4$, then we recolor $u_4$ with $a$. We also recolor $v$ with an available color, since it has at most $8$ forbidden colors. Next, we recolor $u_k$ with an available color for each $k\in \{1,2,3\}-\{i\}$, since it has at most $9$ forbidden colors. Now, observe that $u_i$ has at most $9$ forbidden colors due to $f(u_4) \in f(N[v_i]-\{u_i\})$. Finally, we recolor $z$ with an available by Corollary \ref{cor:2-vertex}. As a result, we obtain a 2-distance coloring $f$ with  $\D+4=10$ colors, a contradiction. Thus we conclude that $\{1,2,\ldots,6\} \subseteq f((N[v_4]\cup N[v_5] \cup N[v_6] \cup N[v_7]) -\{u_4\})$. This particularly implies that  $f(u_4)\notin C_i$ for each $i\in [3]$. 


If there exists  $a\in C_i$ such that $a\notin C_j$ for $i,j\in [3]$, then we recolor $u_j$ with $a$. In addition, we recolor $v$ with an available color, since it has at most $9$ forbidden colors. Next, we recolor $u_k$ with an available color for $k\in \{1,2,3\}-\{i,j\}$, since it has at most $9$ forbidden colors. Now, observe that $u_i$ has at most $9$ forbidden colors due to $f(u_j) \in f(N[v_i]-\{u_i\})$. Finally, we recolor $z$ with an available color by Corollary \ref{cor:2-vertex}. As a result, we obtain a 2-distance coloring $f$ with  $\D+4=10$ colors, a contradiction. Thus $C_1=C_2=C_3$ and so $f(N[v_i]-\{u_i\}) \subseteq \{1,2,\ldots,6\}$ for  $i\in [3]$. \medskip

$(c)$ Let $v$ be a bad $4$-vertex adjacent to a $4$-vertex $u_4$ (see Figure \ref{fig:bad4-a}). Notice that $v_1,v_2,v_3$ are $6$-vertices.   By minimality, $G-v$ has a 2-distance coloring $f$ with  $\D+4=10$ colors. We decolor the 2-vertices $u_1,u_2,u_3$. If there exist $i,j\in [6]$ with $i\neq j$ such that $f(v_i)=f(v_j)$, then $v$ has at most $6$ forbidden colors, we recolor $u_1,u_2,u_3,v$ in succession, respectively. 
Therefore, we obtain a 2-distance coloring $f$ with  $\D+4=10$ colors, a contradiction.  This implies that all $f(v_i)$'s are distinct for $i\in [6]$. Let $f(v_i)=a_i=i$ for $i\in [6]$.

Let $f(N[v_i]-\{u_i\})=C_i$ for $i\in [3]$. We may assume without loss of generality that $C_1=\{1,2,\ldots,6\}$. If there exists  $a\in C_i$ for some $i\in [3]$ such that $a$ is available color for $u_4$, then we recolor $u_4$ with $a$. By a similar process as above, we recolor the remaining vertices in succession, a contradiction.   Thus we conclude that  $\{1,2,\ldots,6\} \subseteq f((N[v_4]\cup N[v_5] \cup N[v_6]) -\{u_4\})$. This particularly implies that  $f(u_4)\notin C_i$ for $i\in [3]$. 

If there exists  $a\in C_i$ such that $a\notin C_j$ for $i\in [3]$, then we recolor $u_j$ with $a$. For $k\in \{1,2,3\}-\{i,j\}$, we next recolor $v$ and $u_k$  with some available colors,  since each one has at most $9$ forbidden colors. Now observe that $u_i$ has at most $9$ forbidden colors  because of $f(u_j) \in f(N[v_i]-\{u_i\})$. Therefore, we obtain a 2-distance coloring $f$ with  $\D+4=10$ colors, a contradiction. Thus $C_1=C_2=C_3$, and so $f(N[v_i]-\{u_i\}) \subseteq \{1,2,\ldots,6\}$ for  $i\in [3]$.\medskip

$(d)$ Let $v$ be a bad $4$-vertex adjacent to a $5$-vertex $u_4$ (see Figure \ref{fig:bad4-a}). Notice that $v_1,v_2,v_3$ are $6$-vertices. By minimality, $G-v$ has a 2-distance coloring $f$ with  $\D+4=10$ colors. We decolor the 2-vertices $u_1,u_2,u_3$.

Let $f(N[v_i]-\{u_i\})=C_i$ for $i\in [3]$. We may assume without loss of generality that $C_1=\{1,2,\ldots,6\}$. If there exists  $a\in C_i$ for some $i\in [3]$ such that $a$ is available color for $u_4$, then we recolor $u_4$ with $a$. By a similar process as above, we recolor the remaining vertices in succession, a contradiction.  Therefore we may assume that $f(u_4)\notin C_i$ for $i\in [3]$. If there exists  $a\in C_i$ such that $a\notin C_j$ for $i\in [3]$, then we recolor $u_j$ with $a$ (recall that $f(u_4)\neq a$).  We recolor $v$ with an available color, since it has at most $9$ forbidden colors. Next, we recolor $u_k$ with an available color for $k\in \{1,2,3\}-\{i,j\}$, since it has at most $9$ forbidden colors. Now observe that $u_i$ has at most $9$ forbidden colors due to $f(u_j) \in f(N[v_i]-\{u_i\})$.  Thus, we obtain a 2-distance coloring $f$ with  $\D+4=10$ colors, a contradiction. Thus $C_1=C_2=C_3$. 
%
\end{proof}

\begin{lem}\label{lem:5-vertex-color-class}
Let $\D=6$, and suppose that $G$ has a bad or a semi-bad $5$-vertex $v$ (see Figure \ref{fig:bad5}).  Then $G-v$ has a 2-distance coloring $f$ satisfying:
\begin{itemize}
\item[$(a)$] If $v$ is a bad 5-vertex and weak-adjacent to a $5$-vertex, then all $f(v_i)$'s are distinct for $i\in [5]$, and $\{1,2,\ldots,5\} \subseteq f(N[v_i]-\{u_i\})$ for  $i\in [5]$. 
\item[$(b)$] If $v$ is  a bad 5-vertex and weak-adjacent to no $5^-$-vertex, then  $\{1,2,\ldots,5\} \subseteq f(N[v_i]-\{u_i\})$ for  $i\in [5]$. 
\item[$(c)$] If $v$ is a semi-bad 5-vertex such that $v$ is weak-adjacent to a $5$-vertex or  star-adjacent to a $4$-vertex, then all $f(v_i)$'s are distinct for  $i\in [5]$ and $\{1,2,\ldots,5\} \subseteq f(N[v_i]-\{u_i\})$ for  $i\in [4]$.  In particular, $\{1,2,\ldots,5\} \subseteq f(N[v_5]-\{u_5\})\cup f(y)$. 
\end{itemize}
\end{lem}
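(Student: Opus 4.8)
The plan is to follow the template of Lemma~\ref{lem:4-vertex-color-class}. By minimality $G-v$ admits a $2$-distance coloring $f$ with $\D+4=10$ colors, and I will show that after renaming colors $f$ meets the asserted requirement in each case; otherwise I can recolor a bounded region around $v$ to produce a $2$-distance coloring of $G$, a contradiction. Recall the local picture of Figure~\ref{fig:bad5}: $v$ has neighbours $u_1,\dots,u_5$, where in the bad case every $u_i$ is a $2$-vertex with second neighbour a $5^+$-vertex $v_i$, while in the semi-bad case $u_1,\dots,u_4$ are such $2$-vertices and $u_5$ is a $3(1)$-vertex whose non-$2$-neighbour $v_5$ is a $4$- or $5$-vertex and whose $2$-neighbour $z$ has second neighbour $y$. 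As $g\ge 6$, all the listed vertices are pairwise distinct and carry only the edges shown; in particular $X:=\{v,u_1,\dots,u_5\}$ is a clique of $G^2$ and thus needs six distinct colors.

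The common first step is to decolor $u_1,\dots,u_5$, and in the semi-bad case also $z$. Since $z$ is a $2$-vertex adjacent to the $3$-vertex $u_5$, once everything else is properly colored $z$ can be re-colored last by Corollary~\ref{cor:2-vertex}; hence it remains only to assign the vertices of $X$ pairwise distinct colors, each $x\in X$ taking a color from the list $L(x)$ of colors missing on its already-colored $G^2$-neighbours. Counting those neighbours gives $|L(v)|\ge 5$ (the colored distance-$2$ vertices of $v$ are precisely $v_1,\dots,v_5$), $|L(u_i)|\ge 10-d(v_i)\ge 4$ for each $2$-vertex $u_i$ (whose colored $G^2$-neighbours are exactly the $d(v_i)$ vertices of $N[v_i]\setminus\{u_i\}$), and in the semi-bad case $|L(u_5)|\ge 10-(d(v_5)+1)\ge 4$, the lone extra colored $G^2$-neighbour of $u_5$ being $y$. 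Under the hypothesis of (a) or (c) one of these bounds improves to $|L(u_i)|\ge 5$, namely when the relevant $v_i$ (resp.\ $v_5$) has degree $5$ (resp.\ $4$).

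By the defect form of Hall's theorem --- equivalently, by recoloring the vertices of $X$ greedily in a good order --- assigning $X$ a valid coloring fails only when some $S\subseteq X$ has $\bigl|\bigcup_{x\in S}L(x)\bigr|<|S|$; since every list has size $\ge 4$ while $|L(v)|\ge 5$, such an $S$ must be either $\{u_1,\dots,u_5\}$ with all five lists inside a common $4$-set, or all of $X$ with all six lists inside a common $5$-set $T$. The first alternative is excluded under the hypotheses of (a) and (c) (where some $|L(u_i)|\ge 5$), and when it occurs in (b) it forces $f(N[v_i]-\{u_i\})$ to be one and the same $6$-set for every $i\in[5]$, which after renaming contains $\{1,\dots,5\}$. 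In the second alternative, comparing cardinalities yields $L(v)=T$, so $f(v_1),\dots,f(v_5)$ are five distinct colors --- the distinctness clause of (a) and (c) --- while the large list (in (a),(c)) also equals $T$ and $L(u_i)\subseteq T$ for every $i$. Renaming $\{1,\dots,10\}\setminus T$ as $\{1,\dots,5\}$, the inclusion $L(u_i)\subseteq T$ says exactly that $\{1,\dots,5\}\subseteq f(N[v_i]-\{u_i\})$ for each $2$-vertex $u_i$, and for $u_5$ in the semi-bad case it reads $\{1,\dots,5\}\subseteq f(N[v_5]-\{u_5\})\cup f(y)$, since $y$ is the extra colored $G^2$-neighbour. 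Collecting the possible outcomes for each hypothesis gives (a), (b), (c).

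The step I expect to be most delicate is the semi-bad bookkeeping: confirming that the colored $G^2$-neighbours of the $3(1)$-vertex $u_5$ are exactly the $d(v_5)$ vertices of $N[v_5]\setminus\{u_5\}$ together with the single vertex $y$ (so that no further vertex enters the count), and checking that once $X$ has been colored the partial coloring is still a legitimate $2$-distance coloring, so that Corollary~\ref{cor:2-vertex} genuinely applies to $z$. A smaller but necessary point is to choose, in case (b), the color renaming so that $\{1,\dots,5\}\subseteq f(N[v_i]-\{u_i\})$ holds simultaneously for all $i\in[5]$, as this is the form in which the lemma is later invoked.
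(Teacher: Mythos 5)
Your argument is correct, and it reaches the same conclusions as the paper by a noticeably cleaner route. The paper's proof decolors exactly the same set ($u_1,\dots,u_5$, plus $z$ in the semi-bad case) but then treats each structural conclusion separately: assuming, say, $f(v_i)=f(v_j)$ or $c\in C_i\setminus C_j$, it exhibits an explicit greedy recoloring order ($u_2,u_3,u_4,u_5$, then the vertex with the small second neighbourhood, then $v$, then $z$ via Corollary \ref{cor:2-vertex}) and verifies the forbidden-colour count at each step, repeating this for every subcase of (a), (b), (c). Your defect-Hall formulation on the $G^2$-clique $X=\{v,u_1,\dots,u_5\}$ packages all of these greedy orders at once: since every list has size at least $4$ and $|L(v)|\ge 5$, the only possible violating sets are $\{u_1,\dots,u_5\}$ (all five lists equal to a common $4$-set, which is exactly the first alternative in the paper's case (b)) and $X$ itself (all lists inside the $5$-set $L(v)$, which yields both the distinctness of the $f(v_i)$ and the containments $\{1,\dots,5\}\subseteq f(N[v_i]-\{u_i\})$ after renaming). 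What your approach buys is uniformity and a smaller verification burden — one never has to guess the right recoloring order, and the three cases differ only in which list is guaranteed to have size $5$; what the paper's approach buys is that it stays entirely elementary and makes each recoloring step explicit, which matches the style of the surrounding lemmas. The two delicate points you flag are indeed the right ones, and both check out: girth $6$ ensures the colored square-neighbours of each $u_i$ are exactly $N[v_i]\setminus\{u_i\}$ (plus $y$ for the $3(1)$-vertex $u_5$), and $z$ can always be recoloured last by Corollary \ref{cor:2-distance}\ref{cor:2-vertex}; also, in case (b) both Hall violations force a single common set containing five colours, so one renaming works simultaneously for all $i$.
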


\begin{proof}
$(a)$ Let $v$ be a bad $5$-vertex (see Figure \ref{fig:bad5-a}).  By minimality, $G-v$ has a 2-distance coloring $f$ with  $\D+4=10$ colors. We decolor 2-vertices $u_1,u_2,\ldots,u_5$. Each $v_i$ is a $5$ or $6$-vertex by the definition of $v$ together with $\D=6$. Suppose that there exists $i\in [5]$ such that $v_i$ is a $5$-vertex (say $v_1$, by symmetry). Assume for a contradiction that there exist $i,j\in [5]$ with $i\neq j$ such that $f(v_i)=f(v_j)$. Then, we recolor $u_2,u_3,u_4,u_5$ in succession, and we next consider the vertex $u_1$. Since it has at most $9$ forbidden colors due to $d(v_1)=5$, we recolor it with an available color. Now observe that $v$ has at most $9$ forbidden colors, since $f(v_i)=f(v_j)$,  and so we assign  an available color to $v$. Thus, we obtain a 2-distance coloring $f$ with  $\D+4=10$ colors, a contradiction. This implies that all $f(v_i)$'s are distinct. Let $f(v_i)=a_i=i$ for $i\in [5]$. 

If $i\notin f(N[v_j]-\{u_j\})$ for some $i,j\in [5]$ with $i\neq j$, then we recolor $u_j$ with $i$, and recolor the remaining vertices of $N(v)$ in succession. Finally, $v$ has at most $9$ forbidden colors, since    $f(u_j)=f(v_i)=i$,  so we assign  an available color to $v$.  Therefore, we obtain a 2-distance coloring $f$ with  $\D+4=10$ colors, a contradiction.  It then follows that  $\{1,2,\ldots,5\} \subseteq f(N[v_i]-\{u_i\})$ for each $i\in [5]$.\medskip


$(b)$ Let $v$ be a bad $5$-vertex (see Figure \ref{fig:bad5-a}). Suppose that $v$ is weak-adjacent to no $5^-$-vertex, i.e., all $v_i$'s are $6$-vertices.   By minimality, $G-v$ has a 2-distance coloring $f$ with  $\D+4=10$ colors. We may assume without loss of generality that $f(N[v_1]-\{u_1\})=\{1,2,\ldots,6\}$. We start with decoloring the 2-vertices $u_1,u_2,\ldots,u_5$. 

Suppose first that there exist $i,j\in [5]$ with $i\neq j$ such that $f(v_i)=f(v_j)$. In such a case, we claim that $f(N(v_k)-\{u_k\})=f(N(v_r)-\{u_r\})$ for each $k,r\in [5]$. Indeed, if $c\in f(N(v_k)-\{u_k\}) \setminus f(N(v_r)-\{u_r\})$, then  we recolor $u_r$ with the color $c$ and we recolor the remaining  vertices of $N(v)$, except for $u_k$, in succession. Since $u_k$ has at most $9$ forbidden colors due to $f(u_r)\in f(N(v_k)-\{u_k\})$, we recolor it with an available color.  Finally, $v$ has at most $9$ forbidden colors since    $f(v_i)=f(v_j)$, and so we assign an available color to it.  Thus, we obtain a 2-distance coloring $f$ with  $\D+4=10$ colors, a contradiction. Hence, we have $f(N(v_k)-\{u_k\})=f(N(v_r)-\{u_r\})$ for each $k,r\in [5]$. Particularly, we have $\{1,2,\ldots,5\} \subseteq f(N[v_i]-\{u_i\})$ for  $i\in [5]$. 

Next, we suppose that  all $f(v_i)$'s are distinct for $i\in [5]$. Let $f(v_i)=a_i=i$ for $i\in [5]$. If $i\notin f(N[v_j]-\{u_j\})$ for some $i,j\in [5]$ with $i\neq j$, then we recolor $u_j$ with $i$, and we recolor the remaining vertices of $N(v)$ in succession. Finally, $v$ has at most $9$ forbidden colors since    $f(u_j)=f(v_i)=i$, and so we assign an available color to $v$.  Consequently, we obtain a 2-distance coloring $f$ with  $\D+4=10$ colors, a contradiction.  Hence,  $\{1,2,\ldots,5\} \subseteq f(N[v_i]-\{u_i\})$ for each $i\in [5]$. \medskip

%

$(c)$  Let $v$ be a semi-bad $5$-vertex (see Figure \ref{fig:bad5-b}). By minimality, $G-v$ has a 2-distance coloring $f$ with  $\D+4=10$ colors.

Suppose first that  $v_5$ is a $4$-vertex. We decolor the vertices $u_1,u_2,\ldots,u_5,z$. Note that $u_5$ currently has at most $5$ forbidden colors. Assume for a contradiction that there exist $i,j\in [5]$ with $i\neq j$ such that $f(v_i)=f(v_j)$. We recolor $u_1,u_2,u_3,u_4$ in succession. Now, consider the vertex $u_5$, since it has at most $9$ forbidden colors, we recolor $u_5$ with an available color.  Finally, $v$ has at most $9$ forbidden colors, since    $f(v_i)=f(v_j)$,  so we assign an available color to $v$. Besides, we recolor $z$ with an available by Corollary \ref{cor:2-vertex}.  Therefore, we obtain a 2-distance coloring $f$ with  $\D+4=10$ colors, a contradiction. This means that all $f(v_i)$'s are distinct for $i\in [5]$. Let $f(v_i)=a_i=i$ for $i\in [5]$.

We next suppose that $v$ is weak-adjacent to a $5$-vertex, say $v_1$. We may also assume that  $v_5$ is a $5$-vertex, since otherwise the claim follows from above result.
Assume for a contradiction that there exist $i,j\in [5]$ with $i\neq j$ such that $f(v_i)=f(v_j)$.  We prove the claim in a similar way as above.  Let us decolor  $u_1,u_2,\ldots,u_5$ and $z$. Note that  $v$ currently has at most $4$ forbidden colors since $f(v_i)=f(v_j)$. Now, we recolor $u_2,u_3,u_4,u_5$ in succession. Consider the vertex $u_1$, we give an available color to it, since it has at most $9$ forbidden colors due to $d(v_1)=5$. Finally, $v$ has at most $9$ forbidden colors because of $f(v_i)=f(v_j)$,  so we assign an available color to  $v$. Besides, we recolor $z$ with an available by Corollary \ref{cor:2-vertex}.  Thus, we obtain a 2-distance coloring $f$ with  $\D+4=10$ colors, a contradiction. This means that all $f(v_i)$'s are distinct for $i\in [5]$. Let $f(v_i)=a_i=i$ for $i\in [5]$.

Let us now show the rest of the claim.  Let $f(N[v_i]-\{u_i\})=C_i$ for $i\in [4]$ and let $f(N(v_5)-\{u_5\})\cup f(y)=C_5$.  We decolor the vertices $u_1,u_2,\ldots,u_5,z$. If $i\notin C_j$ for some $i,j\in [5]$ with $i\neq j$, then we recolor $u_j$ with $i$, and we recolor the vertices $N(v)-\{u_i,u_j\}$ in succession. Next, we give an available color to $u_i$, since it has at most $9$ forbidden colors due to $f(u_j)=f(v_i)=i$. By the same reason, we can assign an available color to  $v$. Besides, we recolor $z$ with an available by Corollary \ref{cor:2-vertex}.  Thus, we obtain a 2-distance coloring $f$ with  $\D+4=10$ colors, a contradiction. It then follows that  $\{1,2,\ldots,5\} \subseteq f(N[v_i]-\{u_i\})$ for each $i\in [4]$, and $\{1,2,\ldots,5\} \subseteq f(N[v_5]-\{u_5\})\cup f(y)$. 
\end{proof}

\begin{lem}\label{lem:bad-5-vertices-not-adjacent}
Let $\D=6$ and let $v$ be a bad or a semi-bad $5$-vertex. If $v$ is weak-adjacent (resp. star-adjacent) to a $5$-vertex (resp. $4$-vertex) $w$, then $v$ is not weak-adjacent to a bad or a semi-bad $5$-vertex $u$ with $u\neq w$.
\end{lem}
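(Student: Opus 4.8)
I would argue by contradiction, so suppose $v$ is weak-adjacent to a bad or semi-bad $5$-vertex $u$ with $u\neq w$; let $u_1$ be the $2$-vertex with $N(u_1)=\{u,v\}$ and, after relabelling, let $u_1,\dots,u_5$ be the neighbours of $v$ and $v_1,\dots,v_5$ the vertices at distance two from $v$ reached through $u_1,\dots,u_5$, so $v_1=u$ (and $u_5$ is the $3(1)$-neighbour when $v$ is semi-bad). Since $u$ is a bad or semi-bad $5$-vertex that is weak-adjacent to the $5$-vertex $v$, the hypothesis of Lemma~\ref{lem:5-vertex-color-class}(a) (if $u$ is bad) or of Lemma~\ref{lem:5-vertex-color-class}(c) (if $u$ is semi-bad) is met, with $v$ in the role of the distance-two vertex reached through $u_1$. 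Hence, by minimality, $G-u$ has a $2$-distance colouring $f$ as described there: the vertices at distance two from $u$ get pairwise distinct colours, and $\{1,\dots,5\}\subseteq f(N[v]-\{u_1\})$. As $d(v)=5$ the set $N[v]-\{u_1\}=\{v,u_2,u_3,u_4,u_5\}$ has exactly five elements, so $f$ uses precisely the colours $1,\dots,5$ on them; I normalise so that $f(v)=1$ and $\{f(u_2),f(u_3),f(u_4),f(u_5)\}=\{2,3,4,5\}$.

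I would then turn $f$ into a $2$-distance colouring of all of $G$, contradicting minimality. If $u$ already has an available colour in $f$ we are done, so assume not; since the colours on $N(u)$ are pairwise distinct and (by the lemma) so are the colours on the vertices at distance exactly two from $u$, the only way this can happen is that these two colour sets are disjoint, so the colouring is rigidly determined around both $u$ and $v$. In that situation I would pick a distance-two vertex $q$ of $u$ with $q\neq v$ (its colour then lies in $\{2,3,4,5\}$ and avoids every colour on $N(u)$), decolour $u_1$ together with the remaining $2$-neighbours of $v$ (and the pendant $2$-vertex at $v$'s $3(1)$-neighbour when $v$ is semi-bad, and the pendant $2$-vertex near $w$ when $w$ is star-adjacent), recolour $u_1$ with $f(q)$, keep $f(v)=1$, then recolour the decoloured $2$-neighbours of $v$ one at a time — treating last the one that leads to the helper $w$, where the low degree $d(w)\le 5$ provides the extra room needed to complete the recolouring, and invoking Corollary~\ref{cor:2-vertex} for the ones adjacent to a $3$-vertex; finally $u$ sees the colour $f(q)$ twice among its distance-$\le 2$ vertices and so has an available colour. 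This gives a $2$-distance colouring of $G$ with $\D+4$ colours, a contradiction.

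I expect the main obstacle to be the forbidden-colour bookkeeping, especially in the semi-bad subcases: when $u$ or $v$ is semi-bad the relevant degree sum is $11>\D+4$, so the crude counts do not close by themselves, and one has to appeal to the sharper clause $\{1,\dots,5\}\subseteq f(N[v_5]-\{u_5\})\cup f(y)$ of Lemma~\ref{lem:5-vertex-color-class}(c) (and the analogous statement at $u$) to locate the repeated colour that rescues the argument. This is precisely the step where the low-degree helper $w$ is indispensable — without it the rigid colour patterns forced at $u$ and at $v$ would remain mutually compatible and the recolouring would stall. The remainder is a routine but somewhat lengthy split according to whether each of $u$ and $v$ is bad or semi-bad and whether $w$ is weak- or star-adjacent, and these reduce to essentially one argument because Lemma~\ref{lem:5-vertex-color-class} is stated uniformly across them.
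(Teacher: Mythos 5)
There is a genuine gap, and it comes from the choice of which vertex to delete. You delete $u$ and apply Lemma~\ref{lem:5-vertex-color-class} at $u$; the containments $\{1,\dots,5\}\subseteq f(N[\cdot]-\{\cdot\})$ you thereby obtain concern only the second neighbourhoods of $u$'s weak-neighbours (namely $N[v]$ and $N[q]$ for the other $q$'s). They tell you nothing about $N[w]$ or about $N[v_j']$ for the other vertices $v_j'$ weak-adjacent to $v$ --- and those are exactly the sets you must control in order to recolour the $2$-neighbours of $v$. Concretely: to give $u_1$ the colour $f(q)\in\{2,3,4,5\}$ you must first displace the neighbour $u_{j_0}$ of $v$ that already carries that colour (and in the semi-bad case for $v$ this may even be the $3(1)$-neighbour $u_5$, which you do not decolour at all). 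That displaced vertex sees up to $1+4+6=11$ forbidden colours if its far endpoint is a $6$-vertex, and still $10$ if its far endpoint is $w$ with $d(w)\le 5$; with only $10$ colours available, no free colour is guaranteed, so the recolouring you describe can stall. The ``extra room from the low degree of $w$'' that you invoke yields $10$ forbidden out of $10$, which is not enough, and the sharper clause of Lemma~\ref{lem:5-vertex-color-class}(c) cannot rescue this because it is a statement about $u$'s weak-neighbours, not $v$'s.

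The paper's proof avoids this by deleting $v$ instead and applying Lemma~\ref{lem:5-vertex-color-class} at $v$. Then the containment is available for $N[w]$ itself, and since $w$ is a $5$-vertex (or a $4$-vertex reached via the $3(1)$-neighbour), $f(N[w]-\{v_1\})$ equals $\{1,\dots,5\}$ exactly; hence \emph{every} colour of $\{6,\dots,10\}$ is free for the $2$-vertex $v_1$ between $v$ and $w$. After decolouring $N(v)$ and $u$, the vertex $u$ has at most $8$ forbidden colours and can receive some $a\in\{6,\dots,10\}$; assigning the same $a$ to $v_1$ creates the repetition $f(u)=f(v_1)$ inside $N_2(v)$, the intermediate $2$-vertices recolour greedily, and $v$ then has at most $9$ forbidden colours. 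Your overall strategy (force a repeated colour in the second neighbourhood of the vertex that has none free) is the right one, but it must be anchored at $v$, where the degree hypothesis on $w$ converts the containment into an equality; anchored at $u$ the argument does not close.
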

\begin{proof}
Let $v$ be a bad $5$-vertex, and suppose that it is weak-adjacent to a $5$-vertex $w$. Assume to the contrary that $v$ is weak-adjacent to a bad $5$-vertex $u$ with $u\neq w$  as in Figure \ref{fig:adjacent5}. By Lemma \ref{lem:5-vertex-color-class}-(a), $G-v$ has a 2-distance coloring $f$ with $10$ colors such that the colors $f(v_1'),f(v_2'),f(v_3'),f(v_4'),f(u)$ are distinct, and $ \{1,2,\ldots,5\} \subseteq f(N[v_i']-\{v_i\})\cap f(N[u]-\{x\})$ for $i\in [4]$. We may assume that $f(v_i)=i$ for $i\in [4]$ and $f(u)=5$, and so $f(\{u_1,u_2,u_3,u_4\})=\{1,2,3,4\}$. In particular, $f(N[v_1']-\{v_1\})=f(N[u]-\{x\})=\{1,2,3,4,5\}$, since both $v_1'=w$ and $u$ are $5$-vertices. We then decolor $v_1,v_2,v_3,v_4,x,u$. Since $u$ currently has at most $8$ forbidden colors, it has two available colors, so we assign a color $a$ to $u$ for which $a \in \{6,7,\ldots,10\}$. Also, we assign the color $a$ to $v_1$, since all the colors in $\{6,7,\ldots,10\}$ currently are available for $v_1$. We next recolor the vertices $v_2,v_3,v_4,x$ in succession. Finally, we give an available color to $v$, since it has at most $9$ forbidden colors due to $f(u)=f(v_1)$.  Consequently, $G$ has a 2-distance coloring with $10$ colors, a contradiction.

Notice that if $v$ is star-adjacent to a $4$-vertex $w=v_1'$, i.e., $v$ is a semi-bad vertex, then we decolor the 2-vertex $z$ (see Figure \ref{fig:adjacent5-semi}) and apply the same argument as above. It follows from Corollary \ref{cor:2-vertex} that $G$ has a 2-distance coloring with $10$ colors, a contradiction.
\end{proof}

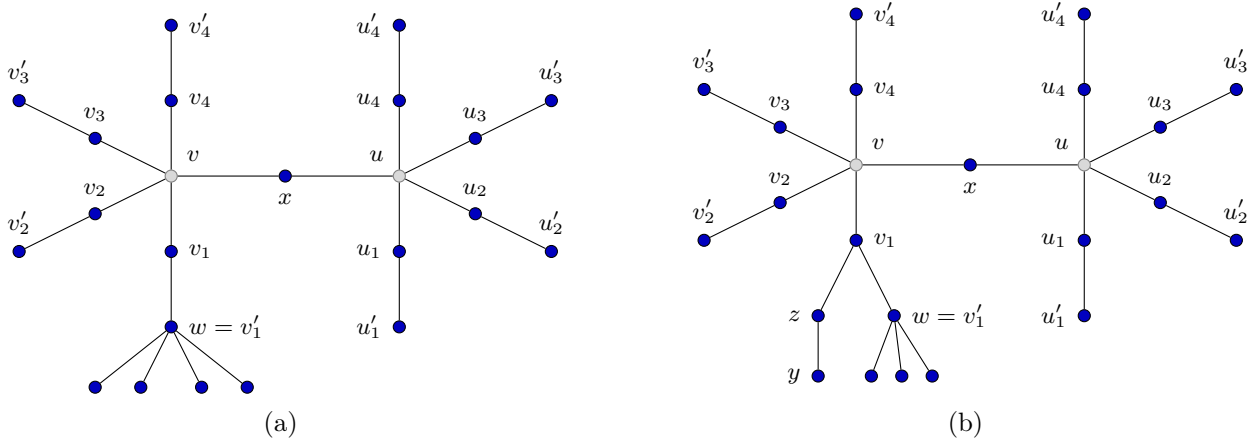
\begin{figure}[htb]
\centering   
\subfigure[]{
\label{fig:adjacent5}
\begin{tikzpicture}[scale=1]
\node [nodr] at (0,0) (v) [label=above right:{\scriptsize $v$}] {};
\node [nod2] at (0,-1) (v1) [label=right:{\scriptsize $v_1$}] {}
	edge (v);
\node [nod2] at (0,-2) (v11) [label=right: {\scriptsize $w=v_1'$}] {}
	edge  (v1);
\node [nod2] at (-1,-2.8) (v112)  {}
	edge  (v11);	
\node [nod2] at (-.4,-2.8) (v112)  {}
	edge  (v11);	
\node [nod2] at (.4,-2.8) (v112)  {}
	edge  (v11);	
\node [nod2] at (1,-2.8) (v112)  {}
	edge  (v11);				
\node [nod2] at (-1,-.5) (v2) [label=above:{\scriptsize $v_2$}] {}
	edge [] (v);		
\node [nod2] at (-2,-1) (v22) [label=above:{\scriptsize $v_2'$}] {}
	edge [] (v2);
\node [nod2] at (-1,.5) (v3)  [label=above:{\scriptsize $v_3$}] {}
	edge [] (v);
\node [nod2] at (-2,1) (v33) [label=above:{\scriptsize $v_3'$}]  {}
	edge [] (v3);
\node [nod2] at (0,1) (v4) [label=right:{\scriptsize $v_4$}]  {}
	edge [] (v);	
\node [nod2] at (0,2) (v44) [label=right:{\scriptsize $v_4'$}]  {}
	edge [] (v4);	
\node [nod2] at (1.5,0) (x) [label=below:{\scriptsize $x$}] {}
	edge (v);
\node [nodr] at (3,0) (u) [label=above left:{\scriptsize $u$}] {}
	edge (x);	
\node [nod2] at (3,-1) (u1) [label=left:{\scriptsize $u_1$}] {}
	edge (u);
\node [nod2] at (3,-2) (u11) [label=left:{\scriptsize $u_1'$}] {}
	edge  (u1);
\node [nod2] at (4,-.5) (u2) [label=above:{\scriptsize $u_2$}] {}
	edge [] (u);		
\node [nod2] at (5,-1) (u22) [label=above:{\scriptsize $u_2'$}] {}
	edge [] (u2);
\node [nod2] at (4,.5) (u3)  [label=above:{\scriptsize $u_3$}] {}
	edge [] (u);
\node [nod2] at (5,1) (u33) [label=above:{\scriptsize $u_3'$}]  {}
	edge [] (u3);
\node [nod2] at (3,1) (u4) [label=left:{\scriptsize $u_4$}]  {}
	edge [] (u);	
\node [nod2] at (3,2) (u44) [label=left:{\scriptsize $u_4'$}]  {}
	edge [] (u4);																		
\end{tikzpicture}  }\hspace*{1cm}
\subfigure[]{
\label{fig:adjacent5-semi}
\begin{tikzpicture}[scale=1]
\node [nodr] at (0,0) (v) [label=above right:{\scriptsize $v$}] {};
\node [nod2] at (0,-1) (v1) [label=right:{\scriptsize $v_1$}] {}
	edge (v);
\node [nod2] at (-.5,-2) (z) [label=left: {\scriptsize $z$}] {}
	edge  (v1);
\node [nod2] at (-.5,-2.8) (y) [label=left: {\scriptsize $y$}] {}
	edge  (z);
\node [nod2] at (0.5,-2) (v11) [label=right: {\scriptsize $w=v_1'$}] {}
	edge  (v1);	
\node [nod2] at (.2,-2.8) (v112)  {}
	edge  (v11);	
\node [nod2] at (.6,-2.8) (v112)  {}
	edge  (v11);	
\node [nod2] at (1,-2.8) (v112)  {}
	edge  (v11);					
\node [nod2] at (-1,-.5) (v2) [label=above:{\scriptsize $v_2$}] {}
	edge [] (v);		
\node [nod2] at (-2,-1) (v22) [label=above:{\scriptsize $v_2'$}] {}
	edge [] (v2);
\node [nod2] at (-1,.5) (v3)  [label=above:{\scriptsize $v_3$}] {}
	edge [] (v);
\node [nod2] at (-2,1) (v33) [label=above:{\scriptsize $v_3'$}]  {}
	edge [] (v3);
\node [nod2] at (0,1) (v4) [label=right:{\scriptsize $v_4$}]  {}
	edge [] (v);	
\node [nod2] at (0,2) (v44) [label=right:{\scriptsize $v_4'$}]  {}
	edge [] (v4);	
\node [nod2] at (1.5,0) (x) [label=below:{\scriptsize $x$}] {}
	edge (v);
\node [nodr] at (3,0) (u) [label=above left:{\scriptsize $u$}] {}
	edge (x);	
\node [nod2] at (3,-1) (u1) [label=left:{\scriptsize $u_1$}] {}
	edge (u);
\node [nod2] at (3,-2) (u11) [label=left:{\scriptsize $u_1'$}] {}
	edge  (u1);
\node [nod2] at (4,-.5) (u2) [label=above:{\scriptsize $u_2$}] {}
	edge [] (u);		
\node [nod2] at (5,-1) (u22) [label=above:{\scriptsize $u_2'$}] {}
	edge [] (u2);
\node [nod2] at (4,.5) (u3)  [label=above:{\scriptsize $u_3$}] {}
	edge [] (u);
\node [nod2] at (5,1) (u33) [label=above:{\scriptsize $u_3'$}]  {}
	edge [] (u3);
\node [nod2] at (3,1) (u4) [label=left:{\scriptsize $u_4$}]  {}
	edge [] (u);	
\node [nod2] at (3,2) (u44) [label=left:{\scriptsize $u_4'$}]  {}
	edge [] (u4);																		
\end{tikzpicture}  }
\caption{$(a)$ Two adjacent bad $5$-vertices. $(b)$ A semi-bad $5$-vertex adjacent to a bad $5$-vertex.}
\label{fig:adjacent}
\end{figure}

%
%

\begin{lem}\label{lem:bad-vertices-not-adjacent}
Let $\D=6$. Then the followings hold.
\begin{itemize} 
\item[$(a)$]  A bad $4$-vertex cannot be adjacent to a bad or a semi-bad vertex.
\item[$(b)$] Two semi-bad $4$-vertices cannot be adjacent.
\item[$(c)$] A bad or a semi-bad $5$-vertex is weak-adjacent to at most one 5-vertex which is a bad or a semi-bad $5$-vertex. 
\end{itemize}
\end{lem}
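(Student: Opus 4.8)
The plan is to dispose of the three assertions one at a time, using the colour‑class information of Lemmas~\ref{lem:4-vertex-color-class}--\ref{lem:5-vertex-color-class} for (a)--(b) and Lemma~\ref{lem:bad-5-vertices-not-adjacent} for (c). Assertion (c) is immediate: if a bad or semi‑bad $5$-vertex $v$ were weak‑adjacent to two distinct bad or semi‑bad $5$-vertices $u$ and $w$, then, since $w$ is in particular a $5$-vertex and $v$ is weak‑adjacent to it, Lemma~\ref{lem:bad-5-vertices-not-adjacent} applied with this $w$ forbids $v$ from being weak‑adjacent to any bad or semi‑bad $5$-vertex other than $w$, contradicting the presence of $u$. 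Hence $v$ is weak‑adjacent to at most one bad or semi‑bad $5$-vertex.

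For (a), note that the only neighbour of a bad $4$-vertex $v$ that is not a $2$-vertex is its distinguished neighbour $u_4$, whereas every bad or semi‑bad vertex has degree $4$ or $5$; so the hypothetical bad/semi‑bad neighbour must be $u_4$. A bad $5$-vertex is a $5(5)$-vertex and a semi‑bad $5$-vertex has four $2$-neighbours and one $3(1)$-neighbour, so neither can be adjacent to the $4$-vertex $v$; thus $u_4$ is a bad or semi‑bad $4$-vertex, and $v$ is forced to be the distinguished $4$-neighbour of $u_4$. Apply Lemma~\ref{lem:4-vertex-color-class}(c) to $v$: $G-v$ has a $2$-distance colouring $f$ with $f(N[v_i]-\{u_i\})=\{1,\dots,6\}$ for $i\in[3]$, with $f(v_1),\dots,f(v_6)$ pairwise distinct and $f(v_1),f(v_2),f(v_3)\in\{1,\dots,6\}$, and with $\{1,\dots,6\}\subseteq(f(N[v_4])\cup f(N[v_5])\cup f(N[v_6]))-f(u_4)$. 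Since each $u_i$ ($i\in[3]$) is a pendant neighbour of $v_i$ in $G-v$ and $v\notin N(v_i)$, we get $f(u_i)\notin\{1,\dots,6\}$, and a short argument with the containment gives $f(u_4)\notin\{1,\dots,6\}$. If $\{f(v_1),\dots,f(v_6)\}\ne\{1,\dots,6\}$ or $\{f(u_1),\dots,f(u_4)\}\ne\{7,\dots,10\}$, then some colour is still available at $v$ and we extend $f$ to $G$, a contradiction. So we may assume the \emph{rigid} situation: $\{f(u_1),\dots,f(u_4)\}=\{7,\dots,10\}$, the triples $\{f(v_1),f(v_2),f(v_3)\}$ and $\{f(v_4),f(v_5),f(v_6)\}$ partition $\{1,\dots,6\}$, and (again by the containment) the three $6$-vertices reached from $u_4$ through its $2$-neighbours carry exactly the colours $\{f(v_1),f(v_2),f(v_3)\}$.

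In this rigid situation we uncolour the ball $N[v]$ together with one $2$-neighbour $w_1$ of $u_4$, and recolour, in order, $w_1$, then $u_4$, then $u_1,u_2,u_3$, then $v$: recolouring $w_1$ (which now sees only the six colours around the $6$-vertex $w_1'$ and a couple of others, so it has room) shifts a colour so that $u_4$ can take a colour of $\{1,\dots,6\}$ — equivalently $w_1$ can take a colour of $\{f(v_1),f(v_2),f(v_3)\}$, releasing a colour of $\{f(v_4),f(v_5),f(v_6)\}$; afterwards $u_1,u_2,u_3$ receive colours of $\{7,\dots,10\}$ chosen so that exactly one colour is left for $v$, and colouring $v$ finishes a $2$-distance colouring of $G$ with $\D+4$ colours, a contradiction. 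If $u_4$ is semi‑bad rather than bad, one of its $2$-neighbours is replaced by a $3(1)$-vertex and one applies Corollary~\ref{cor:2-vertex} at the end; the argument is otherwise identical. Assertion (b) follows the same pattern: two adjacent semi‑bad $4$-vertices $v,v'$ are, each having degree $4$, forced to be each other's distinguished $4$-neighbour, so Lemma~\ref{lem:4-vertex-color-class}(a) applied to $v$ yields the analogous rigid structure, and the same uncolour‑and‑recolour procedure around $v'$ (which has two $2$-neighbours leading to $6$-vertices and one $3(1)$-neighbour) produces a colouring of $G$, a contradiction.

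The step I expect to be the main work is the rigid case of (a) and its analogue in (b): one has to verify that the shift‑and‑recolour step can always be completed, which requires a short case analysis according to which colours of $\{7,\dots,10\}$ occur in the neighbourhood of the $6$-vertex $w_1'$, together with a careful bookkeeping of the colours then assigned to $w_1$, $u_4$, $u_1,u_2,u_3$ and $v$. The role of the hypothesis is clean: $v$ being bad/semi‑bad makes the colouring of its radius‑$2$ ball completely locked, while the neighbour $u_4$ (resp.\ $v'$) being bad/semi‑bad supplies the $2$-vertices hanging off $6$-vertices that provide exactly the slack needed to unlock it.
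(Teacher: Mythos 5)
Your proposal is correct and follows essentially the same route as the paper: part (c) is read off directly from Lemma~\ref{lem:bad-5-vertices-not-adjacent}, and in parts (a)--(b) you reduce the offending neighbour to the distinguished $4$-neighbour $u_4$, invoke the rigidity supplied by Lemma~\ref{lem:4-vertex-color-class}, and break it by recolouring one $2$-neighbour of $u_4$ before recolouring greedily back to $v$ --- which is exactly the paper's argument. The one caveat is that your parenthetical ``equivalently $w_1$ can take a colour of $\{f(v_1),f(v_2),f(v_3)\}$'' is not quite right (all three of those colours may already occur around $w_1'$); what actually closes the argument --- and what the paper does --- is the two-case split on whether $w_1$'s new colour lands in $\{1,\dots,6\}$ (forcing a repetition in $N_2(v)$) or in $\{7,\dots,10\}$ (in which case that colour is duplicated on one of $u_1,u_2,u_3$), a case analysis you correctly flag as the remaining work.
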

\begin{proof}
$(a)$ Let $v$ be a bad 4-vertex. Obviously, $v$ cannot be adjacent to a bad or semi-bad $5$-vertex by its definition.   Assume for a contradiction that $v$ is adjacent a bad or a semi-bad $4$-vertex $u$. Let $u_i$, $v_i$ be $2$-neighbour of $u,v$, and let the other neighbour of $u_i$, $v_i$ be $u_i'$, $v_i'$ for $i\in [3]$, respectively. An illustration is depicted in Figure \ref{fig:adjacentbad4} where a bad $4$-vertex $v$ is adjacent to a bad $4$-vertex $u$ in (a) and a semi-bad $4$-vertex $u$ in (b). Notice that $u_1'$ and $u_2'$ are $6$-vertices, and if $u$ is a bad (resp. semi-bad) $4$-vertex, then $u_3'$ is a $6$-vertex (resp. $5$-vertex).

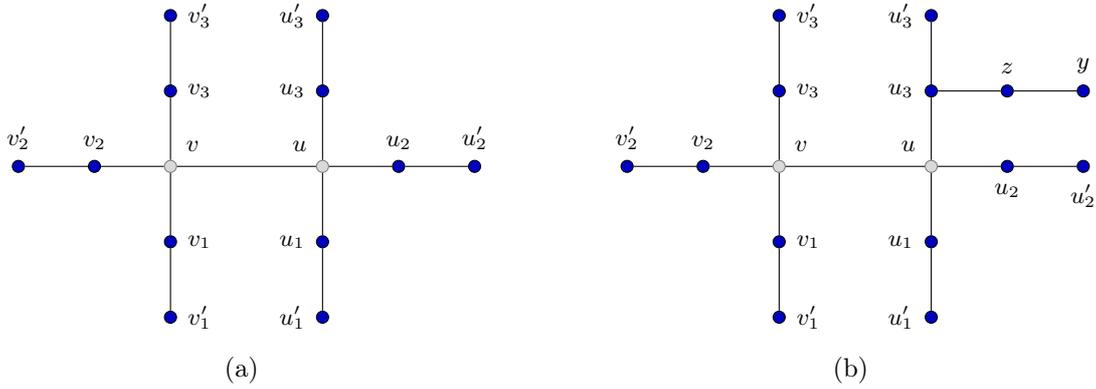
\begin{figure}[htb]
\centering   
\subfigure[]{
\label{fig:bad4ad-a}
\begin{tikzpicture}[scale=1]
\node [nodr] at (0,0) (v) [label=above right:{\scriptsize $v$}] {};
\node [nod2] at (0,-1) (v1) [label=right:{\scriptsize $v_1$}] {}
	edge (v);
\node [nod2] at (0,-2) (v11) [label=right:{\scriptsize $v_1'$}] {}
	edge  (v1);				
\node [nod2] at (-1,0) (v2) [label=above:{\scriptsize $v_2$}] {}
	edge [] (v);		
\node [nod2] at (-2,0) (v22) [label=above:{\scriptsize $v_2'$}] {}
	edge [] (v2);
\node [nod2] at (0,1) (v3) [label=right:{\scriptsize $v_3$}]  {}
	edge [] (v);	
\node [nod2] at (0,2) (v33) [label=right:{\scriptsize $v_3'$}]  {}
	edge [] (v3);	
\node [nodr] at (2,0) (u) [label=above left:{\scriptsize $u$}] {}
	edge (v);	
\node [nod2] at (2,-1) (u1) [label=left:{\scriptsize $u_1$}] {}
	edge (u);
\node [nod2] at (2,-2) (u11) [label=left:{\scriptsize $u_1'$}] {}
	edge  (u1);
\node [nod2] at (3,0) (u2) [label=above:{\scriptsize $u_2$}] {}
	edge [] (u);		
\node [nod2] at (4,0) (u22) [label=above:{\scriptsize $u_2'$}] {}
	edge [] (u2);
\node [nod2] at (2,1) (u3) [label=left:{\scriptsize $u_3$}]  {}
	edge [] (u);	
\node [nod2] at (2,2) (u33) [label=left:{\scriptsize $u_3'$}]  {}
	edge [] (u3);																		
\end{tikzpicture}  }\hspace*{1cm}
\subfigure[]{
\label{fig:bad4ad-b}
\begin{tikzpicture}[scale=1]
\node [nodr] at (0,0) (v) [label=above right:{\scriptsize $v$}] {};
\node [nod2] at (0,-1) (v1) [label=right:{\scriptsize $v_1$}] {}
	edge (v);
\node [nod2] at (0,-2) (v11) [label=right:{\scriptsize $v_1'$}] {}
	edge  (v1);				
\node [nod2] at (-1,0) (v2) [label=above:{\scriptsize $v_2$}] {}
	edge [] (v);		
\node [nod2] at (-2,0) (v22) [label=above:{\scriptsize $v_2'$}] {}
	edge [] (v2);
\node [nod2] at (0,1) (v3) [label=right:{\scriptsize $v_3$}]  {}
	edge [] (v);	
\node [nod2] at (0,2) (v33) [label=right:{\scriptsize $v_3'$}]  {}
	edge [] (v3);	
\node [nodr] at (2,0) (u) [label=above left:{\scriptsize $u$}] {}
	edge (v);	
\node [nod2] at (2,-1) (u1) [label=left:{\scriptsize $u_1$}] {}
	edge (u);
\node [nod2] at (2,-2) (u11) [label=left:{\scriptsize $u_1'$}] {}
	edge  (u1);
\node [nod2] at (3,0) (u2) [label=below:{\scriptsize $u_2$}] {}
	edge [] (u);		
\node [nod2] at (4,0) (u22) [label=below:{\scriptsize $u_2'$}] {}
	edge [] (u2);
\node [nod2] at (2,1) (u3) [label=left:{\scriptsize $u_3$}]  {}
	edge [] (u);	
\node [nod2] at (2,2) (u33) [label=left:{\scriptsize $u_3'$}]  {}
	edge [] (u3);
\node [nod2] at (3,1) (z) [label=above:{\scriptsize $z$}]  {}
	edge [] (u3);	
\node [nod2] at (4,1) (y) [label=above:{\scriptsize $y$}]  {}
	edge [] (z);																				
\end{tikzpicture}  }
\caption{Two adjacent $4$-vertices.}
\label{fig:adjacentbad4}
\end{figure}

By Lemma \ref{lem:4-vertex-color-class}(c), $G-v$ has a 2-distance coloring $f$ with $10$ colors such that the colors $f(v_1'),f(v_2'),$ $f(v_3'),f(u_1),f(u_2),f(u_3)$ are distinct, and $f(N[v_i']-\{v_i\}) = \{1,2,\ldots,6\}$ for $i\in [3]$. So we may assume that $f(v_i')=i$ and $f(u_j)=3+j$ for $i,j\in [3]$. We decolor  $v_1,v_2,v_3,u,u_3$. If $u$ is a semi-bad $4$-vertex (see Figure \ref{fig:bad4ad-b}), we also decolor $z$. 
Since $u_3$ has at most $8$ forbidden colors, it has two available colors, so we assign a color $a\neq 6$ to $u_3$. If $a\in \{1,2,3\}$, then we recolor $u,v_1,v_2,v_3$ in succession, and we give an available color to $v$, since it has at most $9$ forbidden colors.  If $a\in \{7,8,9,10\}$, then we recolor $v_1$ with $a$, and we recolor $u,v_2,v_3,v$ in succession. Besides, if $u$ is a semi-bad $4$-vertex,  we recolor $z$ with an available by Corollary \ref{cor:2-vertex}.  Consequently, $G$ has a 2-distance coloring with $10$ colors, a contradiction.\medskip

$(b)$ Assume to the contrary that $v$ and $u$ are adjacent semi-bad $4$-vertices. Let $u_i$ (resp. $v_i$) be $2$-neighbour of $u$ (resp. $v$), and let the other neighbour of $u_i$, $v_i$ be $u_i'$, $v_i'$ for $i\in [3]$, respectively (see Figure \ref{fig:adjacentsemibad4}).  Notice that $u_1',u_2',v_1',v_2'$ are $6$-vertices, and $u_3',v_3'$ are $5$-vertices.
By Lemma \ref{lem:4-vertex-color-class}(a), $G-v$ has a 2-distance coloring $f$ with $10$ colors such that the colors $f(v_1'),f(v_2'),f(v_3'),f(u_1),f(u_2)$, $f(u_3)$ are distinct, and $f(N[v_i']-\{v_i\}) \subseteq \{1,2,\ldots,6\}$ for $i\in [3]$. So we may assume that $f(v_i')=i$ and $f(u_j)=3+j$ for $i,j\in [3]$. Let us decolor  $v_1,v_2,v_3,u,u_3,z_1,z_2$. 
Since $u_3$ has at most $8$ forbidden colors, it has two available colors, so we assign a color $a\neq 6$ to $u_3$. If $a\in \{1,2,3\}$, then we recolor $u,v_1,v_2,v_3$ in succession, and we give an available color to $v$, since it has at most $9$ forbidden colors.  If $a\in \{7,8,9,10\}$, then we recolor $v_1$ with $a$, and we recolor $u,v_2,v_3,v$ in succession. Finally, we recolor $z_1$ and $z_2$ with some available colors by Corollary \ref{cor:2-vertex}. Consequently, $G$ has a 2-distance coloring with $10$ colors, a contradiction. \medskip

\begin{figure}[htb]
\begin{tikzpicture}[scale=1]
\node [nodr] at (0,0) (v) [label=above right:{\scriptsize $v$}] {};
\node [nod2] at (0,-1) (v1) [label=right:{\scriptsize $v_1$}] {}
	edge (v);
\node [nod2] at (0,-2) (v11) [label=right:{\scriptsize $v_1'$}] {}
	edge  (v1);				
\node [nod2] at (-1,0) (v2) [label=below:{\scriptsize $v_2$}] {}
	edge [] (v);		
\node [nod2] at (-2,0) (v22) [label=below:{\scriptsize $v_2'$}] {}
	edge [] (v2);
\node [nod2] at (0,1) (v3) [label=right:{\scriptsize $v_3$}]  {}
	edge [] (v);	
\node [nod2] at (0,2) (v33) [label=right:{\scriptsize $v_3'$}]  {}
	edge [] (v3);
\node [nod2] at (-1,1) (z1) [label=above:{\scriptsize $z_1$}]  {}
	edge [] (v3);	
\node [nod2] at (-2,1) (y1) [label=above:{\scriptsize $y_1$}]  {}
	edge [] (z1);			
\node [nodr] at (2,0) (u) [label=above left:{\scriptsize $u$}] {}
	edge (v);	
\node [nod2] at (2,-1) (u1) [label=left:{\scriptsize $u_1$}] {}
	edge (u);
\node [nod2] at (2,-2) (u11) [label=left:{\scriptsize $u_1'$}] {}
	edge  (u1);
\node [nod2] at (3,0) (u2) [label=below:{\scriptsize $u_2$}] {}
	edge [] (u);		
\node [nod2] at (4,0) (u22) [label=below:{\scriptsize $u_2'$}] {}
	edge [] (u2);
\node [nod2] at (2,1) (u3) [label=left:{\scriptsize $u_3$}]  {}
	edge [] (u);	
\node [nod2] at (2,2) (u33) [label=left:{\scriptsize $u_3'$}]  {}
	edge [] (u3);
\node [nod2] at (3,1) (z2) [label=above:{\scriptsize $z_2$}]  {}
	edge [] (u3);	
\node [nod2] at (4,1) (y2) [label=above:{\scriptsize $y_2$}]  {}
	edge [] (z2);																				
\end{tikzpicture}  
\caption{Two adjacent semi-bad $4$-vertices.}
\label{fig:adjacentsemibad4}
\end{figure}
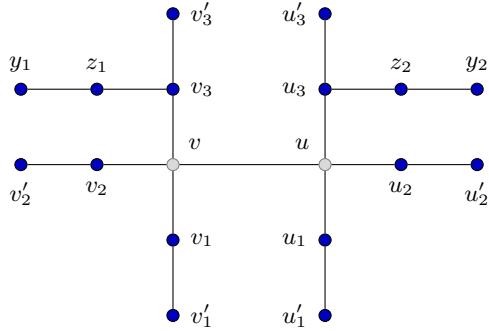


$(c)$ The claim follows from Lemma \ref{lem:bad-5-vertices-not-adjacent}.
\end{proof}

The following is an immediate consequence of Lemma \ref{lem:bad-vertices-not-adjacent}, since two bad or semi-bad vertices cannot be adjacent.

\begin{cor}\label{cor:at-most-lf/2-bad-and-semi-bad}
If $\D=6$, then any face $f$ has at most $\floor[\Big]{ \frac{\ell(f)}{2}}$ bad and semi-bad vertices.
\end{cor}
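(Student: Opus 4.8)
The plan is to derive the corollary from two ingredients: the (asserted) fact that $G$ has no edge joining two bad or semi-bad vertices, and an elementary independent-set bound along the boundary of a face.

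First I would make the non-adjacency statement explicit, since Lemma \ref{lem:bad-vertices-not-adjacent}(c) is phrased via weak-adjacency and one must return to the definitions. A bad or semi-bad $5$-vertex has, by definition, all of its neighbours among $2$-vertices and at most one $3(1)$-vertex; as bad and semi-bad vertices are $4$- or $5$-vertices, none of these neighbours is bad or semi-bad, so such a vertex has no bad/semi-bad neighbour at all. A bad or semi-bad $4$-vertex $v$ likewise has all neighbours among $2$-vertices, a possible $3(1)$-vertex, and the distinguished $4$- or $5$-vertex $u_4$; thus the only candidate for a bad/semi-bad neighbour of $v$ is $u_4$. If $d(u_4)=5$, then $u_4$ cannot be bad or semi-bad because such a $5$-vertex has only $2$-neighbours and (at most) one $3(1)$-neighbour, whereas $v$ is a $4$-vertex. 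If $d(u_4)=4$, then: when $v$ is bad, Lemma \ref{lem:bad-vertices-not-adjacent}(a) forbids $u_4$ from being bad or semi-bad; when $v$ is semi-bad and $u_4$ is bad, part (a) applied to $u_4$ gives a contradiction; and when both are semi-bad $4$-vertices, part (b) applies. Hence no edge of $G$ joins two bad or semi-bad vertices.

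Next I would run the face-counting argument. Let $f$ be any face and write its boundary walk as $w_0 w_1\cdots w_{\ell-1}w_0$ with $\ell=\ell(f)$, where consecutive vertices on the walk are adjacent in $G$. By the previous step, no two consecutive positions of this closed walk can both carry a bad or semi-bad vertex, so the set of positions occupied by such vertices is an independent set in the cyclic order on $\{0,1,\dots,\ell-1\}$, and therefore has at most $\floor{\ell/2}=\floor{\ell(f)/2}$ elements. Since each bad or semi-bad vertex incident with $f$ occupies at least one position of the walk, the number of distinct bad and semi-bad vertices on $f$ is at most $\floor{\ell(f)/2}$, which is the assertion.

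I do not anticipate a real obstacle: the only step needing care is the first one, namely spelling out why the "weak-adjacency" formulation of Lemma \ref{lem:bad-vertices-not-adjacent} together with the definitions of bad and semi-bad vertices rules out a genuine edge between two such vertices; once that is in place, the face-boundary count is completely standard (and uses only $g(G)\geq 6$ implicitly, through $\ell(f)\geq 6$, to make the bound meaningful).
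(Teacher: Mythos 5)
Your proposal is correct and follows essentially the same route as the paper, which derives the corollary in one line from the fact that two bad or semi-bad vertices cannot be adjacent (Lemma \ref{lem:bad-vertices-not-adjacent}) together with the standard independent-set count along the face boundary. Your more careful case analysis of why no genuine edge can join two such vertices (using the definitions for the $5$-vertex cases and parts (a), (b) of the lemma for the $4$-vertex cases) is a faithful expansion of what the paper leaves implicit.
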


When $\ell(f)=6$, we can strengthen the statement in Corollary \ref{cor:at-most-lf/2-bad-and-semi-bad} as follows.

\begin{lem}\label{lem:6-face-at-most-3-bad-and-semi-bad}
If $\D=6$, then a face $f$ with $\ell(f)=6$ has at most two bad or semi-bad vertices.
\end{lem}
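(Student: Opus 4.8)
The plan is to argue by contradiction: suppose a $6$-face $f=x_1x_2x_3x_4x_5x_6$ carries three bad or semi-bad vertices. By Corollary~\ref{cor:at-most-lf/2-bad-and-semi-bad} a $6$-face has at most three such vertices, so the three of them would have to occupy alternate positions on $f$, say $x_1,x_3,x_5$, and no two of them are adjacent. First I would record what the vertices between them look like: since $g\geq 6$, $f$ has no chords, and each of $x_1,x_3,x_5$ is a $4$- or $5$-vertex that is weak-adjacent to several $5^+$- or $6$-vertices along edges leaving $f$. The vertices $x_2,x_4,x_6$ on $f$ are the ``other'' endpoints of the edges $x_1x_2,x_2x_3$, etc.; a crucial point is that each of $x_1,x_3,x_5$ being bad or semi-bad forces, via Lemma~\ref{lem:delta6-special-not-weak-adj} and the definitions of bad/semi-bad vertices, that its neighbours on $f$ are fairly high degree, so $x_2,x_4,x_6$ cannot be $2$-vertices and in fact must be among the $5^+$- or $6$-vertices listed in the bad/semi-bad structure.

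Next I would invoke the structural colouring lemmas. For whichever of $x_1,x_3,x_5$ are bad or semi-bad $5$-vertices, Lemma~\ref{lem:5-vertex-color-class} gives a $2$-distance colouring $f$ of $G$ minus that vertex with prescribed behaviour on the ``$v_i$'' vertices; similarly Lemma~\ref{lem:4-vertex-color-class} handles the bad/semi-bad $4$-vertices. The idea is the one already used repeatedly in Lemma~\ref{lem:bad-5-vertices-not-adjacent} and Lemma~\ref{lem:bad-vertices-not-adjacent}: delete one of the three special vertices, say $x_1$, obtain the colouring with the strong ``$\{1,\dots,5\}\subseteq f(N[v_i]-\{u_i\})$'' (or the $\{1,\dots,6\}$ analogue) property, then decolour the relevant pendant $2$-vertices and the shared neighbours along $f$, and re-extend. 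Because $x_3$ and $x_5$ are themselves special and sit two edges away from $x_1$ along $f$, their pendant $2$-vertices and their own colours interact with $x_1$ only through the common $f$-vertices $x_2,x_6$ and through $x_3,x_5$ themselves; the counting from the earlier lemmas shows these vertices always retain an available colour, and finally $x_1$ gets a colour because its closed second neighbourhood uses at most $9$ colours (here one uses $D(x_1)=10<\Delta+4$ in the type IV / bad $5$-vertex case, or the repetition $f(u_j)=f(v_i)$ trick otherwise). This produces a $2$-distance colouring of $G$ with $\Delta+4=10$ colours, contradicting that $G$ is a counterexample.

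The main obstacle I expect is bookkeeping: there are several sub-cases according to whether each of $x_1,x_3,x_5$ is a bad $4$-, semi-bad $4$-, bad $5$-, or semi-bad $5$-vertex, and according to the degrees of the $f$-vertices $x_2,x_4,x_6$ and which of the ``$v_i$'' roles they play for each special vertex. The key reductions that cut this down are: (i) Lemma~\ref{lem:delta6-special-not-weak-adj} forbids a special vertex of type I/II from being weak-adjacent to a $5^-$-vertex and a special vertex of type III/IV from being weak-adjacent to a $4^-$-vertex, which pins down that the $f$-neighbours of each special vertex are high-degree; (ii) Lemma~\ref{lem:bad-vertices-not-adjacent} already tells us no two of the three are adjacent, which is why they must be $x_1,x_3,x_5$; and (iii) the earlier adjacency lemmas (Lemma~\ref{lem:bad-5-vertices-not-adjacent}, Lemma~\ref{lem:type2-vertex5}) limit how a special vertex can be weak-adjacent to another special vertex. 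So the realistic plan is: fix the positions $x_1,x_3,x_5$, observe $x_2,x_4,x_6$ are forced to be $5^+$- or $6$-vertices, pick $x_1$, apply the appropriate structural colouring lemma to $G-x_1$, decolour $\{x_2,x_6\}$ together with the pendant $2$-vertices attached to $x_1$ (and to $x_3,x_5$ if those decolourings are needed to free space), greedily recolour outward, and close by colouring $x_1$ last. I would present the argument once in detail for the ``worst'' case (three bad $5$-vertices, so $x_1$ of type IV) and then indicate that the other cases are identical after the cosmetic step of additionally decolouring the star-adjacent pendant $2$-vertices and re-invoking Corollary~\ref{cor:2-vertex}, exactly as in the proofs of Lemmas~\ref{lem:bad-5-vertices-not-adjacent} and~\ref{lem:bad-vertices-not-adjacent}.
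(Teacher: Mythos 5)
Your reduction to three pairwise non-adjacent special vertices sitting at alternate positions $x_1,x_3,x_5$ of the $6$-face is correct and matches the starting point of the paper's argument. However, the structural claim you build everything on --- that $x_2,x_4,x_6$ ``cannot be $2$-vertices and in fact must be among the $5^+$- or $6$-vertices listed in the bad/semi-bad structure'' --- is false, and it appears to come from confusing the $u_i$'s with the $v_i$'s in Figures \ref{fig:bad5} and \ref{fig:bad4}. The high-degree vertices $v_i$ in the definition of a bad or semi-bad vertex are at distance two from it, reached through the pendant $2$-vertices $u_i$; the actual neighbours of a bad or semi-bad vertex are almost all $2$-vertices or a $3(1)$-vertex. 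Concretely, a bad $5$-vertex has five $2$-neighbours, a semi-bad $5$-vertex has four $2$-neighbours and one $3(1)$-neighbour, and a bad or semi-bad $4$-vertex has at most one neighbour of degree at least $4$. Since each of $x_1,x_3,x_5$ has two face-neighbours among $x_2,x_4,x_6$, at least one of those two is necessarily a $2$-vertex or a $3(1)$-vertex, so the configuration in which all of $x_2,x_4,x_6$ are high-degree never occurs. This removes the foundation of the recolouring plan, which in any case is only sketched (``the counting from the earlier lemmas shows these vertices always retain an available colour'' is not a checkable step, and the interaction between the three deleted/decoloured neighbourhoods is exactly where the difficulty would lie).

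The correct route --- and the one the paper takes --- is purely structural and needs no new colouring. Precisely because $x_2,x_4,x_6$ are forced to be $2$-vertices, $3(1)$-vertices, or (only when an adjacent special vertex is a $4$-vertex) its single $4$- or $5$-neighbour, the three special vertices end up pairwise weak-adjacent or star-adjacent, and a short case analysis on the degrees of $x_1,x_3,x_5$ contradicts Lemma \ref{lem:delta6-special-not-weak-adj} (a type I/II vertex is not weak-adjacent to any $5^-$-vertex, a type III/IV vertex not to any $4^-$-vertex), Lemma \ref{lem:bad-5-vertices-not-adjacent} (a bad or semi-bad $5$-vertex cannot be weak-adjacent to two bad or semi-bad $5$-vertices), and the fact that a $5(4)$- or $5(5)$-vertex has at most one $3$-neighbour. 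If you correct the description of $x_2,x_4,x_6$, the contradiction falls out of the lemmas you already cite, without deleting or recolouring anything.
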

\begin{proof}
Let $u$ and $v$ be bad or semi-bad vertices belonging to a face $f$ with $\ell(f)=6$. Note that two bad or semi bad vertices cannot be adjacent by Lemma \ref{lem:bad-vertices-not-adjacent}. If $u$ and $v$ have no common neighbour then the claim follows from the fact that $\ell(f)=6$. So we may assume that $u$ and $v$ have a common neighbour $w$.
It is sufficient to show that $f$ has no any other bad or semi bad vertices except for $u$ and $v$.
Assume for a contradiction that $f$ has a bad or a semi-bad vertex $z$ with $z\notin \{u,v\}$. Clearly $z$ is a $4$ or $5$-vertex, in addition, $z$ has a common neighbour with each of $v,u$.   
We will divide the rest of the proof based on the degrees of vertices $u$ and $v$.

Let $u$ and $v$ be $4$-vertices. By Lemma \ref{lem:delta6-special-not-weak-adj}-(1), $u$ and $v$ cannot be weak-adjacent. Additionally,  $u$ and $v$ cannot be star-adjacent as well, since $d(v_3)=5$ in Figure \ref{fig:bad4-b}. Therefore, the common neighbour $w$ has to be $4$ or $5$-vertex, and clearly $w=u_4$ as in Figure \ref{fig:bad4}. If one of $\{u,v\}$ is a bad $4$-vertex, say $v$, then $v$ is not weak-adjacent to a $5^-$-vertex by Lemma \ref{lem:delta6-special-not-weak-adj}-(1), however $z$ is a $4$ or $5$-vertex, a contradiction. Thus we assume that both $u$ and $v$ are semi-bad vertices. Then the vertex $z$ should be $v_3$ as in Figure \ref{fig:bad4-b}, it is of degree $5$. However, a bad or a semi bad $5$-vertex $z$ cannot be weak-adjacent to a $4^-$-vertex by Lemma \ref{lem:delta6-special-not-weak-adj}-(2), a contradiction.

Let $u$ and $v$ be $4$-vertex and $5$-vertex, respectively. This forces that $u$  is a semi-bad $4$-vertex, and $v$ is a semi-bad $5$-vertex such that $u$ and $v$ are star-adjacent by definition.  It follows from Lemma \ref{lem:delta6-special-not-weak-adj} that $z$ is a $5$-vertex. However, $v$ cannot be weak-adjacent to a bad or a semi-bad $5$-vertex $z$ while it is star-adjacent to the $4$-vertex $u$ by Lemma \ref{lem:bad-5-vertices-not-adjacent},  a contradiction.


Finally, let $u$ and $v$ be $5$-vertices. If $w$ is a $3$-vertex, then both $u$ and $v$ are semi-bad $5$-vertices. It follows that all neighbours of $u$ and $v$ except for $w$ are $2$-vertices. We then deduce  that $z$ is a $5$-vertex, and it is weak-adjacent to both $u$ and $v$. However, a bad or a semi-bad $5$-vertex $z$ cannot be weak-adjacent to two bad or semi-bad $5$-vertices $u$ and $v$ by Lemma \ref{lem:bad-vertices-not-adjacent}, a contradiction. We further assume that $w$ is a $2$-vertex. By Lemma \ref{lem:bad-5-vertices-not-adjacent}, $z$ cannot be weak-adjacent to any of $u,v$. This means that
$u$ and $v$ are semi-bad $5$-vertices, and the common neighbour of each pair $\{z,u\}$ and $\{z,v\}$ is a $3$-vertex. However, it is a contradiction with the fact that a bad or semi-bad $5$-vertex $z$ can have at most  one $3$-neighbour.  As a consequence, $f$ has at most two bad or semi bad vertices.
\end{proof}

\begin{lem}\label{lem:7-face-lf-3/2}
Let  $\D = 6$, and let $P=v_1u_1vu_2v_2$ be a path such that $v$ is a bad or a semi-bad vertex and  $u_i$ is a 2-vertex for $i=1,2$. If a face $f$ with $\ell(f)\geq 7$ contains the path $P$, then $f$ has at most $\ceil[\Big]{ \frac{\ell(f)-3}{2}}$ bad or semi-bad vertices. In particular, if $v_1,v_2$ are $6$-vertices, then $f$ has at most $\ceil[\Big]{ \frac{\ell(f)-5}{2}}$ bad or semi-bad vertices.
\end{lem}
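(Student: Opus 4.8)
The statement is essentially a counting lemma for bad and semi-bad vertices along a face $f$ with $\ell(f)\ge 7$ that contains a specified path $P=v_1u_1vu_2v_2$ with $v$ bad or semi-bad and $u_1,u_2$ being $2$-vertices. The natural approach is to walk around the boundary of $f$ and argue that bad/semi-bad vertices cannot be too densely packed, using the adjacency restrictions already established: by Lemma~\ref{lem:bad-vertices-not-adjacent} two bad or semi-bad vertices are never adjacent, and by Lemma~\ref{lem:delta6-special-not-weak-adj} and Lemma~\ref{lem:bad-5-vertices-not-adjacent} a bad/semi-bad vertex is not weak-adjacent to the "wrong" low-degree vertices. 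The key point for the path $P$ is that $v$ itself is weak-adjacent to its neighbours along $f$ through the $2$-vertices $u_1,u_2$, so the two vertices $v_1,v_2$ on $f$ adjacent (via $u_1,u_2$) to $v$ are constrained: they are $5^+$-vertices (when $v$ is a $4$-vertex of type bad/semi-bad) or $6$-vertices / $5$-vertices depending on the type, and in particular in the generic case $v_1,v_2$ cannot be bad or semi-bad themselves.

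First I would set up the boundary walk of $f$ as a cyclic sequence of length $\ell:=\ell(f)$ and mark the positions of bad/semi-bad vertices. The three interior vertices of $P$, namely $u_1, v, u_2$, occupy three consecutive positions on this walk, and only the middle one $v$ can be bad/semi-bad (the $u_i$ are $2$-vertices). So on the arc consisting of these three positions there is exactly one bad/semi-bad vertex, and it "uses up" the positions $u_1$ and $u_2$ on either side of it. The remaining $\ell-3$ positions of the boundary form a path $Q$ (the complement of the three interior vertices of $P$ in the cyclic boundary), whose endpoints are $v_1$ and $v_2$. On $Q$, I claim that bad/semi-bad vertices can occupy at most $\ceil{(\ell-3)/2}$ positions: since no two bad/semi-bad vertices are adjacent, between any two consecutive ones on $Q$ there is a gap of at least one vertex, and a path on $m$ vertices with an independent set of marked vertices can hold at most $\ceil{m/2}$ of them. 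Applying this with $m=\ell-3$ gives the first bound. The mild subtlety is that a bad/semi-bad vertex on $Q$ could in principle coincide with $v_1$ or $v_2$; but that is fine for the first bound since we only need an upper bound on an independent set in a path of $\ell-3$ vertices, and $\ceil{(\ell-3)/2}$ already counts possible endpoints.

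For the sharper bound when $v_1$ and $v_2$ are $6$-vertices, the idea is that the endpoints of $Q$ are then \emph{not} bad or semi-bad: a $6$-vertex is never a bad or semi-bad vertex by definition (those are $4$- or $5$-vertices only). So both endpoints of the $(\ell-3)$-vertex path $Q$ are forced to be unmarked, which reduces the maximum size of an independent set of marked vertices from $\ceil{(\ell-3)/2}$ down to $\ceil{(\ell-5)/2}$ — indeed, deleting the two unmarked endpoints leaves a path on $\ell-5$ vertices, and an independent set there has size at most $\ceil{(\ell-5)/2}$. I would also need to double-check the boundary cases where the face is short, e.g. $\ell=7$, to make sure $Q$ genuinely has $\ell-3=4\ge 2$ vertices so that "deleting two endpoints" makes sense; for $\ell=7$ this gives at most $\ceil{4/2}=2$ in general and $\ceil{2/2}=1$ when $v_1,v_2$ are $6$-vertices, which matches.

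**Where the difficulty lies.** The genuinely delicate part is not the counting — that is just "independent set in a path" — but making sure the path-versus-cycle bookkeeping is airtight: a face boundary is a closed walk, and a bad/semi-bad vertex could in principle appear twice on it, or $v_1$ might equal $v_2$ in a very short face, or $Q$ might wrap in a way that its two endpoints are actually adjacent on $f$. I would handle this by first invoking connectivity and $\delta(G)\ge 2$ together with $g\ge 6$ to rule out repeated vertices on a face boundary of length $\ell\le$ girth-ish size, or more simply by treating the boundary as a genuine cycle (which is standard for $2$-connected plane graphs and which one can reduce to here), and then the argument above goes through verbatim. So the main obstacle is a careful but routine reduction to the clean combinatorial picture "an independent set of marked vertices on a path of $\ell-3$ (resp. $\ell-5$) vertices," after which both bounds are immediate.
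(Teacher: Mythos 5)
Your reduction to ``an independent set of marked vertices on a path'' is the right framework, but the count is off by one in both claims, and the missing unit is exactly where the content of the lemma lies. The vertex $v$ is itself a bad or semi-bad vertex of $f$ and does not lie on your path $Q$, so the total number of bad or semi-bad vertices on $f$ is $1$ plus the number on $Q$. Your bound of $\ceil{(\ell-3)/2}$ for $Q$ therefore yields $1+\ceil{(\ell-3)/2}$ for $f$, which is just the trivial bound $\floor{\ell/2}$ of Corollary~\ref{cor:at-most-lf/2-bad-and-semi-bad}, not the claimed $\ceil{(\ell-3)/2}$. Concretely, for $\ell=7$ your argument permits $v$ plus two marked vertices on the $4$-vertex path $Q=v_2s_2s_1v_1$, i.e.\ three in total, whereas the lemma asserts at most two. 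The same slip occurs in the second claim: with only the endpoints $v_1,v_2$ excluded you get $1+\ceil{(\ell-5)/2}=\ceil{(\ell-3)/2}$, which is merely the first bound, not the improvement.

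To close the gap you must block more positions of $Q$ than non-adjacency alone provides, and this is where the structural lemmas enter (as in the paper's proof). If $v$ is a bad or semi-bad $4$-vertex, then $v_1,v_2$ are $6$-vertices by definition; since no bad or semi-bad vertex has a $6^+$-neighbour, the four positions $v_1,v_2,s_1,s_2$ (where $s_i$ is the face-neighbour of $v_i$ other than $u_i$) are all unmarked, leaving $\ell-7$ free positions and a total of $1+\ceil{(\ell-7)/2}=\ceil{(\ell-5)/2}$. If $v$ is a bad or semi-bad $5$-vertex, then $v_1,v_2$ are $5^+$-vertices, and Lemma~\ref{lem:bad-5-vertices-not-adjacent} shows that if one of them is bad or semi-bad then the other cannot even be a $5$-vertex, hence is a $6$-vertex whose face-neighbour is again blocked; the resulting case analysis gives $2+\ceil{(\ell-7)/2}=\ceil{(\ell-3)/2}$ when one of $v_1,v_2$ is marked and $1+\ceil{(\ell-5)/2}=\ceil{(\ell-3)/2}$ otherwise. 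Likewise, the ``in particular'' statement needs the observation that $s_1,s_2$ are adjacent to the $6$-vertices $v_1,v_2$ and so cannot be bad or semi-bad; without deleting those two extra positions the improvement to $\ceil{(\ell-5)/2}$ does not follow.
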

\begin{proof}
Let $s_i\in f$ be a neighbour of $v_i$ apart from $u_i$ for $i=1,2$.
If $d(v)=4$, then  $v_1$ and $v_2$ are $6$-vertices by definition, and so we deduce that none of $v_1,v_2,s_1,s_2,u_1,u_2$ can be a bad or semi-bad vertex. Then the claim follows from Lemma \ref{lem:bad-vertices-not-adjacent}.  We further suppose that $v$ is a bad or a semi-bad $5$-vertex. 
Similarly as above, $v_1,v_2$ are $5^+$-vertices by definition. If $v_i$ is  a bad or a semi-bad $5$-vertex, then $v_{3-i}$ cannot be $5$-vertex for $i\in \{1,2\}$ by Lemma \ref{lem:bad-5-vertices-not-adjacent}, so it should be $6^+$-vertex. Notice that a $6^+$-vertex and its neighbours cannot be bad or semi-bad vertices. Thus the claim follows from Lemma \ref{lem:bad-vertices-not-adjacent}. 
Finally we suppose that none of $v_1,v_2$ is a bad or a semi-bad vertex. Then the claim follows from Lemma \ref{lem:bad-vertices-not-adjacent} with the fact that  $\ceil[\Big]{ \frac{\ell(f)-3}{2}}=\ceil[\Big]{ \frac{\ell(f)-5}{2}}+1$.
\end{proof}

We now address some properties of $6$-faces incident to a bad or a semi-bad vertex in order to avoid some configurations.

\begin{lem}\label{lem:partial-face-4-vertex-deg-2}
Let  $\D = 6$, and let $F = \{f_1,f_2\}$ be the partial face set incident to a bad
4-vertex $v$ in Figure \ref{fig:Fbad4-a}, $F_2 = \{f_1\}$ be the partial face set incident to a semi-bad
$4$-vertex $v$ in Figure \ref{fig:Fbad4-b}. If $f_i \in F_1 \ or \ F_2$,  then $s_i\in f_i$ must be
$3^+$-vertex for $i=1,2$, and it is not $3(1)$-vertex. In particular, $s_i$ is not bad or semi-bad vertex.
\end{lem}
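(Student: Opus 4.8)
\emph{The last assertion first, since it needs no reducibility.} By construction each $s_i$ is a neighbour of the $6$-vertex $v_i$ that terminates the weak-adjacency path $v u_i v_i$ (see Figures~\ref{fig:Fbad4-a} and \ref{fig:Fbad4-b}). Running through the four types of special vertex one checks that the neighbours of a bad or semi-bad vertex consist of $2$-vertices together with at most one $3(1)$-vertex and at most one $4$- or $5$-vertex; in particular a bad or semi-bad vertex never has a $6$-neighbour. Since $s_i$ is adjacent to the $6$-vertex $v_i$, it is therefore neither bad nor semi-bad, and it remains to show that $s_i$ is a $3^+$-vertex that is not a $3(1)$-vertex.

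\emph{Set-up for the reduction.} Assume, for a contradiction, that some $s_i$ — say $s_1$ — is a $2$-vertex or a $3(1)$-vertex. Because $v_1$ is a $6$-vertex we have $|N[v_1]-\{u_1\}|=6$, so the relevant part of Lemma~\ref{lem:4-vertex-color-class} applies: parts (c),(d) when $v$ is a bad $4$-vertex and parts (a),(b) when $v$ is semi-bad, the inclusion in (a),(b) being forced to be an equality at $v_1$ for degree reasons. We obtain a $2$-distance colouring $f$ of $G-v$ with $\D+4=10$ colours such that $f(N[v_1]-\{u_1\})=\{1,\ldots,6\}$. Consequently $f(u_1)\in\{7,8,9,10\}$, and $f(u_1),f(u_2),f(u_3)$ are three distinct colours among $\{7,8,9,10\}$.

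\emph{The recolouring.} The plan is to liberate a colour of $\{1,\ldots,6\}$ at $v_1$, push it onto $u_1$, and use the released high colour $f(u_1)$ to colour $v$. If $s_1$ is a $3(1)$-vertex, first decolour its $2$-neighbour $t$; then decolour $s_1$ (and $u_2,u_3$), so that the colour $c_0:=f(s_1)\in\{1,\ldots,6\}$ no longer occurs on $N(v_1)$. Recolour $u_1$ with $c_0$: its forbidden set is now $(\{1,\ldots,6\}\setminus\{c_0\})\cup\{f(u_2),f(u_3),f(u_4)\}$, and $c_0\notin\{f(u_2),f(u_3)\}$, so this is legal unless $c_0=f(u_4)$, a degenerate sub-case handled below. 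Now $f(u_1)$ no longer occurs on $N_2(v)$: using that the $f(v_j)$, $j\in[6]$, are pairwise distinct and — when $u_4$ is a $4$-vertex — that $\{1,\ldots,6\}\subseteq\big(f(N[v_4])\cup f(N[v_5])\cup f(N[v_6])\big)-f(u_4)$, one checks that $f(u_1)$ avoids $f(N_2(v)\setminus\{u_1\})$. Colour $v$ with $f(u_1)$, recolour $u_2$ and $u_3$ (each with at most $9$ forbidden colours), and finally recolour the decoloured $2$-vertices: $t$ (if present) by Corollary~\ref{cor:2-vertex}, and $s_1$ from the fact that, after the recolouring, its distance-$2$ neighbourhood already carries $\{1,\ldots,6\}$ on the $v_1$-side while the $6$-cycle bounding $f_1$ forces enough repetitions on the other side. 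This produces a $2$-distance colouring of $G$ with $10$ colours, the desired contradiction. When $v$ is semi-bad the argument is identical except that the $2$-vertex hanging off the $3(1)$-neighbour of $v$ is also decoloured at the start and restored at the end via Corollary~\ref{cor:2-vertex}.

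\emph{The main obstacle.} The whole difficulty lies in the bookkeeping. One must split according to $d(u_4)\in\{4,5\}$ and invoke the matching part of Lemma~\ref{lem:4-vertex-color-class}; one must treat the bad and semi-bad cases separately; and — the genuinely delicate point — one must handle the sub-case $c_0=f(u_4)$, which is where the $6$-face $f_1$ is really exploited: it supplies a common neighbour of $s_1$ and $u_4$ through which the colour to be recycled can be routed instead of through $u_1$. Likewise, checking that every decoloured $2$-vertex — in particular $s_1$ itself, which is adjacent to a $6$-vertex and so lies outside the scope of Corollary~\ref{cor:2-vertex} — still has an available colour after all the changes is what forces one to use the full structure of the $6$-cycle bounding $f_1$.
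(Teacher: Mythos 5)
Your first paragraph (that $s_i$ is adjacent to the $6$-vertices $v_i,v_{i+1}$ and hence cannot be bad or semi-bad) matches the paper and is fine, as is the use of Lemma \ref{lem:4-vertex-color-class} to force $f(N[v_1]-\{u_1\})=\{1,\ldots,6\}$ and $f(s_1)\in\{1,\ldots,6\}$. The reduction itself, however, has a genuine gap at its central step. Your plan is to recolour $u_1$ with $c_0=f(s_1)$ and then give the released colour $a=f(u_1)\in\{7,\ldots,10\}$ to $v$. But nothing in Lemma \ref{lem:4-vertex-color-class} controls the colours of $u_4$ and of $u_4$'s other neighbours $v_4,v_5,v_6$ (and $v_7$, and $z,y$ in the semi-bad case): the conclusion $\{1,\ldots,6\}\subseteq (f(N[v_4])\cup f(N[v_5])\cup f(N[v_6]))-f(u_4)$ only says each low colour occurs \emph{somewhere} in the second neighbourhood of $u_4$; it does not place $f(v_4),f(v_5),f(v_6)$ or $f(u_4)$ inside $\{1,\ldots,6\}$. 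Since these vertices lie in $N_2(v)$ and are never decoloured, any of them may already carry the colour $a$, and then $v$ cannot receive it. Your fallback for the sub-case $c_0=f(u_4)$ is also unsound: the $6$-face $f_1=v_1u_1vu_2v_2s_1$ does not contain $u_4$ at all, so it cannot "supply a common neighbour of $s_1$ and $u_4$"; the neighbours of $s_1$ on $f_1$ are $v_1$ and $v_2$. (A smaller slip: in $G-v$ the vertices $u_1,u_2,u_3$ are no longer pairwise at distance $\le 2$, so their colours need not be distinct.)

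The colour-recycling detour is also unnecessary. The paper's proof simply decolours $s_1,u_1,u_2,u_3$ (and the $2$-neighbour of $s_1$ if $s_1$ is a $3(1)$-vertex), observes that $v$ then has at most $9$ forbidden colours and colours it greedily, recolours $u_3,u_2,u_1$ in succession (each with at most $9$ forbidden colours at its turn), and only at the very end recolours $s_1$: since $N_2(s_1)\subseteq N[v_1]\cup N[v_2]$ and $f(N[v_i]-\{u_i\})\subseteq\{1,\ldots,6\}$ for $i=1,2$, the vertex $s_1$ sees at most $5+2=7$ colours (resp.\ $8$ when it is a $3(1)$-vertex), so it always has an available colour. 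That last counting argument is the real content of the lemma — it is where the two $6$-neighbours $v_1,v_2$ and the constraint from Lemma \ref{lem:4-vertex-color-class} are actually used — and it is the part your write-up only gestures at. To repair your proof, drop the swap of $f(u_1)$ onto $v$, colour $v$ greedily after the decolouring, and make the forbidden-colour count at $s_1$ explicit.
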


\begin{figure}[htb]
\centering   
\subfigure[]{
\label{fig:Fbad4-a}
\begin{tikzpicture}[scale=1]
\node [nodr] at (0,0) (v) [label=above:{\scriptsize $v$}] {};
\node [nod2] at (-1,-1) (u1) [label=above:{\scriptsize $u_1$}] {}
	edge (v);
\node [nod2] at (-2,-1) (v1) [label=above:{\scriptsize $v_1$}] {}
	edge  (u1);
\node [nod2] at (-1,1) (u2) [label=below:{\scriptsize $u_2$}] {}
	edge [] (v);		
\node [nod2] at (-2,1) (v2) [label=above:{\scriptsize $v_2$}] {}
	edge [] (u2);
\node [nod2] at (1,1) (u3)  [label=below:{\scriptsize $u_3$}] {}
	edge [] (v);
\node [nod2] at (2,1) (v3) [label=above:{\scriptsize $v_3$}]  {}
	edge [] (u3);
\node [nod2] at (1,-1) (u4) [label=above:{\scriptsize $u_4$}] {}
	edge [] (v);
\node [nod2] at (2,0) (v7) [label=right:{\scriptsize $v_7$}] {}
	edge [dotted] (u4);	
\node [nod2] at (2,-.65) (v6)  [label=right:{\scriptsize $v_6$}]  {}
	edge [] (u4);
\node [nod2] at (2,-1.3) (v5)  [label=right:{\scriptsize $v_5$}]  {}
	edge [] (u4);
\node [nod2] at (2,-2) (v4)  [label=right:{\scriptsize $v_4$}] {}
	edge [] (u4);
\node [nod2] at (-3,0) (s1)  [label=left:{\scriptsize $s_1$}] {}
	edge [] (v1)
	edge [] (v2);	
\node [nod2] at (0,2) (s2)  [label=above:{\scriptsize $s_2$}] {}
	edge [] (v2)
	edge [] (v3);
\node at (-1.5,0) (v11)  {\scriptsize $f_1$};	
\node at (0,1.2) (v22)  {\scriptsize $f_2$};								
\end{tikzpicture}  }\hspace*{1cm}
\subfigure[]{
\label{fig:Fbad4-b}
\begin{tikzpicture}[scale=1]
\node [nodr] at (0,0) (v) [label=above:{\scriptsize $v$}] {};
\node [nod2] at (-1,-1) (u1) [label=above:{\scriptsize $u_1$}] {}
	edge (v);
\node [nod2] at (-2,-1) (v1) [label=above:{\scriptsize $v_1$}] {}
	edge  (u1);
\node [nod2] at (-1,1) (u2) [label=below:{\scriptsize $u_2$}] {}
	edge [] (v);		
\node [nod2] at (-2,1) (v2) [label=above:{\scriptsize $v_2$}] {}
	edge [] (u2);
\node [nod2] at (1,1) (u3)  [label=below:{\scriptsize $u_3$}] {}
	edge [] (v);
\node [nod2] at (2,1.4) (v3) [label=above:{\scriptsize $v_3$}]  {}
	edge [] (u3);
\node [nod2] at (2,.6) (z) [label=above:{\scriptsize $z$}]  {}
	edge [] (u3);	
\node [nod2] at (3,.6) (y) [label=above:{\scriptsize $y$}]  {}
	edge [] (z);		
\node [nod2] at (1,-1) (u4) [label=above:{\scriptsize $u_4$}] {}
	edge [] (v);
\node [nod2] at (2,0) (v7) [label=right:{\scriptsize $v_7$}] {}
	edge [dotted] (u4);	
\node [nod2] at (2,-.65) (v6)  [label=right:{\scriptsize $v_6$}]  {}
	edge [] (u4);
\node [nod2] at (2,-1.3) (v5)  [label=right:{\scriptsize $v_5$}]  {}
	edge [] (u4);
\node [nod2] at (2,-2) (v4)  [label=right:{\scriptsize $v_4$}] {}
	edge [] (u4);	
\node [nod2] at (-3,0) (s1)  [label=left:{\scriptsize $s_1$}] {}
	edge [] (v1)
	edge [] (v2);
\node [nod2] at (0,2) (s2)  [label=above:{\scriptsize $s_2$}] {}
	edge [] (v2)
	edge [] (v3);	
\node at (-1.5,0) (v11)  {\scriptsize $f_1$};
\end{tikzpicture}  }
\caption{Partial faces incident to a bad $4$-vertex $(a)$ and a semi-bad $4$-vertex $(b)$.}
\label{fig:Fbad4}
\end{figure}
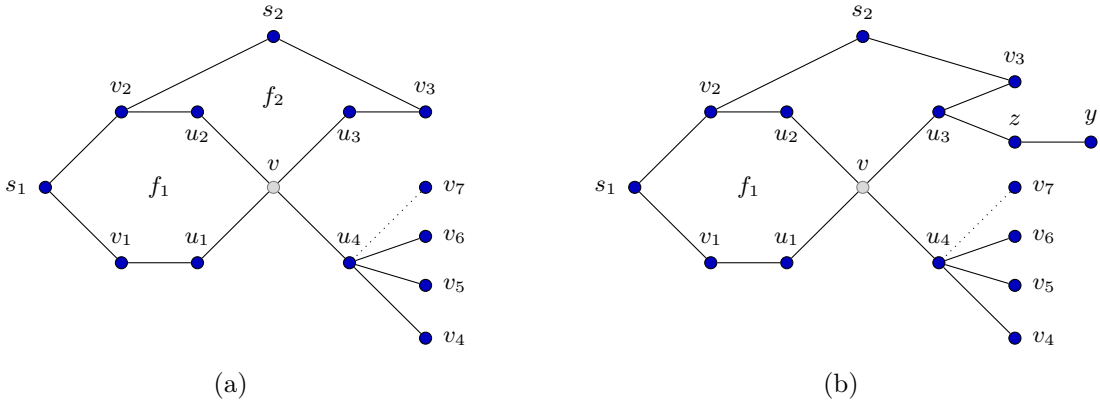

\begin{proof}
Assume to the contrary that $s\in \{s_1,s_2\}$ is a $2$-vertex or a $3(1)$-vertex. By minimality,  $G-v$ has a $2$-distance coloring $f$ with $10$ colors, and we may assume that $f(N[v_i]-\{u_i\}) \subseteq \{1,2,\ldots,6\}$ for  $i\in [3]$ by Lemma~\ref{lem:4-vertex-color-class}(a)-(d), so $f(s) \in \{ 1,2,\ldots,6\}$. Assume without loss of generality that $f_i = f_1= v_1u_1vu_2v_2s_1$. Recall that $d(u_1)=d(u_2)=2$, and the vertices $v_1,v_2,v_3$ in Figure~\ref{fig:Fbad4-a} and $v_1,v_2$ in Figure~\ref{fig:Fbad4-b} are $6$-vertices  by definition. We decolor $s_1,u_1,u_2,u_3$. In particular, if $s_1$ is a $3(1)$-vertex, then we also decolor the $2$-neighbour of $s_1$. 
Notice that $v$ has currently at most $9$ forbidden colors, so we give an available color to it.
We later recolor $u_3,u_2,u_1$ in succession, respectively. Finally, let us consider the vertex $s_1$, if $d(s_1)=2$, then  $s_1$ has  at most $7$ forbidden colors due to $f(N[v_i]-\{u_i\}) \subseteq \{1,2,\ldots,6\}-f(s_1)$ for $i\in \{1,2\}$, so we recolor it. Otherwise, if $s_1$ is a $3(1)$-vertex, then $s_1$ has  at most $8$ forbidden colors, so we recolor $s_1$ and the $2$-neighbour of $s_1$ in succession.  This provides a 2-distance coloring of $G$ with $10$ colors, a contradiction. On the other hand, $s_1$ cannot be bad or semi-bad vertex, since it has two $6$-neighbours.
\end{proof}

\begin{lem}\label{lem:partial-face-4-vertex-deg-3(1)}
Let  $\D = 6$, and let $f'=v_2u_2vu_3v_3s_2$ be a face incident to a semi-bad
4-vertex $v$ in Figure \ref{fig:Fbad4-b}. Then $s_2$ must be $3^+$-vertex, and it is not $3(1)$-vertex. In particular, $s_2$ is not bad or semi-bad vertex.
\end{lem}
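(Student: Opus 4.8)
The plan is to argue by contradiction: suppose that $s_2$ is a $2$-vertex or a $3(1)$-vertex, and produce a $2$-distance coloring of $G$ with $\Delta+4=10$ colors. Recall that, for the semi-bad $4$-vertex $v$ of Figure~\ref{fig:Fbad4-b}, the vertices $v_1,v_2$ are $6$-vertices, $v_3$ is a $5$-vertex, $u_1,u_2$ are $2$-vertices, $u_3$ is the $3(1)$-neighbour of $v$ with $2$-neighbour $z$, and $u_4$ is a $4$- or $5$-vertex. By minimality, $G-v$ has a $2$-distance coloring $f$ with $10$ colors, and by Lemma~\ref{lem:4-vertex-color-class}(a)--(b) we may assume $f(N[v_i]-\{u_i\})\subseteq\{1,\dots,6\}$ for $i\in[3]$; since $v_1,v_2$ are $6$-vertices, the set $N[v_i]-\{u_i\}$ has six vertices with pairwise distinct colors, so $f(N[v_i]-\{u_i\})=\{1,\dots,6\}$ for $i=1,2$, and in particular $f(s_2)\in\{1,\dots,6\}$ since $s_2\in N[v_2]-\{u_2\}$.

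I would then decolor $s_2$, the vertices $u_1,u_2,u_3$, and $z$, and also decolor the $2$-neighbour of $s_2$ when $s_2$ is a $3(1)$-vertex. Now $v$ has at most $9$ forbidden colors (among its neighbours only $u_4$ is colored, and the colored vertices at distance $2$ from $v$ are $v_1,v_2,v_3$ together with the at most four neighbours of $u_4$), so I color $v$. Next I recolor $u_3$, then $u_2$, then $u_1$, in that order; after this, $z$ has an available color by Corollary~\ref{cor:2-vertex}, being a $2$-vertex with the $3$-neighbour $u_3$ whose remaining second neighbourhood is now colored. Finally I recolor $s_2$: it is adjacent to $v_2$ and $v_3$ (and possibly to a $2$-vertex), and the colors forbidden for $s_2$ arising from $N[v_2]-\{u_2,s_2\}$ together with $N[v_3]-\{u_3,s_2\}$ all lie in $\{1,\dots,6\}\setminus\{f(s_2)\}$, so adding the colors of $u_2$ and $u_3$ (and, when $s_2$ is a $3(1)$-vertex, the far endpoint of its $2$-neighbour) leaves $s_2$ with at most $7$ forbidden colors if $d(s_2)=2$ and at most $8$ if $s_2$ is a $3(1)$-vertex; then $s_2$, and after it the $2$-neighbour of $s_2$ by Corollary~\ref{cor:2-vertex}, can be colored. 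This produces a $2$-distance coloring of $G$ with $10$ colors, a contradiction. The last sentence of the statement is then immediate: a bad or semi-bad vertex has no $6$-neighbour, since its neighbours are $2$-vertices, $3(1)$-vertices, and $4$- or $5$-vertices only, whereas $s_2$ is adjacent to the $6$-vertex $v_2$.

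The step that needs care is the recoloring of $u_1$. In Lemma~\ref{lem:partial-face-4-vertex-deg-2} the decolored vertex $s_1$ lies in $N(v_1)\cap N(v_2)$ and thus relaxes the constraints on both $u_1$ and $u_2$, but here the decolored vertex $s_2$ meets only $v_2$ and $v_3$, so it is of no help to $u_1$; since $D(u_1)=d(v)+d(v_1)=10$, a naive count would allow $u_1$ up to $10$ forbidden colors. The way around this is the color-class accounting already used in Lemma~\ref{lem:4-vertex-color-class}: from $f(N[v_1]-\{u_1\})=\{1,\dots,6\}$ every available color for $u_1$ lies in $\{7,8,9,10\}$, and the only further restrictions on $u_1$ are the colors $f(v),f(u_2),f(u_3),f(u_4)$, so it is enough that these four colors do not cover $\{7,8,9,10\}$. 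One arranges this by recoloring $u_3$ and then $u_2$ with colors from $\{1,\dots,6\}$ whenever the four colors of $N[v_3]-\{u_3,s_2\}$, respectively the five colors of $N[v_2]-\{u_2,s_2\}$, leave room; in the residual situations one checks, treating $u_4$ a $4$-vertex (Lemma~\ref{lem:4-vertex-color-class}(a)) and $u_4$ a $5$-vertex (Lemma~\ref{lem:4-vertex-color-class}(b)) separately, that $f(v),f(u_2),f(u_3),f(u_4)$ cannot be four distinct elements of $\{7,8,9,10\}$, because adjacency forces two of them to coincide or one of them into $\{1,\dots,6\}$.
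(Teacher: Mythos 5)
Your overall strategy is the paper's: assume $s_2$ is a $2$- or $3(1)$-vertex, invoke Lemma~\ref{lem:4-vertex-color-class}(a)--(b) to place $f(N[v_i]-\{u_i\})$ inside $\{1,\dots,6\}$, decolor $s_2,u_1,u_2,u_3,z$ (and the $2$-neighbour of $s_2$ if needed), recolor, and finish $s_2$ with the count you give. The parts concerning $v$, $z$, $s_2$ and the final sentence about $s_2$ having a $6$-neighbour are all fine. The problem is exactly the step you flag, and your patch for it does not hold up. You recolor in the order $v,u_3,u_2,u_1$, so when you reach $u_1$ all ten vertices of $N_2(u_1)$ are colored, and you must argue that $\{f(v),f(u_2),f(u_3),f(u_4)\}$ does not exhaust $\{7,8,9,10\}$. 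Your proposed dichotomy --- ``adjacency forces two of them to coincide or one of them into $\{1,\dots,6\}$'' --- fails in its first disjunct: $v,u_2,u_3,u_4$ are pairwise at distance at most $2$ (all through $v$), so their colors are necessarily pairwise \emph{distinct}; no coincidence is possible. The entire burden therefore falls on the second disjunct, which you leave as ``one checks'' without an argument, and it is not automatic: nothing in Lemma~\ref{lem:4-vertex-color-class} prevents a greedy recoloring from pushing $f(v),f(u_2),f(u_3)$ all into $\{7,8,9,10\}$. So as written there is a genuine gap.

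The gap is easily closed, and the paper does it by reordering: it recolors $v,u_1,u_2,u_3,z$ in succession. When $u_1$ is treated, $u_2$ and $u_3$ are still uncolored, so $u_1$ sees at most $10-2=8$ colored vertices in $N_2(u_1)$ and the difficulty never arises (the subsequent counts for $u_2$ and $u_3$ still work because $s_2$, respectively $s_2$ and $z$, remain uncolored at those moments). If you prefer to keep your order, you must make the second disjunct true by a deliberate choice, e.g.\ color $v$ and $u_3$ avoiding the old color of $s_2$ and then assign that color (which lies in $\{1,\dots,6\}$ and conflicts with nothing in $N_2(u_2)$ once $s_2$ is decolored) to $u_2$; then $u_1$ has at most $9$ distinct forbidden colors. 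Either repair is short, but one of them is needed.
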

\begin{proof}
Assume that $s_2$ is a 2-vertex or a $3(1)$-vertex. By minimality,  $G-v$ has a 2-distance coloring $f$ with $10$ colors, 
and we may assume that $f(N[v_i]-\{u_i\}) \subseteq \{1,2,\ldots,6\}$ for  $i\in [3]$ by Lemma \ref{lem:4-vertex-color-class}-(a)-(b), so $f(s_2) \in \{ 1,2,\ldots,6\}$. Recall that  $v_2$ is a $6$-vertex, and $u_3$ has a $2$-neighbour $z$ and a $5$-neighbour $v_3$.   We decolor $s_2,u_1,u_2,u_3,z$. In particular, if $s_2$ is a $3(1)$-vertex, then we also decolor the $2$-neighbour of $s_2$. We later recolor  $v,u_1,u_2,u_3,z$ in succession, respectively. If $d(s_2)=2$, then  $s_2$ has  at most $8$ forbidden colors, so we recolor it. Otherwise, if $s_2$ is a $3(1)$-vertex, then $s_2$ has  at most $9$ forbidden colors, so we recolor $s_2$ and the  $2$-neighbour of $s_2$ in succession.  This provides a $2$-distance coloring of $G$ with $10$ colors, a contradiction. Additionally, $s_2$ cannot be bad or semi-bad vertex, since it has a $6$-neighbour.
\end{proof}

We now turn our attention to the $6$-faces incident to a bad or a semi-bad $5$-vertex.

\begin{lem}\label{lem:partial-face-5-vertex-deg-5}
Let  $\D = 6$, and let $F_1 = \{f_1,f_2,f_3,f_4,f_5\}$ be the face set incident to a bad
$5$-vertex in Figure \ref{fig:Fbad5-a}, $F_2 = \{f_1,f_2,f_3\}$ be the partial face set incident to a semi-bad $5$-vertex in Figure \ref{fig:Fbad5-b}. If $f_i \in F_1 \ or \ F_2$, $f_i = v_iu_ivu_{i+1}v_{i+1}s_i$ and $\min\{d(v_i),d(v_{i+1})\}=5$, then $s_i$ must be a $3^+$-vertex, and it is not $3(1)$-vertex. In particular, $s_i$ is not bad or semi-bad vertex.
\end{lem}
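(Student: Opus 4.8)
The plan is to mimic the discharging‑style reducibility arguments already used for Lemmas \ref{lem:partial-face-4-vertex-deg-2} and \ref{lem:partial-face-4-vertex-deg-3(1)}, but now working from the color‑class information supplied by Lemma \ref{lem:5-vertex-color-class} instead of Lemma \ref{lem:4-vertex-color-class}. First I would suppose for contradiction that $s_i$ is a $2$-vertex or a $3(1)$-vertex. By minimality $G-v$ admits a $2$-distance coloring $f$ with $\D+4=10$ colors, and by Lemma \ref{lem:5-vertex-color-class} (the appropriate one of (a)--(c) depending on whether $v$ is bad or semi‑bad and on the degree of its distinguished neighbour) we may assume the $f(v_j)$'s are pairwise distinct, say $f(v_j)=j$ for $j\in[5]$, and that $\{1,2,\ldots,5\}\subseteq f(N[v_j]-\{u_j\})$ for the relevant indices $j$ (with the $f(y)$ correction on the semi‑bad branch). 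In particular $f(s_i)\in\{1,\ldots,5\}$, since $s_i\in N[v_i]-\{u_i\}$ or $s_i\in N[v_{i+1}]-\{u_{i+1}\}$.

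**Key steps.** I would then decolor the vertices $s_i$, $u_1,\ldots,u_5$ (and, if $v$ is semi‑bad, the $2$-vertex $z$ attached to $u_5$), and, if $s_i$ is a $3(1)$-vertex, also decolor the $2$-neighbour of $s_i$. The point of the hypothesis $\min\{d(v_i),d(v_{i+1})\}=5$ is that one of $v_i,v_{i+1}$ has degree $5$ and so its incident $2$-vertex (say $u_i$) currently sees at most $9$ forbidden colors, leaving room to recolor it last in the chain. I would recolor $v$ first — it has at most $9$ forbidden colors because two colors among $f(v_1),\ldots,f(v_5)$ coincide after the identification forced by recoloring some $u_j$ with a duplicate value, or more simply because $D(v)\le 10$ and one slot is freed — then recolor $u_1,\ldots,u_5$ (and $z$) in succession, using at each step that a $2$-vertex adjacent to a $5$- or $6$-vertex has at most $9$ forbidden colors, exactly as in the proofs of Lemmas \ref{lem:5-vertex-color-class} and \ref{lem:bad-5-vertices-not-adjacent}. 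Finally I would color $s_i$: if $d(s_i)=2$ it has at most $7$ or $8$ forbidden colors (its two closed neighborhoods $N[v_i]-\{u_i\}$ and $N[v_{i+1}]-\{u_{i+1}\}$ together use at most the five colors $\{1,\ldots,5\}$ minus $f(s_i)$, plus the handful of new colors on the $u_j$'s), and if $s_i$ is a $3(1)$-vertex it has at most $9$ forbidden colors, so it can be colored and then its pendant $2$-vertex colored by Corollary \ref{cor:2-vertex}. This yields a $2$-distance $10$-coloring of $G$, a contradiction. The last sentence of the statement ($s_i$ is not bad or semi‑bad) is immediate: $s_i$ is adjacent to the two $5^+$-vertices $v_i,v_{i+1}$, one of which — indeed both in most subcases — is not a $2$-vertex, so $s_i$ cannot have the weak‑adjacency/star‑adjacency profile required of a bad or semi‑bad vertex.

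**Main obstacle.** The delicate point is bookkeeping the forbidden‑color count for $s_i$ after the recoloring cascade: one must check that recoloring $v$ and the $u_j$'s (and $z$) does not introduce more than the allowed number of new forbidden colors into $N_2(s_i)$, i.e. that the colors reused on $u_j\in N_2(s_i)$ are drawn from among the few already accounted for, or that there are simply enough free colors. The hypothesis $\min\{d(v_i),d(v_{i+1})\}=5$ is exactly what makes this count work — it guarantees a $5$-vertex on the face, hence a $2$-vertex with only $9$ (not $\D+2=8$, which would already be fine, but in the semi‑bad $v_5$ case the count is tighter) forbidden colors, so the chain of recolorings can be completed without ever getting stuck. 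I would organize the write‑up by first doing the bad‑$5$‑vertex case of Figure \ref{fig:Fbad5-a} in full, then noting that the semi‑bad case of Figure \ref{fig:Fbad5-b} follows by additionally decoloring and (at the end) recoloring $z$ via Corollary \ref{cor:2-vertex}, exactly as the transition from the bad to the semi‑bad subcase was handled in Lemma \ref{lem:bad-5-vertices-not-adjacent}.
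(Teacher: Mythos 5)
Your overall strategy matches the paper's: assume $s_i$ is a $2$-vertex or $3(1)$-vertex, invoke Lemma \ref{lem:5-vertex-color-class} to force $f(s_i)\in\{1,\ldots,5\}$, decolor $s_i$ and the $u_j$'s (plus pendant $2$-vertices), recolor in a chain, and finish $s_i$ and its pendant via Corollary \ref{cor:2-vertex}. But your recoloring order contains a genuine gap. You propose to recolor $v$ first and then $u_1,\ldots,u_5$ in succession, ``using at each step that a $2$-vertex adjacent to a $5$- or $6$-vertex has at most $9$ forbidden colors.'' That count is false once $v$ is colored: for the fourth $2$-vertex you treat, say $u_5$ with $d(v_5)=6$, the set $N_2(u_5)$ consists of $v$, $v_5$, the four other $u_j$'s, and the five other neighbours of $v_5$ --- eleven vertices, of which only the one $u_j$ you saved for last is uncolored, leaving up to $10$ potentially distinct forbidden colors among $10$ available. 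The chain can get stuck. The paper avoids this by recoloring $u_2,u_3,u_4,u_5$ \emph{while $v$ is still uncolored} (so each of these sees at most $9$ forbidden colors), then $v$, then $u_1$ last --- where the bound of $9$ uses both $d(v_1)=5$ and the fact that $s_1$ is still uncolored --- and only then $s_1$. The ordering is not cosmetic; it is what makes every forbidden-color count close.

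A secondary imprecision: you justify $f(s_i)\in\{1,\ldots,5\}$ by ``$s_i\in N[v_i]-\{u_i\}$'' together with $\{1,\ldots,5\}\subseteq f(N[v_i]-\{u_i\})$, but that containment alone says nothing about $f(s_i)$. You need the hypothesis $\min\{d(v_i),d(v_{i+1})\}=5$ here: if $d(v_i)=5$ then $N[v_i]-\{u_i\}$ has exactly five vertices, so the containment is an equality and $f(s_i)\in\{1,\ldots,5\}$ follows. You cite the degree-$5$ hypothesis only for the ``recolor $u_i$ last'' step; in the paper it does double duty, and the step pinning down $f(s_i)$ is the one that makes the final count of at most $8$ forbidden colors for a $2$-vertex $s_i$ work. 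Your closing argument that $s_i$ cannot be bad or semi-bad (two $5^+$-neighbours) is correct and matches the paper.
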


\begin{figure}[htb]
\centering   
\subfigure[]{
\label{fig:Fbad5-a}
\begin{tikzpicture}[scale=1]
\node [nodr] at (0,0) (v) [label=above:{\scriptsize $v$}] {};
\node [nod2] at (-1,0) (u1) [label=above:{\scriptsize $u_1$}] {}
	edge (v);
\node [nod2] at (-2,0) (v1) [label=left:{\scriptsize $v_1$}] {}
	edge  (u1);
\node [nod2] at (-1,1) (u2) [label=above:{\scriptsize $u_2$}] {}
	edge [] (v);		
\node [nod2] at (-1.7,1.7) (v2) [label=above:{\scriptsize $v_2$}] {}
	edge [] (u2);
\node [nod2] at (1,1) (u3)  [label=above:{\scriptsize $u_3$}] {}
	edge [] (v);
\node [nod2] at (1.7,1.7) (v3) [label=above:{\scriptsize $v_3$}]  {}
	edge [] (u3);
\node [nod2] at (1,0) (u4) [label=above:{\scriptsize $u_4$}] {}
	edge [] (v);
\node [nod2] at (2,0) (v4) [label=right:{\scriptsize $v_4$}] {}
	edge [] (u4);	
\node [nod2] at (0,-.8) (u5) [label=right:{\scriptsize $u_5$}] {}
	edge [] (v);
\node [nod2] at (0,-1.6) (v5) [label=below:{\scriptsize $v_5$}] {}
	edge [] (u5);		
\node at (1,-2.55) (asd)     {};

\node [nod2] at (-2.5,1) (s1)  [label=left:{\scriptsize $s_1$}] {}
	edge [] (v1)
	edge [] (v2);	
\node [nod2] at (0,2.5) (s2)  [label=above:{\scriptsize $s_2$}] {}
	edge [] (v2)
	edge [] (v3);
\node [nod2] at (2.5,1) (s2)  [label=right:{\scriptsize $s_3$}] {}
	edge [] (v3)
	edge [] (v4);
\node [nod2] at (1.5,-1.6) (s2)  [label=below:{\scriptsize $s_4$}] {}
	edge [] (v4)
	edge [] (v5);
\node [nod2] at (-1.5,-1.6) (s2)  [label=below:{\scriptsize $s_5$}] {}
	edge [] (v5)
	edge [] (v1);			
\node at (-1.8,.8) (v11)  {{\scriptsize $f_1$}};	
\node at (0,1.5) (v22)  {{\scriptsize $f_2$}};
\node at (1.8,.8) (v22)  {{\scriptsize $f_3$}};	
\node at (1,-.8) (v22)  {{\scriptsize $f_4$}};
\node at (-1,-.8) (v22)  {{\scriptsize $f_5$}};
				
\end{tikzpicture}  }\hspace*{1cm}
\subfigure[]{
\label{fig:Fbad5-b}
\begin{tikzpicture}[scale=1]
\node [nodr] at (0,0) (v) [label=above:{\scriptsize $v$}] {};
\node [nod2] at (-1,0) (u1) [label=below:{\scriptsize $u_1$}] {}
	edge (v);
\node [nod2] at (-2,0) (v1) [label=below:{\scriptsize $v_1$}] {}
	edge  (u1);
\node [nod2] at (-1,1) (u2) [label=above:{\scriptsize $u_2$}] {}
	edge [] (v);		
\node [nod2] at (-1.7,1.7) (v2) [label=above:{\scriptsize $v_2$}] {}
	edge [] (u2);
\node [nod2] at (1,1) (u3)  [label=above:{\scriptsize $u_3$}] {}
	edge [] (v);
\node [nod2] at (1.7,1.7) (v3) [label=above:{\scriptsize $v_3$}]  {}
	edge [] (u3);
\node [nod2] at (1,0) (u4) [label=below:{\scriptsize $u_4$}] {}
	edge [] (v);
\node [nod2] at (2,0) (v4) [label=below:{\scriptsize $v_4$}] {}
	edge [] (u4);	
\node [nod2] at (0,-.8) (u5) [label=right:{\scriptsize $u_5$}] {}
	edge [] (v);
\node [nod2] at (-1,-1.6) (z) [label=left:{\scriptsize $z$}] {}
	edge [] (u5);
\node [nod2] at (-1,-2.4) (y) [label=left:{\scriptsize $y$}] {}
	edge [] (z);
\node [nod2] at (1,-1.6) (v5) [label=left:{\scriptsize $v_5$}] {}
	edge [] (u5);	
\node [nod2] at (2,-.8) (v51) {}
	edge [dotted] (v5);	
\node [nod2] at (2,-1.4) (v52) {}
	edge [] (v5);
\node [nod2] at (2,-2) (v53) {}
	edge [] (v5);
\node [nod2] at (2,-2.6) (v54) {}
	edge [] (v5);											
\node [nod2] at (-2.5,1) (s1)  [label=left:{\scriptsize $s_1$}] {}
	edge [] (v1)
	edge [] (v2);	
\node [nod2] at (0,2.5) (s2)  [label=above:{\scriptsize $s_2$}] {}
	edge [] (v2)
	edge [] (v3);
\node [nod2] at (2.5,1) (s2)  [label=right:{\scriptsize $s_3$}] {}
	edge [] (v3)
	edge [] (v4);		
\node at (-1.8,.8) (v11)  {{\scriptsize $f_1$}};	
\node at (0,1.5) (v22)  {{\scriptsize $f_2$}};
\node at (1.8,.8) (v22)  {{\scriptsize $f_3$}};	
\end{tikzpicture}  }
\caption{Partial faces incident to a bad $5$-vertex $(a)$ and a semi-bad $5$-vertex $(b)$.}
\label{fig:Fbad5}
\end{figure}
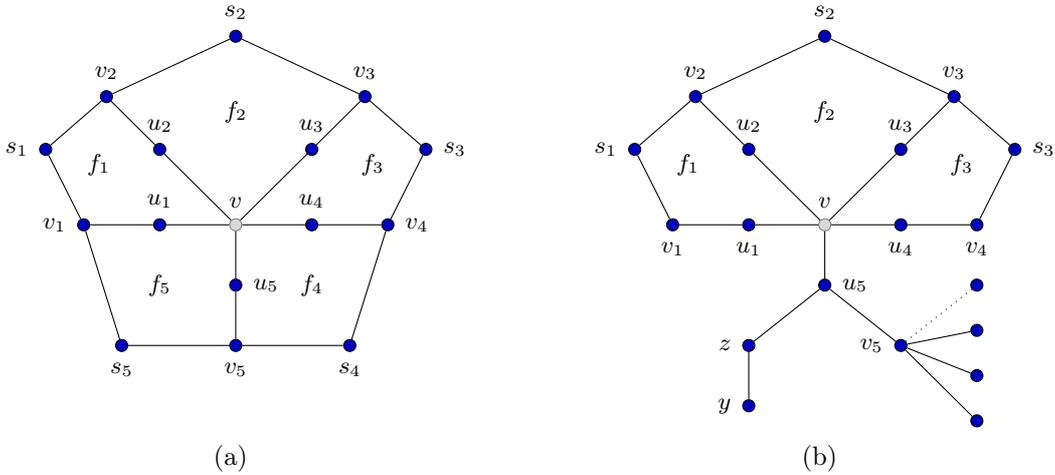

\begin{proof}
Assume without loss of generality that $f_i = f_1= v_1u_1vu_2v_2s_1$. By contradiction, assume that $s_1$ is a $2$-vertex or a $3(1)$-vertex. Note that  $G-v$ has a 2-distance coloring $f$ with $10$ colors by minimality, and so we may assume that $\{1,2,\ldots,5\} \subseteq f(N[v_i]-\{u_i\}) $ for  $i\in [2]$ by Lemma \ref{lem:5-vertex-color-class}.    Without loss of generality, we may assume that $\min\{d(v_1),d(v_2)\}=d(v_1)=5$. Note that $f(s_1)\in \{ 1,2,\ldots,5\}$, since $d(v_1)=5$ and  $\{1,2,\ldots,5\} \subseteq f(N[v_1]-\{u_1\}) $. We decolor $s_1,u_1,u_2,\ldots,u_5$. In particular, if $s_1$ (resp. $u_5$) is a $3(1)$-vertex, then we also decolor the $2$-neighbour of $s_1$ (resp. $u_5$). Now, we recolor   $u_2,u_3,u_4,u_5,v$ in succession, respectively. 
Next, we recolor $u_1$ with an available color, since it has at most $9$ forbidden colors. Finally, consider the vertex $s_1$, if $d(s_1)=2$, then  $s_1$ has  at most $8$ forbidden colors, so we recolor it. Otherwise, if $s_1$ is a $3(1)$-vertex, then $s_1$ has  at most $9$ forbidden colors, so we recolor $s_1$ and the 2-neighbour of $s_1$ in succession. Finally, if $u_5$ is a $3(1)$-vertex, then we recolor its $2$-neighbour with an available color by Corollary \ref{cor:2-vertex}.  This provides a 2-distance coloring of $G$ with $10$ colors, a contradiction. In particular, the vertex $s_1$ has two $5^+$-neighbour, so it cannot be bad or semi-bad vertex by the definition.
\end{proof}

If a vertex is $2$-vertex or $3(1)$-vertex, then we call it as a  \emph{poor vertex}.  A $5$-vertex is said to be \emph{senior} if it has at most $4$ poor neighbours.\medskip

\begin{lem}\label{lem:partial-face-5-vertex-no-poor}
Let  $\D = 6$, and let $\{f_i,f_{i+1}\}$ be the partial face set incident to a bad or semi-bad
5-vertex $v$ in Figure \ref{fig:Fbad5}. Then one of  $s_i,s_{i+1}$ is not poor vertex. 
\end{lem}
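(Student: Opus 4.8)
The plan is to dispose of the easy degree cases with Lemma \ref{lem:partial-face-5-vertex-deg-5} and then to extend a colouring of $G-v$ to $G$ in the one remaining configuration. To begin, $v_i,v_{i+1},v_{i+2}$ are $5^+$-vertices by the definition of a bad or semi-bad $5$-vertex; if one of them has degree $5$, then $\min\{d(v_i),d(v_{i+1})\}=5$ or $\min\{d(v_{i+1}),d(v_{i+2})\}=5$, so Lemma \ref{lem:partial-face-5-vertex-deg-5}, applied to $f_i$ or to $f_{i+1}$, gives that $s_i$ or $s_{i+1}$ is a $3^+$-vertex which is not a $3(1)$-vertex, hence not poor, and we are done. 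So assume $d(v_i)=d(v_{i+1})=d(v_{i+2})=6$, and suppose for contradiction that $s_i$ and $s_{i+1}$ are both poor; then $s_i\neq s_{i+1}$, since a vertex adjacent to the three distinct $6$-vertices $v_i,v_{i+1},v_{i+2}$ can be neither a $2$-vertex nor a $3(1)$-vertex.

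By minimality $G-v$ has a $2$-distance colouring $f$ with $\D+4=10$ colours, and by Lemma \ref{lem:5-vertex-color-class} we may assume $\{1,2,\ldots,5\}\subseteq f(N[v_j]-\{u_j\})$ for $j\in\{i,i+1,i+2\}$ (and, in the cases where the $f(v_k)$ are pairwise distinct — the only ones that need work — that $\{f(v_1),\ldots,f(v_5)\}=\{1,2,\ldots,5\}$; if two $f(v_k)$ coincide, $v$ already sees at most nine colours). Decolour $u_1,\ldots,u_5$, the vertices $s_i,s_{i+1}$, the $2$-neighbours of $s_i$ and of $s_{i+1}$ whenever these are $3(1)$-vertices, and (if $v$ is semi-bad) the $2$-neighbour $z$ of $u_5$. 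Write $N(v_{i+1})=\{u_{i+1},s_i,s_{i+1},w_1,w_2,w_3\}$. Since $N[v_{i+1}]-\{u_{i+1}\}$ has six members, two of which ($s_i$ and $s_{i+1}$) are now decoloured, $f(\{v_{i+1},w_1,w_2,w_3\})$ omits some colour $\alpha\in\{1,2,\ldots,5\}$. Recolour $u_{i+1}$ with $\alpha$: this is legitimate, because $\alpha$ differs from $f(v_{i+1}),f(w_1),f(w_2),f(w_3)$ while the remaining neighbours and second neighbours of $u_{i+1}$ are all decoloured; and since $\alpha\in\{1,2,\ldots,5\}=\{f(v_1),\ldots,f(v_5)\}$, this forces the repetition $f(u_{i+1})=f(v_k)$ for some $k$.

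Next, recolour the remaining $2$-vertices $u_j$ ($j\neq i+1$) one at a time: the forbidden colours of $u_j$ lie inside $f(N[v_j]-\{u_j\})$ (at most six colours, one of them $\alpha$) together with the colours already placed on the other $u_k$'s, and a short count shows that no $u_j$ exceeds nine forbidden colours; moreover, when recolouring $u_i$ and $u_{i+2}$ we take, if possible, a colour that also occurs in $N(v_{i+1})$. Then $v$ has at most nine forbidden colours, by the coincidence $f(u_{i+1})=f(v_k)$. It remains to colour $s_i$ and $s_{i+1}$, and then the pendant $2$-vertices and $z$ (by Corollary \ref{cor:2-vertex}). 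The crucial observation is that $s_i$ is adjacent only to the two $6$-vertices $v_i,v_{i+1}$ (and, if it is $3(1)$, to a $2$-vertex), so its forbidden colours come from $N[v_i]-\{u_i\}$ and $N[v_{i+1}]-\{u_{i+1}\}$ (after the decolourings), plus at most one colour from the pendant direction; but each of these two sets still contains at least four colours of $\{1,2,\ldots,5\}$ — on the $v_{i+1}$-side counting $\alpha$, which now sits on $u_{i+1}\in N(v_{i+1})$ — so they overlap in at least three such colours, and $s_i$ ends up with at most nine forbidden colours. The analogous count, invoking the repetition at $u_i$ or $u_{i+2}$ when necessary, handles $s_{i+1}$. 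This extends $f$ to a $2$-distance colouring of $G$ with $10$ colours, a contradiction.

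The delicate step, and the one I expect to demand the most care, is this last one: $s_i$ and $s_{i+1}$ each lie on two $6$-vertices and, when $3(1)$, carry an extra forbidden colour from their pendant $2$-vertex, so a crude count allows ten or more forbidden colours. The way out is to colour $s_i$ and $s_{i+1}$ sequentially — so that one is still decoloured while the other is being coloured — and to spend the freedom in recolouring $u_i$ and $u_{i+2}$ on producing one further colour repetition near $v_{i+1}$; and if no such repetition is available, the two neighbourhoods in question must then overlap in still more colours, so the bound closes either way.
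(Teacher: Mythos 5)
Your overall strategy matches the paper's: use Lemma \ref{lem:partial-face-5-vertex-deg-5} to reduce to the case where the relevant $v_j$ are $6$-vertices, take the colouring of $G-v$ supplied by Lemma \ref{lem:5-vertex-color-class}, decolour the $u_j$'s, $s_i$, $s_{i+1}$ and the pendant $2$-vertices, and recolour everything. Your schedule differs in one respect: you colour the shared $2$-vertex $u_{i+1}$ first with a colour $\alpha\in\{1,\ldots,5\}$ avoiding $f(\{v_{i+1},w_1,w_2,w_3\})$, so as to force a repetition among the colours seen by $v$, whereas the paper instead colours $v$ \emph{before} the shared $2$-vertex (order $u_5,u_4,u_3,u_1,v,u_2$ in its normalization $i=1$), so that $v$ sees only four coloured $u_k$'s and needs no forced coincidence. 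Your $\alpha$-device is sound as far as it goes: it is legitimate to place $\alpha$ on $u_{i+1}$ since all other vertices within distance two of $u_{i+1}$ are decoloured at that moment, and it does keep each subsequent $u_j$ at nine or fewer forbidden colours because $\alpha\in\{1,\ldots,5\}\subseteq f(N[v_j]-\{u_j\})$.

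The genuine gap is exactly where you flagged it, and it is not closed. For the \emph{second} of $s_i,s_{i+1}$ that you colour (say $s_{i+1}$, with $s_i$ already coloured), the forbidden set is
$f(N[v_{i+1}]-\{s_{i+1}\})\cup f(N[v_{i+2}]-\{s_{i+1}\})$ plus one pendant colour when $s_{i+1}$ is a $3(1)$-vertex. Counting colours inside and outside $\{1,\ldots,5\}$: all five of $\{1,\ldots,5\}$ may occur, plus up to one extra colour from $\{v_{i+1},w_1,w_2,w_3\}$, plus $f(s_i)$, plus up to one extra from $N[v_{i+2}]-\{u_{i+2},s_{i+1}\}$, plus $f(u_{i+2})$, plus the pendant colour --- ten in total. (The colour $\alpha$ on $u_{i+1}$ buys nothing here, since it already lies in $\{1,\ldots,5\}$.) Your two proposed remedies are both unproved assertions: you cannot in general choose $f(u_{i+2})$ (or $f(u_i)$) to coincide with a colour near $v_{i+1}$, because the only guarantee you have is that $u_{i+2}$ has \emph{some} available colour, with no control over where it lands; and the fallback claim that ``the two neighbourhoods must then overlap in still more colours'' does not follow from anything you have established --- the overlap bound $|X'\cap Y'|\geq 3$ coming from $\{1,\ldots,5\}$ is already consumed in reaching ten, and nothing forces a fourth common colour. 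So the proof is incomplete precisely in the configuration where the second $s$-vertex is a $3(1)$-vertex. (For the first $s$-vertex your count does close at nine, because $s_{i+1}$ is still decoloured and $u_{i+1}$ carries $\alpha\in\{1,\ldots,5\}$.) The paper's proof keeps both $s_1$ and $s_2$ decoloured until the very end and recolours the shared $2$-vertex $u_2$ last among the $u_k$'s, which is the structural choice you would need to import (or replace by a genuinely justified coincidence) to finish your argument.
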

\begin{proof}
Without loss of generality, let $i=1$. Suppose by contradiction that both $s_1$ and $s_2$ are poor vertices. By minimality,  $G-v$ has a 2-distance coloring $f$ with $10$ colors, and we may assume that $ \{1,2,\ldots,5\}\subseteq f(N[v_i]-\{u_i\}) $ for  $i\in [3]$ by Lemma \ref{lem:5-vertex-color-class}.  Notice first that if $v_1$ or $v_2$ is a $5$-vertex, then $s_1$ cannot be a poor vertex by Lemma \ref{lem:partial-face-5-vertex-deg-5}. We therefore assume that  $\min\{d(v_1),d(v_2)\}=6$. We decolor $s_1,s_2,u_1,u_2,\ldots,u_5$.  In particular, if $s_1$ (resp. $s_2$, $u_5$) is a $3(1)$-vertex, then we also decolor the $2$-neighbour of $s_1$ (resp. $s_2$, $u_5$).
We now recolor $u_5,u_4,u_3,u_1,v$ in succession, respectively. At last, we recolor $u_2$ with an available color, since it has at most $9$ forbidden colors. Finally, consider the vertex $s_i$ for each $i=2,3$, if $d(s_i)=2$, then  $s_i$ has  at most $8$ forbidden colors, so we recolor it. Otherwise, if $s_i$ is a $3(1)$-vertex, then $s_i$ has  at most $9$ forbidden colors, so we recolor $s_i$ and the 2-neighbour of $s_i$ in succession. On the other hand, if $u_5$ is a $3(1)$-vertex, we recolor its $2$-neighbour with an available color by Corollary \ref{cor:2-vertex}.  This provides a 2-distance coloring of $G$ with $10$ colors, a contradiction.
\end{proof}

As a consequence of Lemma \ref{lem:partial-face-5-vertex-no-poor} together with Lemma \ref{lem:partial-face-5-vertex-deg-5}, we conclude the following.

\begin{cor}\label{cor:delta6-no-consecutive}
Let  $\D = 6$, and let $F = \{f_1,f_2,f_3,f_4,f_5\}$ be the partial face set incident to a bad or a semi-bad $5$-vertex $v$ in Figure \ref{fig:Fbad5}.  Then the following holds.
\begin{itemize}
\item[$(1)$] No consecutive two vertices of $s_i,s_{i+1}$ can be poor vertices.
\item[$(2)$] If $s_i$ is a poor vertex, then $\min\{d(v_i),d(v_{i+1})\}=6$.
\end{itemize}
\end{cor}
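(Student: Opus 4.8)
The plan is to obtain both assertions as direct logical consequences of Lemmas~\ref{lem:partial-face-5-vertex-no-poor} and~\ref{lem:partial-face-5-vertex-deg-5}, so essentially no new recoloring argument is needed; the work is purely combining the two statements.

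First, part~(1) is merely a reformulation of Lemma~\ref{lem:partial-face-5-vertex-no-poor}. That lemma asserts that, for the pair of faces $\{f_i,f_{i+1}\}$ around the bad or semi-bad $5$-vertex $v$, at least one of the two vertices $s_i$ and $s_{i+1}$ fails to be a poor vertex. Reading this for every index $i$ gives exactly the claim that no two consecutive vertices $s_i,s_{i+1}$ can simultaneously be poor.

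For part~(2), I would argue by contraposition of Lemma~\ref{lem:partial-face-5-vertex-deg-5}. Fix a face $f_i=v_iu_ivu_{i+1}v_{i+1}s_i$ from the face set depicted in Figure~\ref{fig:Fbad5}, and suppose $s_i$ is poor, i.e.\ $s_i$ is a $2$-vertex or a $3(1)$-vertex. Since $v$ is a bad or a semi-bad $5$-vertex, every outer vertex $v_j$ bounding one of these faces is weak-adjacent to $v$, hence a $5^+$-vertex; because $\D=6$, each of $v_i$ and $v_{i+1}$ has degree $5$ or $6$. If $\min\{d(v_i),d(v_{i+1})\}$ were not $6$, it would equal $5$, and then Lemma~\ref{lem:partial-face-5-vertex-deg-5} would force $s_i$ to be a $3^+$-vertex which is not a $3(1)$-vertex; in particular $s_i$ would not be poor, a contradiction. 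Therefore $\min\{d(v_i),d(v_{i+1})\}=6$, as claimed.

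The only point requiring attention is bookkeeping rather than mathematics: one must check that the definition of a \emph{poor} vertex ($2$-vertex or $3(1)$-vertex) is precisely complementary to the conclusion of Lemma~\ref{lem:partial-face-5-vertex-deg-5} (``$3^+$-vertex and not a $3(1)$-vertex''), and that the outer vertices $v_i,v_{i+1}$ of the faces listed in Figure~\ref{fig:Fbad5} are indeed all $5^+$-vertices so that the dichotomy $d(v_j)\in\{5,6\}$ is available. Both facts are immediate from the definitions of bad and semi-bad $5$-vertices together with $\D=6$, so I do not expect any genuine obstacle in this corollary.
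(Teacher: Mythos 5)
Your proposal is correct and matches the paper exactly: the paper itself presents this corollary with no separate argument, stating only that it follows from Lemma~\ref{lem:partial-face-5-vertex-no-poor} (giving part~(1) verbatim) and Lemma~\ref{lem:partial-face-5-vertex-deg-5} (giving part~(2) by contraposition, using that the $v_j$ are $5^+$-vertices and $\D=6$). Your bookkeeping of the complementarity between ``poor'' and the lemma's conclusion is exactly the intended reading.
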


\begin{lem}\label{lem:delta6-f-is-not-inc-to-bad-and-semibad}
Let  $\D = 6$, and let $f_i=v_iu_ivu_{i+1}v_{i+1}s$ be a $6$-face incident to a bad or a semi-bad vertex $v$ such that $u_j$ is a $2$-vertex for $j\in \{1,2\}$ (see Figure \ref{fig:Fbad5}). Then $f_i$ is not incident to a bad or a semi-bad vertex except for $v$.  
\end{lem}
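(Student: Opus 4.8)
The plan is to list the six vertices on the boundary $6$-cycle of $f_i$, namely $v,\,u_i,\,u_{i+1},\,v_i,\,v_{i+1},\,s$, and to rule out each of the five vertices other than $v$ from being bad or semi-bad. Two elementary observations read off directly from the definitions of bad and semi-bad vertices will carry most of the argument: (i) since a bad or semi-bad vertex is a $4$- or $5$-vertex and $u_i,u_{i+1}$ are $2$-vertices, neither $u_i$ nor $u_{i+1}$ is bad or semi-bad; and (ii) no bad or semi-bad vertex has a $6$-neighbour, while every neighbour of a bad or semi-bad $5$-vertex is a poor vertex.

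I would first treat $d(v)=4$. Being bad or semi-bad, $v$ is then a special vertex of type I or II, so Lemma~\ref{lem:delta6-special-not-weak-adj}(1) says $v$ is not weak-adjacent to any $5^-$-vertex; hence the weak-neighbours $v_i$ and $v_{i+1}$ of $v$ are $6$-vertices and therefore not bad or semi-bad. The vertex $s$ is adjacent along $f_i$ to the $6$-vertex $v_i$, so by observation (ii) it is not bad or semi-bad. This settles the case $d(v)=4$.

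I would then treat $d(v)=5$, so $v$ is a special vertex of type III or IV. By Lemma~\ref{lem:delta6-special-not-weak-adj}(2), both $v_i$ and $v_{i+1}$ are $5^+$-vertices. If they are both $6$-vertices we finish exactly as in the previous paragraph. Otherwise $\min\{d(v_i),d(v_{i+1})\}=5$; since $f_i$ is a $6$-face incident to the bad or semi-bad $5$-vertex $v$ carrying the two $2$-neighbours $u_i,u_{i+1}$ of $v$, it is one of the faces of Figure~\ref{fig:Fbad5}, and Lemma~\ref{lem:partial-face-5-vertex-deg-5} applies: it yields that $s$ is a $3^+$-vertex which is not a $3(1)$-vertex, hence not poor, and in particular not bad or semi-bad. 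It remains to handle $v_i$ and $v_{i+1}$: one of degree $6$ is not bad or semi-bad, and one of degree $5$, say $v_i$, is adjacent to $s$ along $f_i$, so if $v_i$ were a bad or semi-bad $5$-vertex then by observation (ii) its neighbour $s$ would be poor, a contradiction. Hence no vertex of $f_i$ besides $v$ is bad or semi-bad.

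The step I expect to be the real obstacle is the sub-case where $v$ is a bad or semi-bad $5$-vertex weak-adjacent to a $5$-vertex $v_i$: the non-adjacency results (Lemma~\ref{lem:bad-5-vertices-not-adjacent}, Lemma~\ref{lem:bad-vertices-not-adjacent}(c)) only forbid $v$ from being weak-adjacent to a \emph{second} bad or semi-bad $5$-vertex, so they do not directly exclude $v_i$. Routing through the face vertex $s$ via Lemma~\ref{lem:partial-face-5-vertex-deg-5} is what breaks the impasse, since that lemma makes $s$ non-poor while every neighbour of a bad or semi-bad $5$-vertex must be poor. The only bookkeeping point is to check that $f_i$ is genuinely one of the faces listed in Lemma~\ref{lem:partial-face-5-vertex-deg-5} for the bad (resp.\ semi-bad) $5$-vertex $v$, which holds precisely because $u_i$ and $u_{i+1}$ are both $2$-neighbours of $v$.
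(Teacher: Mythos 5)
Your proof is correct and follows essentially the same route as the paper's: both arguments reduce everything to the definitional facts that a bad or semi-bad vertex has no $6$-neighbour and that a bad or semi-bad $5$-vertex has only poor neighbours, and both invoke Lemma~\ref{lem:partial-face-5-vertex-deg-5} to handle the only delicate sub-case, where $v$ is a $5$-vertex weak-adjacent to a $5$-vertex on $f_i$. The paper organizes the $d(v)=5$ case by whether $s$ is poor while you organize it by $\min\{d(v_i),d(v_{i+1})\}$, but these are contrapositive views of the same application of that lemma.
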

\begin{proof}
We may assume without loss of generality that $i=1$, and so $f_1=v_1u_1vu_2v_2s$.
By contradiction, assume that $f_1$ is incident to a bad or a semi-bad vertex except for $v$. If $d(v)=4$, then  $v_1$ and $v_2$ are $6$-vertices by definition. We then conclude that none of $v_1,v_2,s$ can be a bad or a semi bad vertex.  We may further suppose that $v$ is a bad or a semi-bad $5$-vertex. Again, $v_1$ and $v_2$ are $5^+$-vertices by definition. If $s$ is a poor vertex, then both $v_1$ and $v_2$ are $6$-vertices by Lemma \ref{lem:partial-face-5-vertex-deg-5}. This implies that none of $v_1,v_2$ can be bad or semi-bad vertex, and so $f_1$ has only one bad or a semi-bad vertex.  We further suppose that $s$ is not poor vertex, i.e., it is either a $3(0)$-vertex or a $4^+$-vertex. By the definition of a bad and a semi-bad vertices, we deduce that none of  $v_1,v_2$ can be a bad or a semi-bad vertex in both cases. Thus, the claim holds.
\end{proof}

\noindent
We call a  path $wuvzt$ as a \emph{crucial path} if it satisfies one of the following.
\begin{itemize}
\item $d(w)=d(t)=\D$, \ $d(u)=d(z)=2$, and $d(v)=4$,
\item $d(w)=d(t)=\D$, \ $d(u)=d(z)=2$, and $d(v)=5$,
\item $d(w)=\D$, $d(u)=2$, $d(v)=4$, $d(z)=5$, and  $d(t)\geq 6$. \medskip
\end{itemize} 

If a crucial path $wuvzt$ lies on the boundary of a face $f$, we call the vertex $v$ as a \emph{$f$-crucial vertex}. It follows from the definition of the crucial path that we can bound the number of those vertices in a face.

\begin{cor}\label{cor:at-most-lf-4}
Any face $f$ with $\ell(f)\geq 6$ has at most $\floor[\Big]{ \frac{\ell(f)}{4}}$ $f$-crucial vertices.
\end{cor}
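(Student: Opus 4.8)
The plan is to assign to each $f$-crucial vertex $v$ the four consecutive edges of $\partial f$ that come from a crucial path $wuvzt$ witnessing that $v$ is $f$-crucial, and then to show that these four-edge ``segments'' are pairwise edge-disjoint. Summing up over all $f$-crucial vertices gives $4\cdot(\text{number of }f\text{-crucial vertices})\le \ell(f)$, which is exactly the claimed bound since the left-hand count is an integer multiple of $4$.

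The only feature of crucial paths that I would use is the following: in any crucial path $p_0p_1p_2p_3p_4$ (read in either direction) one has $d(p_0)\ge 6$ and $d(p_4)\ge 6$, whereas $d(p_1),d(p_2),d(p_3)\le 5$. This is immediate from the three cases of the definition together with $\Delta\ge 6$: the degree sequence is $(\Delta,2,4,2,\Delta)$, $(\Delta,2,5,2,\Delta)$, or $(\Delta,2,4,5,\ge 6)$. I would also note that, because $g(G)\ge 6$, the five vertices of any crucial path are pairwise distinct, so it really contributes four distinct edges of $G$, occurring consecutively along the boundary walk of $f$ when the path lies on $\partial f$.

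For the edge-disjointness, suppose $v\ne v'$ are $f$-crucial with witnessing crucial paths $P$ and $P'$ on $\partial f$ that share an edge. Parametrize the boundary walk so that $P=(x_m,x_{m+1},x_{m+2},x_{m+3},x_{m+4})$ with $v=x_{m+2}$; then $P'=(x_{m+\delta},\dots,x_{m+\delta+4})$ with $v'=x_{m+\delta+2}$ for some offset $\delta$ (indices modulo $\ell(f)$). The case $\delta=0$ forces $P=P'$ and hence $v=v'$; if $P$ and $P'$ share an edge with $\delta\notin\{1,2,3\}$ then the offset from $P'$ to $P$ lies in $\{1,2,3\}$, so after possibly interchanging the two paths we may assume $\delta\in\{1,2,3\}$. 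In each of these three cases the vertex $x_{m+4}$ is an endpoint of $P$, hence $d(x_{m+4})\ge 6$, while at the same time $x_{m+4}=x_{m+\delta+(4-\delta)}$ is one of the three interior vertices of $P'$ (its index in $P'$ is $4-\delta\in\{1,2,3\}$), hence $d(x_{m+4})\le 5$ — a contradiction. When $\ell(f)\le 7$ the two four-edge arcs necessarily meet, and the same comparison of degrees at an overlapping position applies. Therefore distinct $f$-crucial vertices yield edge-disjoint four-edge segments of $\partial f$, and since $\partial f$ has $\ell(f)$ edges we get at most $\floor{\ell(f)/4}$ of them.

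The step I expect to require the most care is the edge-disjointness bookkeeping: one must be sure that an edge shared by two segments genuinely forces an endpoint of one crucial path to coincide with an interior vertex of the other, which is where the cyclic indexing along $\partial f$ has to be handled correctly. If one prefers not to assume that $f$ is bounded by a cycle, one should also check that no repeated vertex appears on the relevant stretch of the boundary walk — but this is harmless here, since any such repetition together with $g(G)\ge 6$ would be impossible at the scale of a length-four path.
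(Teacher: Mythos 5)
Your proof is correct: the degree dichotomy you extract from the definition (endpoints of a crucial path have degree $\geq 6$ since $\Delta\geq 6$, interior vertices have degree $\leq 5$) does force the four-edge segments of distinct $f$-crucial vertices to be edge-disjoint along the boundary walk, and the count $4k\leq \ell(f)$ gives the stated bound. The paper offers no written argument for this corollary beyond asserting it follows from the definition, and the packing argument you supply is precisely the intended one, so your proposal matches the paper's approach while making it explicit.
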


In the rest of the paper, we will apply discharging to show that $G$ does not exist. We assign to each vertex $v$ a charge $\mu(v)=2d(v)-6$ and to each face $f$ a charge $\mu(f)=\ell(f)-6$. By Euler's formula, we have

$$\sum_{v\in V}(2d(v)-6)+\sum_{f\in F}(\ell(f)-6)=-12$$

We next present some rules and redistribute accordingly. Once the discharging finishes, we check the final charge $\mu^*(v)$ and $\mu^*(f)$. If $\mu^*(v)\geq 0$ and $\mu^*(f)\geq 0$, we get a contradiction that no such a counterexample can exist.


\subsection{Discharging Rules} \label{sub:} ~~\medskip

\noindent
\textbf{Case 1:} Let $\Delta\geq 7$. \medskip

We apply the following discharging rules.

\begin{itemize}
\item[\textbf{R1:}] Every $2$-vertex receives $1$  from each neighbour.
\item[\textbf{R2:}] Every $3(1)$-vertex receives $1$  from each $6^+$-neighbour.
\item[\textbf{R3:}] Every $3(1)$-vertex receives $\frac{1}{2}$  from each $5$-neighbour.
\item[\textbf{R4:}] Every $4(3)$-vertex receives $\frac{1}{2}$  from each $5(k)$-neighbour for $0 \leq k \leq 3$.
\item[\textbf{R5:}] Every $6^+$-vertex gives $1$  to each $4^-$-neighbour.
\item[\textbf{R6:}] Every $7^+$-vertex transfers its positive charge (after R5) equally to its incident faces. 
\item[\textbf{R7:}] Every face $f$ transfers its positive charge equally to  its incident $f$-crucial vertices.\medskip
\end{itemize}

\begin{rem}\label{rem:1/7-charge}
By R5, if $v$ is a $7^+$-vertex, then $v$ gives 1  to its each $4^-$-neighbour. It then follows from R6, $v$ gives $\frac{d(v)-6}{d(v)}\geq \frac{1}{7}$  to each incident faces when all neighbour of $v$ are $4^-$-vertices. If one of neighbour of $v$ is a $5^+$-vertex, then $v$ gives at least  $\frac{d(v)-5}{d(v)}\geq \frac{2}{7}$  to each incident face. 
\end{rem}


\noindent
\textbf{Checking} $\mu^*(v), \mu^*(f)\geq 0$, for $v\in V(G), f\in F(G)$\medskip

Clearly $\mu^*(f)\geq 0$ for each $f\in F(G)$, since every face transfers its positive charge equally to its incident $f$-crucial vertices. \medskip

We pick a vertex $v\in V(G)$ with $d(v)=k$. \medskip

\textbf{(1).} Let $k=2$.  Recall that all neighbours of a $2$-vertex are $3^+$-vertices by Proposition \ref{prop:properties}. Then $\mu^*(v)=0$ by R1. \medskip

\textbf{(2).} Let $k=3$. By Corollary \ref{cor:3-vertex}-(1), $v$ has at most one 2-neighbour. If $v$ has no 2-neighbour, then $\mu^*(v)\geq 0$, since it does not give a charge to any vertex. Otherwise, $v$ has either a $6^+$-neighbour or two $5$-neighbours by Corollary \ref{cor:3-vertex}-(3), and so $v$ gets at least $1$  from its neighbours by applying R3 and R5. Therefore, $\mu^*(v)\geq 0$  after $v$ transfers $1$  to its $2$-neighbour by R1.  \medskip  

\textbf{(3).} Let $k=4$. Observe that $\mu^*(v)\geq 0$ if $v$ has no three $2$-neighbours or has a $6^+$-neighbour by R1 and R5 together with Corollary \ref{cor:4-vertex}-(1). It then follows from Corollary \ref{cor:4-vertex}-(1) that we may assume that $v$ has three $2$-neighbours, and all neighbours of $v$ are $5^-$-vertices.  Denote by $u_1,u_2,u_3$  the $2$-neighbours of $v$. Note that the other neighbour of $v$ must be $5$-vertex by Corollary \ref{cor:4-vertex}-(5), say $u_4$. 
Clearly, $v$ is a special vertex of type II. Moreover, $v$ is not weak-adjacent to any $(\D-1)^-$-vertex by Lemma \ref{lem:delta7-not-weak-delta-1}. Therefore, the other neighbour of each $u_i$ for $i\in [3]$ is a $\D$-vertex, except for $v$. Denote by $v_i$ the other neighbour of $u_i$ for $i\in [3]$.
We may assume, without loss of generality, that the neighbours of $v$ ($u_1, u_2, u_3, u_4$) are arranged in clockwise order on the plane. Let $f_1$ be the face incident to $u_1,v,u_2$, and let $f_2$ be the face incident to $u_2,v,u_3$. Notice that $v$ is both $f_1$-crucial vertex and  $f_2$-crucial vertex. Since $v_i$ is a $\D$-vertex with $\D\geq 7$ for $i\in [3]$, it gives $\frac{1}{7}$  to each incident face by R6 and Remark \ref{rem:1/7-charge}. Observe that each $f_j$, for $j\in \{1,2\}$, gets totally at least $2\times \frac{1}{7}$ from $v_j$ and $v_{j+1}$. 
After applying R7, the vertex $v$ gets totally at least $\frac{4}{7}$  from $f_1$ and $f_2$ by Corollary \ref{cor:at-most-lf-4}. 

If $u_4$ has at most three 2-neighbour, then $v$ receives  $\frac{1}{2}$  from $u_4$ by R4. Thus we have $\mu^*(v)\geq 2d(v)-6+\frac{4}{7}+\frac{1}{2}-3>0$ after $v$ transfers $1$  to its each $2$-neighbour by R1.
We further  assume that $u_4$ is a $5(4)$-vertex. In this case, by Proposition \ref{prop:type2-vertex5}, $u_4$ is not weak-adjacent to any $5^-$-vertex. If the face incident to $v_3,u_3,v,u_4$, say $f_3$, is a $6$-face, then $v_3$ gives $\frac{2}{7}$  to its each incident face by R6 together with Proposition \ref{prop:properties} and Remark \ref{rem:1/7-charge}. So, each of $f_2,f_3$ receives at least $\frac{2}{7}$ from $v_3$, and transfers it to $v$ by R7.  Recall that $v$ has already received at least $\frac{4}{7}$  from $f_1$ and $f_2$ as stated above. Consequently, $v$ gets totally at least $1$  from $f_1,f_2,f_3$ by R7 and Corollary \ref{cor:at-most-lf-4}. Otherwise, if the face $f_3$ incident to $v_3,u_3,v,u_4$ is a $7^+$-face, then $\mu(f_3)=\ell(f_3)-6\geq 1$ and $f_3$ has  at most $\floor[\Big]{ \frac{\ell(f)}{4}}$ $f_3$-crucial vertex by Corollary \ref{cor:at-most-lf-4}. So, $v$ gets at least $1$  from $f_3$ by R7. Consequently, $\mu^*(v)> 0$. \medskip

\textbf{(4).} Let $k=5$. Observe that $\mu^*(v)\geq 0$ if $v$ has at most three $2$-neighbours by R1, R3 and R4. If $v$ has four $2$-neighbours and one $3^+$-neighbour different from $3(1)$-vertex, then $\mu^*(v)\geq 0$ by applying R1. So we have two cases; $v$ is either a $5(4)$-vertex having a $3(1)$-neighbour or a $5(5)$-vertex. Let $u_1,u_2,u_3,u_4$ be $2$-neighbours of $v$ in a clockwise ordering on the plane. Let $u_5$ be the last neighbour of $v$, which is either a $2$-vertex or a $3(1)$-vertex. By Lemma \ref{lem:delta7-not-weak-delta-1}, 
$v$ is not weak-adjacent to any $(\D-1)^-$-vertex. Then the other neighbour of each $u_i$ is $\Delta$-vertex.  Denote by $v_i$ the other neighbour of $u_i$ for $i\in [4]$. Since $v_i$ is a $\D$-vertex with $\D\geq 7$, the vertex $v_i$ gives at least $\frac{1}{7}$  to its each incident face by R6 and Remark \ref{rem:1/7-charge}. Denote by $f_i$ the face containing the path $v_iu_ivu_{i+1}v_{i+1}$ for $i\in [3]$. Notice that each of $f_1,f_2,f_3$ gets totally at least $\frac{2}{7}$ from $v_i$'s by R6. It then follows that each of $f_1,f_2,f_3$ gives $\frac{2}{7}$  to $v$ by R7 and Corollary \ref{cor:at-most-lf-4}. So $v$ gets totally at last $\frac{6}{7}$  from $f_1,f_2,f_3$. If $u_5$ is a $3(1)$-vertex, then we have $\mu^*(v)\geq 0$ after $v$ transfers $1$  to its each $2$-neighbour by R1 and $\frac{1}{2}$  to $u_5$ by R3. Otherwise, if  $u_5$ is not a $3(1)$-vertex, then it must be a $2$-vertex, say $N(u_5)=\{v,v_5\}$. Consider the face containing $v_4u_4vu_5v_5$, say $f_4$,  similarly as above, $f_4$ gets totally at least $\frac{2}{7}$  from $v_4,v_5$ by R6. It follows that $f_4$ gives $\frac{2}{7}$  to $v$ by R7 and Corollary \ref{cor:at-most-lf-4}. Consequently, $v$ gets totally at least $\frac{8}{7}$  from $f_1,f_2,f_3,f_4$.  Thus, $\mu^*(v)> 0$  after $v$ transfers $1$  to its each $2$-neighbour by R1. \medskip

\textbf{(5).} Let $k\geq 6 $. By R5, $v$ gives at most $1$  to its each neighbour, and so $\mu^*(v)\geq 0$.\medskip

\noindent
\textbf{Case 2:} Let $\Delta=6$. \medskip

We apply the following discharging rules.

\begin{itemize}
\item[\textbf{R1:}] Every $2$-vertex receives $1$  from each neighbour.
\item[\textbf{R2:}] Every $3(1)$-vertex receives $1$  from each  $6^+$-neighbour and senior $5$-neighbour. 
\item[\textbf{R3:}] If a $3(1)$-vertex has no $6^+$-neighbour or senior $5$-neighbour, then $v$ receives $\frac{1}{2}$  from its each $4^+$-neighbour.
\item[\textbf{R4:}] If a $5$-vertex $v$ is adjacent to at most three poor vertices, then $v$ transfers  its positive charge (after R1,R2,R3) equally to its incident faces. 
\item[\textbf{R5:}] Every $6^+$-vertex gives $1$  to each $2$-neighbour, $3(1)$-neighbour, $4(3)$-neighbour, and $4(2)$-neighbour which is adjacent to a $3(1)$-vertex.
\item[\textbf{R6:}] Every $6^+$-vertex transfers its positive charge (after R5) equally to its incident faces. 
\item[\textbf{R7:}] Every face transfers its positive charge equally to  its incident bad and semi-bad vertices.\medskip
\end{itemize}

\noindent
\textbf{Checking} $\mu^*(v), \mu^*(f)\geq 0$, for $v\in V(G), f\in F(G)$\medskip

Clearly $\mu^*(f)\geq 0$ for each $f\in F(G)$ as every face transfers its positive charge equally to its incident bad and semi-bad vertices by R7. \medskip

We pick a vertex $v\in V(G)$ with $d(v)=k$. \medskip

\textbf{(1).} Let $k=2$. Recall that all neighbours of a $2$-vertex are $3^+$-vertices by Proposition \ref{prop:properties}. Then $\mu^*(v)=0$ by R1. \medskip

\textbf{(2).} Let $k=3$. By Corollary \ref{cor:3-vertex}-(1), $v$ has at most one 2-neighbour. If $v$ has no such neighbour, then $\mu^*(v)\geq 0$, since it does not give a charge to any vertex. Otherwise, suppose that $v$ has a 2-neighbour, we will show that $v$ always gets totally 1  from its neighbours. If $v$ has a $6^+$-neighbour $u$, then  $u$ gives 1  to $v$ by R5. If $v$ has no $6^+$-neighbour, then $v$ has at least one $5$-neighbour by Corollary \ref{cor:3-vertex}-(2), say $w$. Clearly $w$ is a senior vertex, and it follows that $v$ receives 1  from $w$ by R2.
Therefore, we have  $\mu^*(v)\geq 0$ by applying R1. \medskip

\textbf{(3).} Let $k=4$. Observe that if $v$ has no $2$-neighbour, then $\mu^*(v)\geq 0$ by R1 and R3.   Also, if $v$ has only one $2$-neighbour, then $v$ is not adjacent to three $3(1)$-neighbours by Proposition \ref{prop:v-is-not-adj-to-three-3(1)}, and so $\mu^*(v)\geq 0$ by R1 and R3. 
Recall that  all neighbours of $v$ cannot be $2$-vertices by Corollary \ref{cor:4-vertex}-(1). We may then assume that $v$ has either two or three $2$-neighbours, i.e., $2\leq n_2(v)\leq 3$. 
Notice also that if $v$ has a $6^+$-neighbour, then $\mu^*(v)\geq 0$ by R1, R3 and R5. We may therefore assume that $v$ has no $6^+$-neighbour.

\textbf{(3.1).} Let $n_2(v)=3$. Denote by $u_1,u_2,u_3$ the $2$-neighbours of $v$ in a clockwise ordering on the plane. Then the last neighbour of $v$ is  a $4^+$-vertex by Corollary \ref{cor:4-vertex}-(3), say $u_4$. Let $v_i$ be the other neighbour of $u_i$  for $i\in [3]$. By our assumption, $u_4$ is a $4$ or $5$-vertex. This implies that $v$ is a special vertex of type II. It follows from Observation \ref{obs:special-to-bad} that $v$ is a bad $4$-vertex. Then each $v_i$, for $i=1,2,3$,  is a $6$-vertex by Lemma \ref{lem:delta6-special-not-weak-adj}. Let $f_1$ (resp. $f_2$) be the face incident to $v_1,u_1,v,u_2,v_2$ (resp. $v_2,u_2,v,u_3,v_3$). 
Suppose first that both $f_1$ and $f_2$ are $6$-faces. Then the common neighbour of $v_1,v_2$ (resp. $v_2,v_3$) is $3^+$-vertex and it is not $3(1)$-vertex by Lemma \ref{lem:partial-face-4-vertex-deg-2}. It follows from applying R6 that $v_1$ gives at least $\frac{1}{6}$  to $f_1$; $v_2$ gives at least $\frac{1}{3}$  to each $f_1,f_2$; $v_3$ gives at least $\frac{1}{6}$  to $f_2$. Consequently,  $v$ receives totally  $1$  from $f_1,f_2$ by R7 together with Lemma \ref{lem:delta6-f-is-not-inc-to-bad-and-semibad}, and so  $\mu^*(v)\geq 0$ after $v$ transfers $1$  to its each $2$-neighbour by R1. Suppose now that one of $f_1,f_2$ is a $7^+$ face, say $f_1$.  Since $v_1$ and $v_2$ are $6$-vertices, $f_1$ is not incident to any bad or semi-bad vertex except for $v$ when $\ell(f_1)=7$. Thus $f_1$ transfers all its positive charge to $v$ when $\ell(f_1)=7$, and so $v$ receives at least $1$  from $f_1$ by R7. If $\ell(f_1)>7$, then  $\mu(f_1)=\ell(f_1)-6 \geq 2$, and so $f_1$ gives at least $1$  to $v$ by R7 together with Lemma \ref{lem:7-face-lf-3/2}. Consequently, $\mu^*(v)\geq 0$ after $v$ transfers $1$  to its each $2$-neighbour by R1. \medskip

\textbf{(3.2).} Let $n_2(v)=2$. Let $u_1,u_2$ be $2$-neighbours of $v$, and let $v_i$ be the other neighbour of $u_i$ for $i=1,2$. If $v$ has no $3(1)$-neighbour, then $\mu^*(v)\geq 0$ by R1. So we assume that $v$ has at least one $3(1)$-neighbour. In fact, $v$ cannot have more than one $3(1)$-neighbour by Corollary \ref{cor:4-vertex}-(2). Thus, $v$ has a unique $3(1)$-neighbour, say $u_3$.  In addition, the last neighbour of $v$ is a $4^+$-vertex by Corollary \ref{cor:4-vertex}-(2), say $u_4$. By our assumption, $u_4$ is a $4$ or $5$-vertex. Let $N(u_3)=\{v,z,v_3\}$ such that $d(z)=2$ and  $d(v_3)\geq  5$ by Corollary \ref{cor:3-vertex}-(2). Obviously, $v$ is a special vertex of type I, and so both $v_1$ and $v_2$ are $6$-vertices by Lemma \ref{lem:delta6-special-not-weak-adj}. 
Also, we may assume that $v_3$ is neither $6^+$-vertex nor senior $5$-vertex, since otherwise the vertex $u_3$ receives $1$  from $v_3$ by R2 and R5, and so $v$ does not need to send charge to $u_3$ by R3. So, $\mu^*(v)\geq 0$ after $v$ transfers $1$  to its each $2$-neighbour by R1. This forces that $v_3$ is a $5$-vertex with $5$ poor neighbours. In particular, $v$ is a semi-bad $4$-vertex by Observation \ref{obs:special-to-bad}. 

We first assume that $v_1,v_2$ are consecutive vertices on the plane. Let $f_1$ be the face incident to $v_1,u_1,v,u_2,v_2$. If $f_1$ is a $7$-face, then $f_1$ is not incident to any bad or semi-bad vertex except for $v$, since both $v_1$ and $v_2$ are $6$-vertices. It follows that $f_1$ gives $1$  to $v$ by R7. Moreover, if $\ell(f_1)>7$, then  $\mu(f_1)=\ell(f_1)-6 \geq 2$, and so $f_1$ gives at least $1$  to $v$ by R7 together with Lemma \ref{lem:7-face-lf-3/2}. Thus, $\mu^*(v)> 0$ after $v$ transfers $1$  to its each $2$-neighbour by R1. We may further assume that $f_1$ is a $6$-face. Let $f_2$ be the face incident to $v_2,u_2,v,u_3$ and let $f_3$ be the face incident to $v_1,u_1,v,u_4$. 
Notice that $f_1$ is not incident to any bad or semi-bad vertex, except for $v$, by Lemma \ref{lem:delta6-f-is-not-inc-to-bad-and-semibad}. Moreover, each of $f_2,f_3$ has at most $\floor[\Big]{ \frac{\ell(f)}{2}}$ bad and semi-bad vertices by Corollary \ref{cor:at-most-lf/2-bad-and-semi-bad}. In particular, if $\ell(f_i)=6$ for $i\in \{2,3\}$, then $f_i$  has at most two bad or semi-bad vertices by Lemma \ref{lem:6-face-at-most-3-bad-and-semi-bad}.
Once we apply R6, $v_1$ gives $\frac{1}{6}$  to each of $f_1,f_3$; $v_2$ gives $\frac{1}{6}$  to each of $f_1,f_2$. It then follows from applying R7 together with Lemma \ref{lem:delta6-f-is-not-inc-to-bad-and-semibad}, we say that  $f_1$ transfers $\frac{1}{6}+\frac{1}{6}$  to $v$; $f_2$ transfers at least $\frac{1}{12}$  to $v$;  $f_3$ transfers at least $\frac{1}{12}$  to $v$. Consequently, $v$ gets at least $\frac{1}{2}$  from its incident faces. Thus,  we have $\mu^*(v)\geq 0$, after $v$ transfers $\frac{1}{2}$  to $u_3$ by R3, and $1$  to its each $2$-neighbour by R1.   

Now we assume that $v_1,v_2$ are not consecutive vertices on the plane. By symmetry, we may suppose that $v_1,v_3,z,v_2$ are consecutive vertices on the plane.  Let $f_1$ be the face incident to $v_1,u_1,v,u_3,v_3$. Notice that neither $v_1$ nor its neighbours can be bad or semi-bad vertices because of $d(v_1)=6$. Therefore, if $f_1$ is a $7^+$-face, then $f_1$ can have at most $\ceil[\Big]{ \frac{\ell(f)-3}{2}}$ bad or semi-bad vertices by Lemma \ref{lem:bad-vertices-not-adjacent}. So $f_1$ sends at least $\frac{1}{2}$  to $v$ by R7.  Consequently we have $\mu^*(v)\geq 0$, after $v$ transfers $\frac{1}{2}$  to $u_3$ by R3, and $1$  to its each $2$-neighbour by R1.   
We further suppose that $f_1$ is a $6$-face. By Lemma \ref{lem:partial-face-4-vertex-deg-3(1)},  the common neighbour of $v_1,v_3$ is a $3^+$-vertex other than $3(1)$-vertex. Therefore $v_3$ has at most $4$ poor neighbours, however it is a contradiction with the fact that $v_3$ is a $5$-vertex with $5$ poor neighbours. \medskip



\textbf{(4)} Let $k=5$. Observe that if $v$ has at most three $2$-neighbours, then $\mu^*(v)\geq 0$  by applying R1-R4. On the other hand, if $v$ has four $2$-neighbours and one $3^+$-neighbour different from $3(1)$-vertex, then $\mu^*(v)\geq 0$ by applying R1. So we have two cases; $v$ is either a $5(4)$-vertex having a $3(1)$-neighbour or a $5(5)$-vertex, i.e, $v$ is a special vertex of type III or IV. By Lemma \ref{lem:delta6-special-not-weak-adj}-(2),  every vertex which is weak-adjacent to $v$ is a $5^+$-vertex.  Notice also that if $v$ is star-adjacent to a $6^+$-vertex, then the $6^+$-vertex gives $1$  to the $3(1)$-neighbour of $v$ by R5. By this way, the $3(1)$-neighbour of $v$ does not need charge from $v$, and so $\mu^*(v)\geq 0$ after $v$ transfers $1$  to its each $2$-neighbour by R1. Thus we assume that $v$ is  star-adjacent to a $4$-vertex (or $5$-vertex) by Lemma \ref{lem:3-vertex}-(1). Consequently, $v$ is either a bad or a semi-bad $5$-vertex. \medskip

\textbf{(4.1)} Suppose that $v$ is a bad $5$-vertex. Denote by $u_1,u_2,\ldots,u_5$ the $2$-neighbours of $v$ in a clockwise ordering on the plane, and let $v_i$ be the other neighbour of $u_i$ for $i\in [5]$ (see Figure \ref{fig:bad5-a}). Let $f_i$ be the face incident to $v_i,u_i,v,u_{i+1},v_{i+1}$ for each $i\in [5]$ in modulo $5$. Let $F$ be the set of faces incident to $v$, and suppose that $v$ has $t$ \ $7^+$-faces, and $5-t$ \ $6$-faces. Notice that if $f_i\in F$ is a $7^+$-face, then $f_i$ gives at least $\frac{1}{2}$  to $v$ by R7 together with Lemma \ref{lem:7-face-lf-3/2}. Therefore, if $t\geq 2$, then $v$ gets at least $1$  from its incident faces, and so $\mu^*(v)\geq 0$ after $v$ transfers $1$  to its each $2$-neighbour by R1. We further suppose that $t\leq 1$.  Denote by $s_i$ the common neighbour of $v_i$ and $v_{i+1}$ for $i\in [5]$ in modulo $5$ if $f_i$ is a $6$-face. It is clear that none of $s_i$'s can be $4(3)$-vertex or $4(2)$-vertex adjacent to a $3(1)$-vertex. \medskip

\textbf{(4.1.1)} Let $t=0$. Then, all faces incident to $v$ are $6$-faces. Let $f_i=v_iu_ivu_{i+1}v_{i+1}s_i$ for each $i\in [5]$ in modulo $5$ (see Figure \ref{fig:Fbad5-a}).  By Lemma \ref{lem:partial-face-5-vertex-no-poor}, at most two of $s_i$'s can be poor vertices, and they are not in consecutive faces. We first suppose that at most one of $s_i$ is a poor vertex, say $s_5$. Then each of $v_2,v_3,v_4$  has at least $1$ positive charge after applying R1-R5, since each of $v_2,v_3,v_4$ has at least two non-poor vertices. Recall that each $v_i$ is a $5^+$-vertex. Once we apply R4-R6, each $v_i$, for $i=2,3,4$, gives at least $\frac{1}{5}$ to each incident face. Since each $f_i$ has only one bad or semi-bad vertex by Lemma \ref{lem:delta6-f-is-not-inc-to-bad-and-semibad}, $v$ receives $\frac{1}{5}+\frac{2}{5}+\frac{2}{5}+\frac{1}{5}$ from $f_1,f_2,f_3,f_4$, respectively. Then, $v$ gets totally at least $\frac{6}{5}$  from its incident faces. After $v$ transfers $1$  to each $2$-neighbour by R1, we have $\mu^*(v)\geq \frac{1}{5}$. 

We now suppose that two of $s_i$'s are poor vertices, say $s_1,s_3$. It follows from Corollary \ref{cor:delta6-no-consecutive} that  $v_1,v_2,v_3,v_4$ are $6$-vertices. Since $s_2,s_4,s_5$ are not poor vertices,  each $v_i$, for $i\in [5]$, has $1$ positive charge after applying R1-R3.  Once we apply R6, each $v_i$, for $i=1,2,3,4$, gives at least $\frac{1}{6}$  to each incident face, and $v_5$ gives at least $\frac{1}{5}$  to each incident face. Since each $f_i$ has only one special vertex  by Lemma \ref{lem:delta6-f-is-not-inc-to-bad-and-semibad}, $v$ receives $3\times \frac{2}{6}+2\times\frac{11}{30}$ from $f_1,f_2,\ldots,f_5$, respectively. Consequently, we have $\mu^*(v)>0$ after applying R1. \medskip

\textbf{(4.1.2).} Let $t=1$. Then there exist consecutive four $6$-faces, say $f_1,f_2,f_3,f_4$. So $f_5$ is a $7^+$-face, and it gives at least $\frac{1}{2}$  to $v$ by R7 together with Lemma \ref{lem:7-face-lf-3/2}. Let $f_i=v_iu_ivu_{i+1}v_{i+1}s_i$ for each $i\in [4]$.  Remark that each $f_i$ for $i\in [4]$ has only one bad or semi-bad vertex by Lemma \ref{lem:delta6-f-is-not-inc-to-bad-and-semibad}. On the other hand, at most two of $s_i$'s can be poor vertices for $i\in [4]$, and they do not belong to consecutive faces by Corollary \ref{cor:delta6-no-consecutive}-(1). We first suppose that at most one of $s_i$'s is a poor vertex. By symmetry we consider only the cases that $s_3$ or $s_4$ is a poor vertex. If $s_3$ is a poor vertex, then $v_3,v_4$ are $6$-vertices by Corollary \ref{cor:delta6-no-consecutive}-(2). Since $s_1$ and $s_2$ are not poor vertices, $v_2$ has at least $1$ positive charge after applying R1-R3, R5. Also, each of $v_3,v_4$ has at least $1$ positive charge after applying R5.  Once we apply R4 and R6, $v_2$  gives at least $\frac{1}{5}$  to every incident face, and each of $v_3,v_4$  gives at least $\frac{1}{6}$  to every incident face.  Since each $f_i$ for $i\in [4]$ has only one bad or semi-bad vertex, $v$ receives at least $\frac{1}{5}+\frac{11}{30}+\frac{2}{6}+\frac{1}{6}$  from $f_1,f_2,f_3,f_4$ by R7, respectively. Thus, $\mu^*(v)>0$ after $v$ gives $1$  to its each $2$-neighbour by R1. Now we suppose that $s_4$ is a poor vertex. Similarly as above, then $v_4,v_5$ are $6$-vertices, and  each of $v_2,v_3$ has at least $1$ positive charge after applying R1-R3, R5. Also, $v_4$ has at least $1$ positive charge after applying R5. It follows that each of $v_2,v_3$ gives at least $\frac{1}{5}$  to each incident face. Since each $f_i$ for $i\in [4]$ has only one bad or semi-bad vertex, $v$ receives at least $\frac{1}{5}+\frac{2}{5}+\frac{11}{30}+\frac{1}{6}$ from $f_1,f_2,f_3,f_4$, respectively. Consequently, we have $\mu^*(v)>0$.

We now suppose that two of $s_i$'s are poor vertices, say $s_1,s_3$. It follows from Corollary \ref{cor:delta6-no-consecutive} that $v_1,v_2,v_3,v_4$ are $6$-vertices. Since $s_2,s_4$ are not poor vertices, each $v_i$ for $i\in \{2,3,4\}$ has at least $1$ positive charge after applying R5.  
Once we apply R6, each $v_i$, for $i\in \{2,3,4\}$, gives at least $\frac{1}{6}$  to each incident face. Since each $f_i$ for $i\in [4]$ has only one bad or semi-bad vertex, $v$ receives $ \frac{1}{6}+\frac{2}{6}+ \frac{2}{6}+ \frac{1}{6}$ from $f_1,f_2,f_3,f_4$, respectively. Thus, we have $\mu^*(v)\geq 0$. \medskip

\textbf{(4.2).} Suppose now that $v$ is a semi-bad $5$-vertex. Denote by $u_1,u_2,u_3,u_4$ the $2$-neighbours of $v$ in a clockwise ordering on the plane, and and let $v_i$ be the other neighbour of $u_i$ for $i\in [4]$. Let $u_5$ be the $3(1)$-neighbour of $v$ with $N(u_5)=\{v,z,v_5\}$ such that  $d(z)=2$ and $d(v_5)\geq 4$ (see Figure \ref{fig:bad5-b}). By Lemma \ref{lem:delta6-special-not-weak-adj}-(2),  $v$ is not weak-adjacent to any $4^-$-vertex. So each $v_i$, for $i\in [4]$, is a $5^+$-vertex. 
Let $F$ be the set of faces incident to $v$, and let $f_i\in F$ be incident to $u_i,v,u_{i+1}$ for $i\in [3]$. If $\ell(f_i)\geq 7$, then $f_i$ has at most $\ceil[\Big]{ \frac{\ell(f)-3}{2}}$ bad or semi-bad vertices by Lemma \ref{lem:7-face-lf-3/2}. 
This infer that if one of $f_1,f_2,f_3$ is a $7^+$-face, then it gives at least $\frac{1}{2}$  to $v$ by R7, and so we have $\mu^*(v)\geq 0$ after $v$ transfers $1$  to each $2$-neighbour by R1 and $\frac{1}{2}$  to the $3(1)$-neighbour by R3. We further suppose that $f_1,f_2,f_3$ are $6$-faces.   Let $f_i=v_iu_ivu_{i+1}v_{i+1}s_i$ for each $i\in [3]$ (see Figure \ref{fig:Fbad5-b}). By Lemma \ref{lem:delta6-f-is-not-inc-to-bad-and-semibad}, each of $f_1,f_2,f_3$ has only one bad or semi-bad vertex.  Recall also that none of $s_i$'s can be $4(3)$-vertex or $4(2)$-vertex adjacent to a $3(1)$-vertex.  Let us divide the rest of the proof into three cases as follows.

If none of $s_i$, for $i\in [3]$, is a poor vertex, then each of $v_2,v_3$ gives at least $\frac{1}{5}$  to each incident faces by R4-R5. Since each of $f_1,f_2,f_3$ has only one bad or semi-bad vertex, $v$ receives $\frac{1}{5}+\frac{2}{5}+\frac{1}{5}$ from $f_1,f_2,f_3$, respectively. Then, $v$ gets totally at least $\frac{4}{5}$  from its incident faces. After $v$ transfers $1$  to its each $2$-neighbour by R1 and $\frac{1}{2}$  to the $3(1)$-neighbour by R3, we have $\mu^*(v)\geq \frac{3}{10}$. 

Suppose next that only one of $s_1,s_2,s_3$ is a poor vertex. If $s_1$ is a poor vertex, then $v_2$ is a $6$-vertex by Corollary \ref{cor:delta6-no-consecutive}-(2), and so it gives at least $\frac{1}{6}$  to its each incident faces by R6. On the other hand, $v_3$ gives at least $\frac{1}{5}$  to each incident faces by R4.  Since each $f_1,f_2,f_3$ has only one bad or semi-bad vertex, $v$ receives at least $\frac{1}{6}+\frac{11}{30}+\frac{1}{5}$ from $f_1,f_2,f_3$, respectively. Then, $v$ gets totally at least $\frac{11}{15}$  from its incident faces. Thus, we have $\mu^*(v)\geq \frac{7}{30}$ after $v$ transfers $1$  to its each $2$-neighbour by R1 and $\frac{1}{2}$  to the $3(1)$-neighbour by R3.

Finally suppose that two of  $s_1,s_2,s_3$ are  poor vertices. Clearly, those vertices should be $s_1,s_3$ by Corollary \ref{cor:delta6-no-consecutive}-(1). It follows that each $v_i$, for $i\in [4]$, is a $6$-vertex, so each of them gives at least $\frac{1}{6}$  to its each incident faces. Then, $v$ gets totally at least $1$  from $f_1,f_2,f_3$. Similarly as above, we have $\mu^*(v)>0$.\medskip

\textbf{(5).} Let $k\geq 6 $. By R5, $v$ gives $1$  to its some neighbours, and afterwards it transfers its positive charge equally to its incident faces by R6.  Consequently, $\mu^*(v)\geq 0$.

%
%
%
%
%
%
%
%


\end{document}